\title[]
{Unstable loci in flag varieties\\ and\\ variation of quotients}
\author{Henrik Sepp\"anen, Valdemar V. Tsanov} 
\thanks{V.V.T. is supported by the DFG grant Sachbeihilfe DFG-AZ: TS 352/1-1.}
\keywords{Geometric Invariant Theory, flag variety, Littlewood-Richardson cone, Cox ring, Mori dream space}
\subjclass[2010]{14L24,14C20,17B10}
\date{\today}
\newcommand{\C}{\mathbb{C}}
\newcommand{\R}{\mathbb{R}}
\newcommand{\N}{\mathbb{N}}
\newcommand{\Z}{\mathbb{Z}}
\newcommand{\Q}{\mathbb{Q}}
\newcommand{\mP}{\mathbb{P}}
\def\Lw  {\Longrightarrow}
\def\mc {\mathcal}
\def\mk {\mathfrak}
\def\ol  {\overline}
\def\tst {\Longleftrightarrow}
\def\wt  {\widetilde}
\newtheorem{prop}{Proposition}[section]
\newtheorem{lemma}[prop]{Lemma}
\newtheorem{cor}[prop]{Corollary}
\newtheorem{thm}[prop]{Theorem}
\theoremstyle{definition}
\newtheorem{defin}[prop]{Definition}
\newtheorem{rem}[prop]{Remark}
\newtheorem{example}{Example}
\begin{document}

\maketitle

\vspace{-0.5cm}

\begin{abstract}
We consider the action of a semisimple subgroup $\hat G$ of a semisimple complex group $G$
on the flag variety $X=G/B$, and the linearizations of this action by line bundles $\mc L$ on $X$. 
The main result is an explicit description of the associated {\it unstable locus} in dependence of $\mc L$,
as well as a combinatorial formula for its (co)dimension. We observe that the codimension is equal to 1 on the regular boundary of the $\hat G$-ample cone, 
and grows towards the interior in steps by 1, in a way that the line bundles with unstable locus of codimension $q$ form a convex polyhedral cone. We also give a recursive algorithm for determining all GIT-classes in the $\hat G$-ample cone of $X$.

As an application, we give conditions ensuring the existence of GIT-classes $C$ with
an unstable locus of codimension at least two and which moreover yield geometric GIT quotients.
Such quotients $Y_C$ reflect global information on $\hat G$-invariants. They are always Mori dream spaces, and the Mori
chambers of the pseudoeffective cone $\overline{{\rm Eff}}(Y_C)$ correspond to
the GIT-chambers of the $\hat G$-ample cone of $X$. Moreover, all rational contractions
$f: Y_{C} \dasharrow Y'$ to normal projective varieties $Y'$ are induced by GIT from
linearizations of the action of $\hat G$ on $X$.
In particular, this is shown to hold for a diagonal embedding $\hat G \hookrightarrow (\hat G)^k$,
with sufficiently large $k$. 
\end{abstract}

\vspace{-0.3cm}

\small{
\tableofcontents
}

%\newpage

\section{Introduction}

One of the fundamental problems in representation theory, occurring in various situations, is the understanding of the space of invariants
$V^{\hat G}$, where $\hat G\to G$ is a morphism of groups and $V$ is a representation space of $G$. We apply the framework of Variation of Geometric Invariant Theory, VGIT (in the sense of 
\cite{Dolga-Hu},\cite{Thaddeus-GITflips}), to embeddings of semisimple complex algebraic groups
$\iota:\hat G\subset G$ and study comparatively the $\hat G$-action on the complete flag variety $X=G/B$, where $B$ is a Borel subgroup, on one hand,
and the variations in the dimension of the space of invariants $V^{\hat G}$ for finite dimensional irreducible $G$-modules $V$, on the other hand.
The setting is classical, developed to a large extent in works related originally to the Horn problem formulated in terms of decompositions of tensor products
of representations, here $GL_n=\hat G \stackrel{diag}{\hookrightarrow}G=GL_n^{\times k}$, see \cite{Belk-Kumar-Ress} for a modern account. The general branching problem
of decomposing $G$-modules over $\hat G$ can be formulated as the problem of invariants for the diagonal embedding ${\rm id}\times\iota:\hat G\subset \hat G\times G$,
via the isomorphism ${\rm Hom}_{\hat G}(\hat V,V)\cong (\hat V^*\otimes V)^{\hat G}$. Recalling that the irreducible $G$-modules are parametrized up to isomorphism by
the elements of a Weyl chamber $\Lambda^+$ in the character lattice $\Lambda$ of a maximal torus $T\subset B$, one is led to consider the so-called (generalized)
Littlewood-Richardson monoid and cone, and their sections corresponding to invariants, also called the eigenmonoid and eigencone:
\begin{gather*}
\begin{array}{l}
LR(\hat G\subset G) = \{(\hat\lambda,\lambda)\in\hat\Lambda^+\times\Lambda^+ : (\hat V_{\hat\lambda}^*\otimes V_\lambda)^{\hat G}\ne 0\} \;, \\
LR_0(\hat G\subset G) = \{\lambda\in\Lambda^+ : V_\lambda^{\hat G}\ne 0\} \;, \\
\mc{LR}_0(\hat G\subset G) = {\rm Span}_{\R_+}LR_0 \subset \Lambda_\R \;\;,\;\; \mc{LR}(\hat G\subset G) = {\rm Span}_{\R_+}LR \subset \hat\Lambda_\R\times \Lambda_\R \;. 
\end{array}
\end{gather*}
They have been shown by Brion and Knop to be indeed finitely generated monoids and rational polyhedral cones, respectively. Our results concern finer geometric and combinatorial structures in $\mc{LR}_0$ and the global behaviour of the multiplicities $\dim V_{\lambda}^{\hat G}$. The methods are based on the identification of $\mc{LR}_0$ with the $\hat G$-ample cone on the flag variety, $C^{\hat G}(X)\subset{\rm Pic}(X)_\R$, the closed cone generated by the ample line bundles whose section rings admit nontrivial $\hat G$-invariant elements. We study the GIT-classes defined by equality of the unstable locus - the vanishing locus of the nontrivial homogeneous invariants. The identification $\mc{LR}_0\cong C^{\hat G}(X)$ depends on the hypothesis for $\mc{LR}_0$ to contain regular elements, or equivalently, for $C^{\hat G}(X)$ to be nonempty.
This condition fails only in distinguished special cases and, applied to Levi subgroups, turns out to play a key role in the description of the Kirwan-Ness stratification of the unstable loci and the calculation of their dimension. As an outcome, we exhibit a family of Mori dream spaces obtained as GIT-quotients, containing global information on the invariants, and presenting a potential interest for Mori theory and its interactions with the structure theory of semisimple groups.

The GIT approach to the problem of finding invariants for a given $\iota$ has been developed and applied successfully in a series of works, notably by Heckman, Berenstein, Sjamaar, Belkale, Kumar, Ressayre, Richmond and others (see \cite{Heckman-1982}, \cite{Beren-Sjam}, \cite{Belk-Kumar}, \cite{Ressayre-2010-GITandEigen}, \cite{Ress-Rich} and the references therein), culminating in a description of $C^{\hat G}(X)$ by a minimal set of inequalities. Without citing full statements, we sketch some aspects of the solution, since this will help introduce the setting and put our work into context.
The first milestone is the Borel-Weil theorem, providing models for the irreducible $G$-modules as the spaces of sections of effective line bundles on the flag variety, $H^0(X,\mc L_\lambda)=V_\lambda^*$ for $\lambda\in\Lambda^+$, with the isomorphism of lattices ${\rm Pic}(X)\cong \Lambda$, given by $\mc L_\lambda=G\times_B\C_{-\lambda}$; the dominant Weyl chamber $\Lambda^+$ spans the pseudoeffective cone, while the ample line bundles are given by strictly dominant weights $\Lambda^{++}$.
The $\hat G$-ample cone in ${\rm Pic}(X)_\R$ is then given by the line bundles admitting nonconstant invariants in their section rings. This construction fits into the framework of GIT, \cite{Mumfordetal-GIT}, thus providing tools, in particular the Hilbert-Mumford criterion, for the study of the $\hat G$-action and invariants. The inequalities defining $C^{\hat G}(X)$ in $\Lambda_\R$, given in \cite{Beren-Sjam} and optimized in later works, are derived from the Hilbert-Mumford criterion, and have the form
\begin{gather}\label{For Inequality}
\lambda(w\xi) \leq 0 \;,
\end{gather}
where $\lambda$ is the dominant weight representing the line bundle, $w$ is an element of the Weyl group $W$ of $G$, and $\xi$ is an element in the lattice of one-parameter subgroups in a Cartan subgroup of $\hat G$ of the form $\hat T=\hat G\cap T$, identified with the integral coweight lattice $\hat\Gamma\subset\Gamma=\Lambda^{\vee}\subset\mk t$. The pairs $(\xi,w)\in \hat\Gamma\times W$ appearing in the inequalities are subject to certain conditions, and the description of these conditions presents the main technical issue. A finite but redundant list of pairs is given by Berenstein and Sjamaar, minimized by Belkale and Kumar in the diagonal case, and by Ressayre for arbitrary embeddings of reductive groups. The list of relevant elements $\xi$ is relatively easy to obtain, they are determined by the weights of the $\hat G$-action on the quotient of Lie algebras $\mk g/\hat{\mk g}$; in the diagonal case, one has simply the fundamental coweights of $\hat G$.
The relevant Weyl group elements present a more delicate problem. The conditions on $w$, given in the aforementioned series of works starting with \cite{Beren-Sjam}, are cohomological, stated in terms of pullbacks of Schubert classes from flag varieties of $G$ 
to closed $\hat G$-orbits in them. There is an interest in a cohomology-free description of the $\hat G$-ample cone,
and this has been achieved for diagonal embeddings in \cite{Belk-Kumar} with a non optimal list, optimized for some classical groups in term of quiver representations by Derksen-Weyman \cite{Derksen-Weyman-Quiver} for groups of type A, and Ressayre \cite{Ress-CohFree} in types A,B,C.

We obtain a cohomology free description of $C^{\hat G}(X)$ by a finite list of inequalities, redundant in general, as a biproduct of one of our main results. A formulation is given in Theorem \ref{Theo Ck}. The proof is in fact parallel to that of \cite{Beren-Sjam}, the difference being rather formal than essential, but we present a full argument based directly on the Hilbert-Mumford criterion. Our goal is in fact the structure behind the boundary of $C^{\hat G}(X)$.

For a view on the global behaviour of invariants, it is convenient to consider the Cox ring of $X$ (cf. \cite{hk}), or total coordinate ring, consisting in this case of the sum of all irreducible $G$-modules, with its 
subrings generated by the individual line bundles.  
$$
{\rm Cox}(X)=\bigoplus\limits_{\mc L\in{\rm Pic(X)}} H^0(X,\mc L) \cong \bigoplus\limits_{\lambda\in\Lambda^+} V_\lambda \quad, \quad R_\lambda=\bigoplus\limits_{j=0}^\infty H^0(X,\mc L_\lambda^j)=\bigoplus\limits_{j=0}^\infty V_{j\lambda}^* \;.
$$
The $\hat G$-invariants we are after are then all assembled in the invariant ring ${\rm Cox}(X)^{\hat G}$, which is also finitely generated. Cox rings are an important ingredient in the theory of Mori dream spaces, the latter having finitely generated Cox rings as one of their essential defining properties (cf. \cite{hk} for the full definition). The flag varieties form indeed a class of known examples. It is natural to ask about a variety, a quotient $Y$, with ${\rm Cox}(X)^{\hat G}$ as a Cox ring, having the classical result for individual line bundles in mind. Such a variety would be a geometric incarnation of the complete information on invariants for the given pair $\hat G\subset G$. This topic is addressed in \cite{Seppanen-GlobBranch}, where such quotients are constructed and shown to be Mori dream spaces. The construction rests, however, on a nontrivial assumption for existence of $\hat G$-movable chambers in $C^{\hat G}(X)$, defined by line bundles whose 
rings of nontrivial invariants have vanishing locus, the unstable locus $X^{us}(\lambda)$, of codimension at least 2 in $X$, containing all points with positive dimensional stabilizers.

In the first part of this article, we address the question of existence of $\hat G$-movable chambers. We devise a general method addressing this problem built on a closed formula for the unstable locus, formulated in Theorem I in the next section. This formula allows us to study GIT-classes of line bundles and their variations. We give a description of the GIT-classes, showing that all inequalities defining chambers in $C^{\hat G}(X)$ are of same type as (\ref{For Inequality}), and we provide a procedure arriving at the relevant $w\xi$, formulated roughly in Theorem II in the next section, and more precisely in Theorem \ref{Theo hatG-chambersandfacets}. We show that the codimension of the unstable locus is equal to 1 at the regular boundary of the $\hat G$-ample cone grows in steps of 1 towards the interior,
so that there is a sequence of convex polyhedral cones $C^{\hat G}(X)=C_1\supset C_2\supset C_3...$, where $C_j$ spanned by line bundles with codimension of the unstable locus at least $j$. We derive a criterion for existence of $\hat G$-movable chambers in terms of the structure of the embedding $\hat G\subset G$. For diagonal embeddings $\hat G\subset \hat G^{\times k}=G$, we establish the existence of $\hat G$-movable chambers for sufficiently large $k$. 

The second main topic, formulated as Theorem IV in the next section, concerns the birational geometry of a quotient $Y$ by a $\hat G$-movable chamber. We establish a canonical identification of the GIT-chambers in $C^{\hat G}(X)$ with the Mori chambers in the pseudoeffective cone in ${\rm Pic}(Y)$. 

The two theorems are independent of each other. Together they present a method for the detection of the requested quotients and a description of the correspondence between their Cox rings. The content of the article is explained in the next section.

\section{Setting and statement of the main results}

The basic GIT notions relating the geometry of $X=G/B$ to the invariant rings $R_\lambda^{\hat G}$ are the notions of instability, semistability, stability and quotients, \cite{Mumfordetal-GIT}. The central role in the first part of this article is played by the $\hat G$-unstable locus, defined by the vanishing of the nonconstant invariants in the section ring of a given ample line bundle. The ample line bundles on $X$ have the form $\mc L_\lambda$ for $\lambda\in \Lambda^{++}$, and one has
$$
X^{us}(\lambda)=X_{\hat G}^{us}(\lambda)=Z(J_\lambda)\subset X \;,\quad J_\lambda=\bigoplus\limits_{j\geq 1}^\infty H^0(X,\mc L_{\lambda}^j)^{\hat G} = \bigoplus\limits_{j\geq 1}^\infty (V_{j\lambda}^*)^{\hat G}\;.
$$
The semistable locus is the complementary open set $X^{ss}(\lambda)=X_{\hat G}^{ss}(\lambda)=X\setminus X^{us}(\lambda)$. The Hilbert-Mumford criterion (stated in the next section) gives a numerical characterization of the unstable or, equivalently, semistable points, and allows to extend the notions to the $\R$-Picard group, i.e., $\lambda\in\Lambda_\R$. This yields a characterization of the $\hat G$-ample bundles by having an unstable locus of positive codimension or, equivalently, a nonempty semistable locus. In some sense this simple choice represents the main difference between our view on $C^{\hat G}(X)$ and the view we see in the articles mentioned above, since the cohomological conditions for $w$ are obtained from the condition for its Schubert cell to contain semistable points. We focus on instability. 

For the Littlewood-Richardson monoid $LR_0$ we need to consider nonregular dominant weights $\lambda\in\Lambda^+$ as well. These yield semiample line bundles and the above definition of the unstable locus extends to that case. However the structure of quotients may differ. We define the $\hat G$-ample cone on $X$ as the closed cone generated by ample line bundles as
$$
C^{\hat G}(X) = \ol{\{\lambda\in\Lambda_\Q^{++} : \exists q\in\N: J_{q\lambda}\ne 0\}} \subset \Lambda_\R \;.
$$
Whenever $C^{\hat G}(X)$ is nonempty, we have
$$
C^{\hat G}(X) = \mc{LR}_0(\hat G\subset G) = \{\lambda\in\Lambda_\R^+: {\rm codim}_X X^{us}(\lambda)>0\} \subset\Lambda_\R \;.
$$
A typical case where $C^{\hat G}(X)=\emptyset$ is given by the condition that $\hat G$ and $G$ have nontrivial common connected normal subgroups. This condition is in fact necessary and sufficient for the full branching cone $\mc{LR}$ to have full dimension in $\hat\Lambda_\R\times\Lambda_\R$, cf. \cite{Ressayre-2010-GITandEigen}. The cone $\mc{LR}_0$ may have positive codimension in $\Lambda_\R$ and contain, or not contain, regular weights. For instance, for $\hat G=Sp_{2n}\subset SL_{2n}=G$ we have $C^{\hat G}(X)=\emptyset$, while for $\hat G\stackrel{diag}{\hookrightarrow} \hat G\times \hat G=G$, we have $C^{\hat G}(X)=\mc{LR}_0=\{(\hat\lambda,\hat\lambda^*):\hat\lambda\in\hat\Lambda^+\}$.

Our first main result is an explicit description of the Kirwan stratification of the $\hat G$-unstable locus in $X=G/B$ with respect to any ample line bundle, proven in Theorem \ref{Theo KirwanStratFlag}. This yields a closed formula for $X^{us}(\lambda)$, as well as formulae and combinatorial bounds for its dimension. Below we state a simplified version, with a hypothesis on $\lambda$, which allows for a concise statement, and is in fact important in the rest of the discussion. Theorem \ref{Theo KirwanStratFlag} allows arbitrary reductive subgroups and ample line bundles, and contains a precise description of the Kirwan strata and their dimensions. 

Recall that the $T$-fixed points in $X$ are parametrized by the elements of the Weyl group $W$, as $X^T=\{x_w=wB, w\in W\}$; their $B$-orbits give the Schubert cell decomposition $X=\sqcup_w Bx_w$. Suppose, which can be done without loss of generality, that Weyl chambers for $\hat G$ and $G$ are chosen so that $\dim\mk t_+\cap\hat{\mk t}_+=\dim\hat{\mk t}_+$ (real dimension). For any one-parameter subgroup of $T$, $\xi\in\Gamma$, let $P_\xi\subset G$ denote the parabolic subgroup, whose Lie algebra is the sum of the eigenspaces of ${\rm ad}\xi$ with nonnegative eigenvalues. Let $P_1,...,P_q$ be the maximal parabolic subgroups of $G$ among $P_\xi$ with $\xi\in\hat\Gamma^+\setminus\{0\}$. Then there are uniquely determined indivisible dominant one-parameter subgroups $\xi_1,...,\xi_q\in\hat\Gamma^+$ such that $P_j=P_{\xi_j}$. The set $\xi_1,...,\xi_q$ is known to be been related to the Littlewood-Richardson cone, notably in the works of Ressayre, \cite{Ressayre-2010-GITandEigen}. We show how it arises naturally in the study of the Kirwan-Ness stratification. For diagonal embeddings the $\xi_j$'s are just the fundamental coweights of $\hat G$. Set $r_j = \dim G/P_j$, $\hat r_j=\dim \hat G/\hat P_j$.\\

\noindent{\bf Theorem I:} Let $\lambda\in \Lambda^{++}$. Then the $\hat G$-unstable locus can be written as the $\hat G$-saturation of a union of parabolic orbits
\begin{gather*}\label{For Xus Intro}
X_{\hat G}^{us}(\lambda) = \bigcup\limits_{j=1}^q \bigcup\limits_{w\in W: \lambda(w^{-1}\xi_j)>0} \hat GP_{j} x_w
\end{gather*}
Furthermore, denoting $p_j(w)=\dim P_jx_w$, we have
\begin{align*}
& \dim\hat GP_j x_w \leq \hat r_j + p_j(w) \\
& \dim X^{us}(\lambda) = \min\limits_{j,w}\{ \hat r_j + p_j(w) \;:\; \lambda(w^{-1}\xi_j)>0 , \dim \hat GP_\xi x_w=\hat{r_j} + p_j(w)\} \;.
\end{align*}

The closure of a parabolic orbit $\ol{P_j x_w}$ is a Schubert variety, perhaps not for $B$, but for a Borel subgroup $B^j\subset P_j$, Weyl-conjugate of $B$, for which $\xi_j$ is dominant in $G$. The 
dimension and codimension of $\hat GP_\xi x_w$ can be computed in terms of lengths of Weyl group elements, which is very useful for our calculations. The 
condition $\dim \hat GP_jx_w=\hat{r_j}+p_j(w)$ is related to the notions of Levi-movability and the Belkale-Kumar product cohomology of flag varieties, a key notion in the minimal description of the $\hat G$-ample cone in the works of Belkale-Kumar, Ressayre, Richmond, \cite{Belk-Kumar},\cite{Ressayre-2010-GITandEigen},\cite{Ress-Rich}, We develop an independent approach focused on the unstable locus, based directly on the Hilbert-Mumford criterion and the Kirwan-Ness stratification theorem. Our method is rather related to a method used by Popov, \cite{Popov-Nullforms}, to study unstable loci representation spaces. Also, as an addendum independent from the rest of 
our results, we also adopt a combinatorial tree-algorithm from \cite{Popov-Nullforms}, which can be used to determine the set of relevant (stratifying) pairs $(j,w)$. As a 
corollary, we obtain cohomology-free description of $C^{\hat G}(X)$ which has a rather recursive algorithmic character. Our description is not necessarily optimal, redundant inequalities may occur. It is, however, exact and allows us to study the interior of the $\hat G$-ample cone.\\

The $\hat G$-ample cone is subdivided into GIT-equivalence classes, defined by equality of the unstable loci, i.e., $\lambda\sim\lambda'$ if and only if $X^{us}(\lambda)=X^{us}(\lambda')$. For $\hat G$-ample line 
bundles, the projective spectrum of the invariant ring is isomorphic to the GIT-quotient of $X$ defined by Hilbert's equivalence relation on the semistable locus:
$$
Y_\lambda = X^{ss}(\lambda)// \hat G \cong {\rm Proj}(R_\lambda^{\hat G}) \quad,\quad x_1\sim x_2 \Longleftrightarrow \overline{\hat Gx_1}\cap\overline{\hat Gx_2} \cap X^{ss}(\lambda) \ne \emptyset \;.
$$
The quotients defined by GIT-equivalent bundles are clearly isomorphic, and we denote $X^{us}_{\hat G}(C)=X^{us}_{\hat G}(\lambda)$ and $Y_{C}=Y_\lambda$ 
for a GIT-class $C\ni \lambda$. The GIT-classes form a system of cones in $C^{\hat G}(X)$, and there are rational maps between some of these quotients,
depending on relations between the corresponding GIT-classes (cf. \cite{Dolga-Hu}, \cite{Thaddeus-GITflips}, \cite{Ress-GITequiv}).

Some important properties of the quotient are reflected in properties of the unstable locus. Note that the quotient is geometric when the semistable orbits are equidimensional. 
In particular, one considers the set of infinitesimally free orbits closed in the semistable locus, called the stable locus:
$$
X^{s}_{\hat G}(\lambda) = \{ x\in X^{ss}_{\hat G}(\lambda): \hat Gx \subset X^{ss}_{\hat G}(\lambda)\;{\rm closed}, \dim\hat G_{x}=0 \} \;,
$$
where $\hat G_x$ denotes the stabilizer of $x$. The GIT-class of $\lambda$ is called a chamber if all semistable points are stable, i.e., 
$X^{us}_{\hat G}(\lambda)$ contains all points with positive dimensional stabilizer. It is shown in \cite{Seppanen-GlobBranch} that, for embeddings of semisimple groups 
acting on complete flag varieties, the chambers are exactly the full-dimensional GIT-classes in $C^{\hat G}(X)$. The following theorem is perhaps 
known to experts, but we state it here since it is important in our setting, and present a proof in the text.\\

{\bf Theorem II:} The $\hat T$-ample cone on $X$ consists of the entire Weyl chamber and $\hat T$-chamber structure is defined 
by the hyperplanes $\mc H_{w\xi_j}$ orthogonal to $w\xi_j$ for $w\in W$ and $j=1,...,q$. The $\hat G$-chambers, whenever they exist, are convex cones, open in $\Lambda_\R$, spanned by certain unions of $\hat T$-chambers.\\

The Picard group of the quotient $Y_\lambda$ is naturally related to the Picard group of $X$, cf. \cite{KKV}. 
The relation becomes simpler when the unstable locus does not contain divisors. This motivates the definition of $\hat G$-movable GIT-classes as 
those whose unstable locus has codimension at least 2. The union of these classes forms a cone, called the $\hat G$-movable cone on $X$, denoted by
$$
{\rm Mov}^{\hat G}(X)=\{ \lambda\in \Lambda_\R^+: {\rm codim}_XX^{us}_{\hat G}(\lambda)\geq 2 \} \subset C^{\hat G}(X) \;.
$$

A $\hat G$-{\it movable chamber} is then a full-dimensional GIT-class $C$ with ${\rm codim}_X X^{us}(C) \geq 2$ and $X^{ss}(C)=X^{s}(C)$. In such a case, we obtain a 
geometric quotient $Y_\lambda$ whose Picard group embeds, via pullback followed by extension, as a sublattice of full rank in the Picard group of $X$, yielding an 
isomorphism over the reals. Such a quotient is shown in \cite{Seppanen-GlobBranch} to be a Mori dream space whose effective cone is identified with $C^{\hat G}(X)$. 
{\bf The question} arises: do $\hat G$-movable chambers exist, or under what conditions?

The requested chambers are not always present. An important class of counterexamples is supplied by spherical subgroups $\hat G\subset G$, where $\dim V_\lambda^{\hat G}\leq 1$: there are no $\hat G$-movable line bundles and the quotient is a point. There are also non-spherical cases, like $SL_2\subset SL_2^{\times 4}$, where the $\hat G$-movable cone is the diagonal ray $(\R_+)\rho$. In our previous work \cite{Seppa-Tsa-Principal} we have obtained detailed results for $\hat G$ a principal $SL_2$-subgroup of a semisimple group $G$; in this case $\hat G$-movable chambers exist if $\dim X\geq 5$, which for simple $G$ means not to be of type $A_2$ or $B_2$.\\

Using our formula for the unstable locus, we obtain a concrete description of a system of nested cones in $\Lambda_\R^+$ defined by codimension of the unstable locus, beginning with the $\hat G$-ample and the $\hat G$-movable cones.\\

\noindent{\bf Theorem III:} {\it The sets $C_k^{\hat G}(X)=\{\lambda\in\Lambda_\R^+:{\rm codim}_XX^{us}(\lambda)\geq k\}$, defined for $k\geq1$, form a finite sequence of nested rational polyhedral cones given by
$$
C_k^{\hat G}(X) = \{\lambda\in\Lambda_\R^+ :\;\; \lambda(w^{-1}\xi_j)\leq 0 \;\;, \;\forall j,w : \dim \hat GP_jx_w = \hat r_j+p_j(w)=\dim X-k+1\}.
$$
Moreover, the cone $C_{k+1}$ is contained in the interior of $C_{k}$, in the topology of the Weyl chamber.

The $\hat G$-ample and -movable cones are obtained for $k=1$ and $2$, respectively. Furthermore:

(i) The $\hat G$-ample cone is given by
$$
C^{\hat G}(X) = \{\lambda\in\Lambda_\R^+ :\;\; \lambda(w^{-1}\xi_j)\leq 0 \;\;,\forall j,w : \hat G\ol{P_jx_w} = X\}.
$$
For line bundles on its regular boundary, $\lambda\in\Lambda^{++}\cap \partial C^{\hat G}(X)$, one has unstable locus of codimension 1.

(ii) $\hat G$-movable chambers exist if and only if the cone $C_2^{\hat G}(X)$ has full dimension.

(iii) If $C_3^{\hat G}(X)\ne 0$, then $\hat G$-movable chambers exist.

(iv) The subgroup $\hat G \subset G$ is spherical if and only if $C_2^{\hat G}(X)=\{0\}$, i.e., $\lambda=0$ is the only dominant weight satisfying the 
inequalities $\lambda(w^{-1}\xi_j)\leq 0$, whenever $\dim\hat GP_jx_w =\dim X- 1$.}\\

A priori it is not clear that the codimension of the unstable loci could not make ``jumps'' and increase in steps bigger than one when passing from one GIT class in $C^{\hat G}(X)$ to another. The following ``no jumps'' result (cf. Lemma \ref{Lemma codim Jump 1}) is a key step in our proof of the above theorem, and presents an interest by itself.\\

\noindent{\bf No jump lemma:} Suppose that $C_1, C_2\subset C^{\hat G}(X)$ are GIT-classes in the $\hat G$-ample cone satisfying $\ol{C_1}\supset C_2$. Then
$$
| {\rm codim}_X X^{us}(C_1) - {\rm codim}_X X^{us}(C_2) | \leq 1 \;.
$$
The same inequality holds if $C_1, C_2$ are GIT-chambers sharing a facet.\\

The following corollary gives a numerical sufficient condition for presence of $\hat G$-movable chambers, expressed in terms of some structure constants of the embedding $\hat G\subset G$. It is obtained by considering the anticanonical bundle on $X$, i.e., $\lambda=2\rho$, the sum of the positive roots of $G$, which tends, heuristically, to have a small unstable locus.\\

\noindent{\bf Corollary:} For $j=1,...,q$, let $a_j$ and $b_j$ denote, respectively, the minimal and maximal positive value of a root of $G$ on $\xi_j$. If $\min\limits_j\{\frac{a_j}{a_j+b_j}r_j-\hat r_j\}\geq 2$, then $X$ admits $\hat G$-movable chambers.

In particular, $\hat G$-movable chambers exist for diagonal embeddings $\hat G\subset G=\hat G^{\times k}$ with sufficiently large $k$. If $\hat G$ is a product of classical groups, it suffices to take $k\geq 5$.\\\\

The second topic of this article concerns the quotients arising from $\hat G$-movable chambers, their Picard groups and Cox rings. Refining the aforementioned results \cite{Seppanen-GlobBranch} on the effective cone on the quotient, we find a natural identification between the GIT-equivalence relation in ${\rm Pic}(X)$ with the Mori equivalence relation in ${\rm Pic}(Y)$. The proofs appear in Theorems \ref{T: conesineff} and \ref{T: Mori-GITchambers}. The definition of Mori chambers can be found in Section \ref{Sect Mori chambers}, see also \cite{hk} for notions concerning Mori dream spaces.\\

\noindent{\bf Theorem IV:} {\it Suppose that there exists a $\hat G$-movable chamber $C\subset C^{\hat G}(X)$ and let $Y=Y_C$ be the corresponding GIT-quotient of $X$. Then $Y$ is a Mori dream space and there is a 
canonical isomorphism of $\R$-Picard groups giving rise to the following identifications:
\begin{gather*}
\begin{array}{rcl}
{\rm Pic}(X)_\R &\cong& {\rm Pic}(Y)_\R \\
C^{G}(X) &\cong& \ol{\rm Eff}(Y) \\
\textit{GIT-chambers} & \leftrightarrow & Mori\; chambers \\
{\rm Mov}^{\hat G}(X) & \cong & {\rm Mov}(Y) \\
\ol{C} & \cong & {\rm Nef}(Y) \\
{\rm Cox}(X)^{\hat G} \;\cong& \bigoplus\limits_{\lambda\in\Lambda^+} V_\lambda^{\hat G} &\cong \; \textit{Finite extension of}\; {\rm Cox}(Y)\;.
\end{array}
\end{gather*}
Moreover, all rational contractions of $Y$ to normal projective varieties are induced by VGIT from $X$.}\\

Let us note that the family of Mori dream spaces produced as GIT-quotients of flag varieties could be of independent interest. For the sake of representation theory, clearly a concrete model for $Y$ would be of great benefit -- as explained above, this variety would encode the full information on dimensions of $\hat G$-invariants in $G$-modules for the given subgroup $\hat G\subset G$. Although we are able to prove many nice properties, these spaces remain somewhat implicit,
as is often the case with quotients, due to the implicit nature of the fundamental existence results in invariant theory. The same is true to some extent for Mori dream spaces since several general constructions involve quotients, while many explicit alterations of varieties destroy the Mori dream property. It is therefore of interest to know whether our quotients appear among the known examples of Mori dream spaces. Perhaps the interaction of the Mori theory with the structure theory of semisimple groups 
could help to obtain more concrete information about this family of spaces, ideally build concrete models at least for special classes of subgroups like diagonals.

\section{The Hilbert-Mumford criterion and the Kirwan-Ness stratification}

Our approach to instability is based--as is often the case--on a fundamental result of Hilbert, reducing instability for linear actions of reductive groups to instability for their 
one-parameter subgroups, developed further by Mumford; cf. \cite{Mumfordetal-GIT}, for the general theory, and \cite{Ness-StratNullcone} for a shorter presentation suitable for our purposes.\\

\noindent{\bf Hilbert's theorem:} {\it Let $H\to GL(V)$ be a representation of a reductive complex algebraic group $H$. Then the ring of invariants $\C[V]^H$ is generated by a finite number of homogeneous elements. Let $J\subset \C[V]^H$ be the ideal vanishing at $0$ and let $V_H^{us}\subset V$ denote its zero locus, called the unstable locus. Then
$$
V_H^{us} = \{v\in V:\ol{Hv}\ni0\} =\{v\in V: \exists \gamma\in {\rm Hom}(\C^*, H): \lim\limits_{t\to0}\gamma(t)v=0\}\;.
$$
}\\

The homogeneity of the generators ensures that $\mP(V)_H^{us}$ is well defined. For a projective variety $Z\subset \mP(V)$ preserved by $H$, the zero locus of $J_{\vert Z}$ in $Z$ is obtained by intersection $Z_H^{us}=Z\cap\mP(V)_H^{us}$. The restriction of $\mc O_{\mP(V)}(1)$ to $Z$ is an equivariant ample line bundle $\mc L$. 

Mumford has devised a numerical criterion for instability for equivariant ample line bundles on projective varieties. We shall give a general statement, but in order to keep the notation in line we return to the case in hand, $H=\hat G, Z=X=G/B, \mc L=\mc L_\lambda, V=V_\lambda$, with some $\lambda\in\Lambda^{++}$ fixed for this section. This line bundle is ample, giving rise to a projective embedding obtained as the orbit of a highest weight vector:
$$
X\cong G[v^{\lambda}]\subset\mP(V_\lambda) \;.
$$
We identify the elements $\gamma\in{\rm Hom}(\C^*,G)$ by their infinitesimal generators in the Lie algebra $\xi=\dot\gamma(1)\in\mk g$ and call them one-parameter subgroups (OPS). We consider the $\xi$-unstable locus, taking the orientation into account:
$$
X^{us}_{\xi}(\lambda) = \{ [v]\in X: \lim\limits_{t\to-\infty}{\rm exp}(t\xi)v=0\}.
$$

Let us fix a pair of Cartan and Borel subgroups $\hat T\subset \hat B\subset \hat G$. The OPS of $\hat T$ form a lattice naturally identified with the dual to the weight lattice $\hat\Gamma=\hat\Lambda^\vee\subset\hat{\mk t}$. Recall that every OPS of $\hat G$ is conjugate to a unique element of $\hat\Gamma^+$, the set of dominant elements with respect to $\hat B$.

Mumford's numerical function for $\xi\in \hat\Gamma$,
\begin{gather}\label{For MumfordFunk}
M^\xi:X\to \Z \;,
\end{gather}
is defined as follows. For $x\in X$ let $x_0=\lim\limits_{t\to -\infty}{\rm exp}(t\xi)x \in X$. The limit point belongs to the fixed set of the OPS, $x_0\in X^\xi$. The connected components of $X^\xi$ are contained in the projectivizations of the eigenspaces of $\xi$. Define $M^\xi(x)$ to be the eigenvalue of $\xi$ at $x_0$. The point is $\xi$-unstable if the eigenvalue is positive.\\

\noindent{\bf Hilbert-Mumford criterion:} {\it Let $\mc L$ be a $\hat G$-equivariant ample line bundle on $X$, in the above notation $\mc L=\mc L_\lambda$, for some $\lambda\in\Lambda^{++}$. A point $x\in X$ is $\hat G$-unstable if and only if it is unstable for some one-parameter subgroup of $\hat G$. We have
$$
X^{us}_{\hat G}(\lambda) =\hat G X^{us}_{\hat T}(\lambda) = \hat G \left( \bigcup\limits_{\xi\in\hat\Gamma^+} X^{us}_{\xi}(\lambda)\right) \quad,\quad X^{us}_{\xi}(\lambda) = \{x\in X : M^\xi(x)>0 \}.
$$
}\\

To interpret the criterion in the specific situation of $X=G/B$, it is convenient to consider $T$ and $B$ containing $\hat T$ and $\hat B$,
respectively. Such extensions always exist, in general there are many, and this plays a role in our approach. The extensions of Borel subgroups correspond to the closed $\hat G$-orbits in $X$, which are all of the form $\hat G/\hat B$. The closed $\hat G$-orbits are all unstable for all $\lambda^{++}$. The extensions of Cartan subgroups allows to evaluate weights of $T$ on elements of $\hat T$, and compute with the criterion. We obtain $\iota:\hat\Gamma\subset \Gamma$ and $\iota^*\Lambda\to \hat\Lambda^+$. Notice that, for any nested pair of Borel subgroups we have $\iota:\Lambda^+\to\hat\Lambda^+$, but $\hat\Gamma^+$ is not necessarily contained in $\Gamma^+$. The hypothesis for our proof of the formula for the $\hat G$-unstable locus is that $\hat T$ contains regular elements of $G$ and hence determines a unique $T$. In this situation,
the eigenvalues of any $\xi\in\hat\Gamma$ in $V_\lambda$, and on $X^\xi$ in particular, are the values on $\xi$ of the $T$-weights.
To this end we introduce the following notation. Let $\mathcal{P}(\lambda)\subset \Lambda$ denote the set of $T$-weights of the irreducible $G$-module $V_\lambda$.
For any $x \in X$, let $\tilde{x} \in V_\lambda$ be a vector with $[\tilde{x}]=x \in X \subseteq \mathbb{P}(V_\lambda)$. 
We can decompose $\tilde{x}$ as a sum of weight vectors, 
\begin{align}
 \tilde{x}=\sum_{\mu \in \mathcal{P}(\lambda)} v_\mu. \label{E: weighdecompx} 
\end{align}
For $x \in X$, let $St(x) \subseteq \mathcal{P}(\lambda)$ denote the set of weights $\mu$ for which $v_\mu \neq 0$ in 
the decomposition \eqref{E: weighdecompx}. Then we have 
\begin{gather}\label{For Mxi = min St}
M^\xi(x)=\min\{\mu(\xi):\mu\in St(x)\} \;. 
\end{gather}

To compute the dimensions of the unstable loci, we shall need some general results from geometric invariant theory concerning stratifications of unstable loci. Specifically, the stratification theorems due to Kirwan in the symplectic setting and Ness for projective varieties, see \cite{Kirwan}, sections 12 and 13, and \cite{Ness-StratNullcone}, Theorem 9.5. More recently, Popov, \cite{Popov-Nullforms}, has refined the stratification results for (projective) representation spaces. It turns out that Popov's constructions can be applied successfully for complete flag varieties as well, as we show in Section \ref{Section Popov}.

The so-called Hesselink strata of $X^{us}$, as they are defined in \cite{Ness-StratNullcone} for any equivariantly embedded smooth $\hat G$-variety $X\subset\mP(V)$ with a $\hat G$-linearized $\mc O(1)$, have the form $\hat G X_{\xi,m}$, where $X_{\xi,m}$ is the so-called blade, determined by a one-parameter subgroup $\xi\in\hat \Gamma$ and a positive integer $m$ obtained as the value of a weight of a $\xi$-fixed point on $X$. Formally, any $\xi\in\hat\Gamma$ defines a eigenspace decomposition
$$
V=\bigoplus\limits_{m\in \Z} V^{\xi,m} \quad,\quad V^{\xi,m} = \{v\in V: \xi v = m v \} 
$$
The fixed point set in $X$ is then partitioned as
$$
X^\xi = \bigsqcup\limits_{m\in \Z} X^{\xi,m} \quad,\;{\rm where} \quad X^{\xi,m} = X\cap \mP(V^{\xi,m}) \;.
$$
The blade $X_{\xi,m}$ is defined as set set of points flowing into $X^{\xi,m}$ under $\xi_t={\rm exp}(t\xi)$ as $t\to -\infty$. In the projective situation the blades are obtained by intersection with the blades of the ambient projective space and are given by (cf. \cite{Popov-Nullforms})
$$
X_{\xi,m} = X\cap \mP(V)_{\xi,m} \;,\;\; \mP(V)_{\xi,m}=\mP(V^{\xi\geq m})\setminus\mP(V^{\xi>m}) \;,
$$
where $V^{\xi\geq m}$ denotes the sum of the eigenspaces with eigenvalue greater or equal to $m$ and similarly for $V^{\xi>m}$. Note that $\mP(V)_{\xi,m}$ is an orbit of the parabolic subgroup of $SL(V)$ defined by $\xi$ and limit set $\mP(V^{\xi,m})$ is an orbit of its Levi subgroup $SL(V)_\xi$, the centralizer of $\xi$. The blade $X_{\xi,m}$ is preserved by the parabolic subgroup $\hat P_\xi$ and the limit set $X^{\xi,m}$ is preserved by the centralizer of $\xi$, which is a Levi subgroup denoted by $\hat L_\xi$. There is a natural map
$$
\hat G\times_{\hat P_\xi} X_{\xi,m} \to \hat G X_{\xi,m} \;.
$$
By the Hilbert-Mumford criterion, $X_{\hat G}^{us}$ can be written as the union of $\hat G X_{\xi,m}$ over $\xi\in\hat\Gamma^+$ and $m>0$. The stratification theorems concern the existence of a finite number of blades, whose $\hat G$-saturation gives the entire unstable locus. Kirwan gives a characterization of these stratifying blades, referring to the connected components of $X^{\xi,m}$ and sets of semistable points in them.

\begin{defin}\label{Def StratElts}
A dominant one-parameter subgroup $\xi\in\hat\Gamma^+$ is called a stratifying element for $X_{\hat G}^{us}$, if it satisfies the following conditions:
\begin{enumerate}
 \item[(i)] $\xi$ is indivisible, i.e., $\frac1k \xi \notin \hat\Gamma$ for all $k>1$;
 \item[(ii)] there exists $m>0$ such that $(X^{\xi,m})_{\hat L_{\xi}/\xi}^{ss}(\mc O(1))\ne \emptyset$,
\end{enumerate}
The pairs $(\xi,m)$ for which (ii) holds are called stratifying pairs, and the corresponding blades $X_{\xi,m}$ - stratifying blades. We denote by $\mk S\subset\hat\Gamma^+$ the set of the stratifying elements.
\end{defin}

Note that the first condition is necessary just to remove the obvious redundancy arising from $X^{\xi,m}=X^{k\xi,km}$, while the second condition contains the essence of the notion. The next theorem states that the stratifying OPS are obtained as follows. For any set of weights $S\in \hat\Lambda$, consider the closest to zero point $\nu_S\in{\rm Conv}(S)\subset\hat\Lambda_\R$ and let $\xi_S\in\hat\Gamma$ denote the indivisible OPS generating the ray in $\hat{\mk{t}}$ corresponding, under the Killing from, to the ray of $\nu_S$. We set $\xi_S=0$ if $\nu_S=0$. Similarly, if $Z\subset X\subset \mP(V)$ is a subvariety preserved by $\hat T$, we denote by $St(Z^{\hat T})\subset\Lambda$ the set of weights of the $\hat T$-fixed set, and by $\xi_Z=\xi_{St(Z^{\hat T})}\in\hat\Gamma$ the OPS resulting from this set of weights. The following theorem has significant generalizations in both the symplectic and algebraic directions. We state it here in the projective setting, as we shall need it, and we adhere to our 
current notation, although the statement is general.

\begin{thm}\label{Theo Kirwan-Ness} {\rm (Kirwan, \cite{Kirwan}, Ness, \cite{Ness-StratNullcone})}

Let $X\subset \mP(V)$ be a smooth projective variety preserved by a reductive group $\hat G$ linearly represented on $V$. Let $St(X^{\hat T})$ be the set of weights of the fixed point set of a Cartan subgroup $\hat T\subset \hat G$. Let $\Xi=\{\xi_S: S\subset St(X^{\hat T})\}\cap (\hat\Gamma^+\setminus\{0\})$.
$$
X^{us}_{\hat G} = \bigsqcup\limits_{\xi\in\Xi, m\in \N} \hat G (X_{\xi,m})_{\hat L_\xi/\xi}^{ss} \;.
$$
For a nonempty stratum, the natural map $\hat G\times_{\hat P_\xi} (X_{\xi,m})_{\hat L_\xi/\xi}^{ss} \to \hat G (X_{\xi,m})_{\hat L_\xi/\xi}^{ss}$ is finite, and the dimension is
$$
\dim \hat G (X_{\xi,m})_{\hat L_\xi/\xi}^{ss} = \dim \hat G/\hat P_\xi + \dim X_{\xi,m} \;.
$$
\end{thm}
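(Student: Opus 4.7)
The plan is to upgrade the Hilbert-Mumford expression $X^{us}_{\hat G}=\hat G\bigcup_{\xi,m}X_{\xi,m}$ to a canonical parametrization, by attaching to each unstable $x\in X$ an optimal destabilizing one-parameter subgroup $\xi(x)\in\hat\Gamma^+$ via norm minimization. Writing $\tilde x=\sum_{\mu\in St(x)}v_\mu$, a vector $\eta\in\hat{\mk t}_\R$ destabilizes $x$ precisely when $\mu(\eta)>0$ for every $\mu\in St(x)$. After identifying $\hat{\mk t}_\R\cong\hat{\mk t}^*_\R$ via the Killing form, elementary convex geometry together with the strict convexity of $\|\cdot\|^2$ shows that the ratio $\min_{\mu\in St(x)}\mu(\eta)/\|\eta\|$ is maximized exactly by the Killing-dual of the point $\nu_{St(x)}\in\mathrm{Conv}(St(x))$ closest to the origin. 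Let $\xi(x)$ be the indivisible integral generator of the resulting ray, conjugated into $\hat\Gamma^+$; by construction $\xi(x)=\xi_S$ for $S=St(x_0)$, where $x_0$ is the $\xi(x)$-limit of $x$, so $\xi(x)\in\Xi$, and $x\in X_{\xi(x),m}$ with $m=\|\nu_{St(x)}\|^2$. Moreover, $x_0$ is semistable for $\hat L_{\xi(x)}/\xi(x)$, since any Levi-destabilizer of $x_0$ would lift to an OPS of strictly smaller norm than $\xi(x)$ destabilizing $x$, violating the minimality.

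The main obstacle is the uniqueness of $\xi(x)$ up to $\hat P_{\xi(x)}$-conjugacy, from which the disjointness of strata follows. Assume $x$ belongs to two strata, indexed by $(\xi,m)$ and $(\xi',m')$ in $\Xi\times\N$. Decomposing $\xi'$ into its component along $\xi$ and its component orthogonal to $\langle\xi\rangle$, and passing to the limit along the $\xi$-flow, either the orthogonal component vanishes (forcing $\xi'\in\hat P_\xi\cdot\xi$, whence $\xi'=\xi$ and $m'=m$ after conjugating back into $\hat\Gamma^+$) or the orthogonal component destabilizes $x_0$ in $X^{\xi,m}$ for the Levi quotient $\hat L_\xi/\xi$, contradicting the semistability assumption for the stratum. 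This Kempf-Hesselink norm-minimization argument, combined with the preceding construction of $\xi(x)$, yields the disjoint union.

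For the dimension statement, the natural surjection $\phi:\hat G\times_{\hat P_\xi}(X_{\xi,m})^{ss}_{\hat L_\xi/\xi}\to\hat G\cdot(X_{\xi,m})^{ss}_{\hat L_\xi/\xi}$ has fibre over $y=gx$ canonically identified with $\hat G_y/(g\hat P_\xi g^{-1}\cap\hat G_y)$. For $y$ with $\hat L_\xi/\xi$-semistable limit in $X^{\xi,m}$, the identity component of the stabilizer in $\hat L_\xi/\xi$ is trivial, so the fibres of $\phi$ are finite. The desired equality $\dim\hat G(X_{\xi,m})^{ss}_{\hat L_\xi/\xi}=\dim \hat G/\hat P_\xi+\dim X_{\xi,m}$ then follows from the standard dimension formula for associated bundles over $\hat G/\hat P_\xi$.
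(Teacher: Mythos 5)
You should note first that the paper does not prove this statement at all: Theorem \ref{Theo Kirwan-Ness} is imported verbatim, with attribution, from Kirwan \cite{Kirwan} and Ness \cite{Ness-StratNullcone}, so there is no internal argument to compare with. What you have written is an outline of the standard Kempf--Hesselink norm-minimization proof from those sources, and its overall shape is right: optimal destabilizer read off from the closest point of ${\rm Conv}(St(x))$, Levi-semistability of the limit by optimality, uniqueness up to parabolic conjugacy giving disjointness, and an associated-bundle dimension count. However, two steps are genuinely defective as written.

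The most concrete gap is your finiteness argument. You claim that a point whose $\xi$-limit is $\hat L_\xi/\xi$-semistable has trivial identity component of its stabilizer in $\hat L_\xi/\xi$. That is the characterization of \emph{stable} points, not semistable ones: strictly semistable points routinely have positive-dimensional stabilizers, and nothing in the definition of $(X_{\xi,m})^{ss}_{\hat L_\xi/\xi}$ excludes them. Consequently your identification of the fibre of $\phi$ with $\hat G_y/(g\hat P_\xi g^{-1}\cap\hat G_y)$ and the conclusion that it is finite do not follow as stated. The finiteness (indeed, in Kirwan's smooth setting, bijectivity) comes instead from Kempf's uniqueness theorem: if $g_1^{-1}y$ and $g_2^{-1}y$ both lie in $(X_{\xi,m})^{ss}_{\hat L_\xi/\xi}$, then $g_1\xi g_1^{-1}$ and $g_2\xi g_2^{-1}$ are both optimal destabilizing one-parameter subgroups of $y$, hence conjugate under the canonical parabolic attached to $y$, which forces the cosets $g_1\hat P_\xi$ and $g_2\hat P_\xi$ to coincide (at worst to range over a finite set); it is this argument, not a stabilizer bound, that yields finiteness and hence the dimension formula. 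Relatedly, your disjointness paragraph compresses exactly this uniqueness theorem---the technical heart of the stratification, requiring the convexity and normalized-weight comparisons of Kempf, or Ness's Section 9---into a two-line ``decompose $\xi'$ and pass to the limit'' remark; the assertion that the orthogonal component of $\xi'$ destabilizes $x_0$ for $\hat L_\xi/\xi$ needs the precise comparison of the ratios $M^{\eta}(x)/\|\eta\|$, and your index $m=\|\nu_{St(x)}\|^2$ is only correct up to the scalar relating the primitive integral generator $\xi$ to $\nu_{St(x)}$. Finally, the optimization must run over all one-parameter subgroups of $\hat G$, reduced to $\hat\Gamma^+$ by conjugation, not only over those of $\hat T$. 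All of this can be repaired by following Kirwan, Sections 12--13, or Ness, Theorem 9.5 and its preparation, but as written these steps are gaps rather than proofs.
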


\begin{rem}
The union in the formula is disjoint, but the index set, as written, gives rise to empty summands. The obvious redundancy comes form $m$: clearly the values of the weights of $V$ on any fixed $\xi$ are bounded since $V$ is finite dimensional. A more subtle problem is presented by empty summands arising from blades with trivial semistable locus, and we shall see these manifest when $X$ is a flag variety.
\end{rem}

\section{Instability on flag varieties and Schubert varieties}

In this section we prove our formula for the unstable locus presented as Theorem I in list of main results. The key observation is that, when $X=G/B$, the connected components of the Hesselink blades are not only preserved by the parabolic subgroups of $\hat G$, but are in fact orbits of parabolic subgroups of $G$ through $T$-fixed points, in a way that $X_{\xi,m}=\cup P_\xi x_w$. union over $w\lambda(\xi)=m$. We begin by summarizing some facts about instability on flag varieties, which are either classical or can be found for instance in \cite{Beren-Sjam},\cite{Ressayre-2010-GITandEigen}. We include them since this will help us introduce relevant objects and notation, and because we would like to emphasize the simplicity of the argument.

\subsection{One-parameter subgroups}

For a semisimple element $\xi\in \mk g$ let $\mk l_\xi$ denote the 0-eigenspace of ${\rm ad}\xi$, i.e., the centralizer of $\xi$, let $\mk p_\xi$ denote sum of the 
nonnegative eigenspaces, and let $\mk r_\xi^{\pm}$ denote the sum of the positive/negative eigenspaces. Then $\mk p_\xi$ is a parabolic subalgebra of $\mk g$ with Levi decomposition $\mk p=\mk l_\xi+\mk r_\xi^+$. We denote the corresponding subgroups of $G$ by $P_\xi,L_\xi,R_\xi^{\pm}$ and also write $R_\xi=R_\xi^{+}$. Recall that an element $\xi\in\mk g$ is 
called regular if its centralizer is a Cartan subalgebra or, equivalently, $\mk p_\xi$ is a Borel subalgebra. We denote by $\mk g_{\rm reg}$ the set of regular semisimple elements, 
and for any subset $A\subset\mk g$ we denote $A_{\rm reg}=A\cap\mk g_{\rm reg}$. An element of our fixed Cartan subalgebra $\mk t$ is regular if it belongs to the interior of some 
Weyl chamber, i.e., no root in $\Delta=\Delta(\mk g,\mk t)$ vanishes on it.

\begin{lemma}\label{Lemma OPSandParabOrbit} Let $\xi\in \Gamma^+$ be a dominant OPS of $G$ with respect to a Borel subgroup $B$. Let $P_\xi=L_\xi R_\xi$ be the associated parabolic subgroup. Then the following hold:

\begin{enumerate}
\item[{\rm (i)}] The set of fixed points of $\xi$ in $X$ consists of the union of the closed $L_\xi$-orbits, which are exactly the $L_{\xi}$-orbits of the $T$-fixed points, or the $L_\xi$-orbits of the $B\cap L_\xi$-fixed points, parametrized by the left coset space $W_\xi\setminus W$, or by the set ${^\xi W}$ of shortest representatives:
$$
X^\xi = \bigcup\limits_{w\in W} L_\xi x_{w} = \bigsqcup\limits_{w\in {^\xi W}} L_\xi x_{w}
$$
\item[{\rm (ii)}] Every $P_\xi$-orbit in $X$ contains exactly one closed $L_\xi$-orbit. Every $P_\xi$-orbit contains a unique open $B$-orbit and a unique 
$B$-orbit of minimal dimension. These correspond to a pair of elements $w^1,w_1\in W_\xi w$ in every coset, having, respectively, maximal and minimal length with respect to $B$ 
related by $w^1=w_{01}w_1$, where $w_{01}$ is the longest element in $W_\xi$ with respect to $B_\xi= B\cap L_\xi$. The closure of every 
$P_\xi$-orbit is a Schubert variety $\ol{P_{\xi} x_w} = \ol{Bx_{w^{1}}}$. The dimension and codimension of an orbit are given by
$$
\dim P_\xi x = l(w^1) = n_\xi + l(w_1) \quad,\quad {\rm codim}_X P_\xi x = r_\xi - l(w_1) \;.
$$
where $n_\xi=\dim P_\xi/B$ and $r_\xi=\dim R_\xi$.

\item[{\rm (iii)}] For $\lambda\in\Lambda^+$, the Mumford function $M^\xi$ defined in (\ref{For MumfordFunk}) is constant on $P_\xi$-orbits and its values are given by the weights corresponding to the $T$-fixed points:
$$
M^\xi(x) = M^\xi(x_{w})=w\lambda(\xi) \quad{\rm for}\quad x\in P_\xi x_w \;.
$$
The $\xi$-unstable locus in $X$ with respect to $\lambda$ is given by
$$
X_{\xi}^{us}(\lambda) = \bigsqcup\limits_{w\in {^\xi W}^+(\lambda,\xi)} P_\xi x_{w} = \bigcup\limits_{w\in W^+(\lambda,\xi)_{B{\rm -max}}} \ol{B x_{w}} \;,
$$
where $W^+(\lambda,\xi) = \{w\in W : w\lambda(\xi)>0\}$. The first union is disjoint, while the second one gives exactly the irreducible components.
\end{enumerate}

\end{lemma}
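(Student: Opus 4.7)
\medskip

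\noindent\textbf{Proof plan.}
The starting observation is that, since $\xi\in\mk t\subset \mk l_\xi$, the Cartan subgroup $T$ sits inside $L_\xi$; hence $X^T\subset X^\xi$ and both $T$ and $L_\xi$ preserve the fixed set $X^\xi$. For part (i), I would argue that every connected component of $X^\xi$ is a single $L_\xi$-orbit and contains a $T$-fixed point. Concretely, $L_\xi x_w=L_\xi\cdot wB/B\cong L_\xi/(L_\xi\cap wBw^{-1})$, and the intersection $L_\xi\cap wBw^{-1}$ is a Borel subgroup of $L_\xi$ precisely when the system $w\Delta^+\cap\Delta(\mk l_\xi,\mk t)$ is a positive system of $L_\xi$, which is the defining property of the shortest representatives $w\in {^\xi W}$ of $W_\xi\backslash W$. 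Thus each $L_\xi x_w$ ($w\in {^\xi W}$) is a complete flag variety for $L_\xi$, hence closed in $X$, and two such orbits coincide iff the Weyl elements lie in the same coset $W_\xi w$. Bruhat decomposition of $X$ relative to $B\cap L_\xi$ on the $L_\xi$-orbit shows these exhaust $X^\xi$.

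For part (ii), I would identify the $P_\xi$-orbits on $X$ with the double cosets $P_\xi\backslash G/B$, parametrized again by $W_\xi\backslash W$. For $w\in {^\xi W}$ the $P_\xi$-orbit decomposes into $B$-orbits indexed by $W_\xi\cdot w$, with the length function monotone: if $w_1\in {^\xi W}$ is the minimal representative, then every $w'\in W_\xi w_1$ has a reduced expression extending that of $w_1$ by simple reflections of $L_\xi$, and the unique maximal representative is $w^1=w_{01}w_1$, with length $l(w_{01})+l(w_1)=n_\xi+l(w_1)$ by the standard length-additivity for ${^\xi W}$. Hence the unique open $B$-orbit in $P_\xi x_{w_1}$ is $Bx_{w^1}$, so $\overline{P_\xi x_{w_1}}=\overline{Bx_{w^1}}$ is a Schubert variety, and the dimension/codimension formulae follow.

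For part (iii), the key computation is the limit $\lim_{t\to-\infty}\exp(t\xi)\cdot p x_w$ for $p\in P_\xi$. Using the Levi decomposition $p=lr$ with $l\in L_\xi$ and $r\in R_\xi$, the dominance of $\xi$ gives $\mathrm{Ad}(\exp(t\xi))r\to 1$ as $t\to-\infty$ (every eigenvalue of $\mathrm{ad}\,\xi$ on $\mk r_\xi^+$ is strictly positive), while $l$ commutes with $\exp(t\xi)$ and $x_w\in X^T\subset X^\xi$ is fixed. Hence the limit is $l\cdot x_w\in L_\xi x_w$, and since the weight at $x_w$ inside the embedding $X\hookrightarrow\mathbb{P}(V_\lambda)$ is $w\lambda$, formula \eqref{For Mxi = min St} yields $M^\xi(x)=w\lambda(\xi)$ constantly along $P_\xi x_w$. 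The stated description of $X^{us}_\xi(\lambda)$ is then the union of those $P_\xi$-orbits for which $w\lambda(\xi)>0$; the closures of the maximal-length representatives $w^1$ give the irreducible Schubert components.

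The only nontrivial technical point is the bookkeeping with shortest/longest coset representatives and the identification $L_\xi\cap wBw^{-1}=B_\xi^w$ for $w\in {^\xi W}$; everything else reduces to the Bialynicki--Birula picture together with the computation of the $\xi$-weight at a $T$-fixed point. I expect the bookkeeping of the Weyl-length identities in (ii) to be the main place where care is needed, but it is standard.
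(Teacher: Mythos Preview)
Your proposal is correct and follows essentially the same route as the paper. The only notable difference is in part (iii): you establish the constancy of $M^\xi$ along $P_\xi$-orbits by an explicit Bialynicki--Birula limit computation (writing $p=lr$ and letting $\mathrm{Ad}(\exp(t\xi))r\to 1$), whereas the paper takes this constancy as implicit from (i) and instead spends its effort on the second equality, proving via Demazure modules that $w'\le w$ in Bruhat order implies $w'\lambda(\xi)\ge w\lambda(\xi)$, so that $W^+(\lambda,\xi)$ is a Bruhat lower set and the irreducible components are indexed by its Bruhat-maximal elements. Your final sentence (``the closures of the maximal-length representatives $w^1$ give the irreducible Schubert components'') skips this monotonicity step; to make the argument complete you should either invoke the closedness $X^{us}_\xi=X\cap\mathbb{P}(V^{\xi>0})$ directly, or supply the Bruhat-order inequality as the paper does.
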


\begin{proof}
For (i), since $L_\xi$ acts on $X^\xi$ and $X^T\subset X^\xi$ we have $L_\xi x_w\subset X^\xi$ for all $w$. On the other hand, we note that any fixed point $x\in X^\xi$ belongs to a unique Schubert cell $Bx_w$. Using the fact that $Bx_w=Nx_w$ and the global linearization of the $T$-action on $Nx_w$, one observes that $X^{\xi}\cap Nx_w = N_\xi x_w\subset L_\xi x_w$.

For (ii), note first that the $B$-orbits in a given $P_\xi$-orbit at the orbits through its $T$-fixed points. By irreducibility of orbit-closures every $P_\xi$-orbit contains a unique open $B$-orbit, say $\ol{Bx_{w^1}}=\ol{P_\xi x_{w^1}}$. On the other hand, computing the tangent spaces in terms of roots, one sees that for any $w\in W$, $Bx_w$ is open in $P_\xi x_w$ if and only if $N_\xi^-$ fixes $x_w$. There is a unique such point in every closed $L_\xi$-orbit, hence there is a unique closed $L_\xi$-orbit in every $P_\xi$-orbit, and the $T$-fixed points in $P_\xi x_w$ form a single $W_\xi$-orbit. By its definition $w^1$ has maximal length in $W_\xi w^1$, equal to the dimension of $P_\xi w^1$. One has $l(ww^1)=l(w^1)-l(w)$ for $w\in W_\xi$. The longest element $w_{01}\in W_\xi$ defines $w_1=w_{01}w^1$ of length $l(w_1)=l(w^1)-n_\xi$. This yield the dimension formulae.

For (iii), recall that the embedding $\phi_\lambda:X\hookrightarrow \mP(V_\lambda)$ defined by the line bundle $\mc L_\lambda$ sends $x_w$ to the extreme weight vector $[v_{w\lambda}]$. The Hilbert-Mumford criterion brings us to consider the following partition of the Weyl group (determined for any pair $(\lambda,\xi)\in\Lambda\times \Gamma$):
\begin{gather}\label{For Partition Wpm0lambda}
\begin{array}{l}
W = W^+(\lambda,\xi) \sqcup W^0(\lambda,\xi) \sqcup W^-(\lambda,\xi) \quad,\\
\quad W^+(\lambda,\xi) = \{w\in W : w\lambda(\xi)>0\} \;,\\
\quad W^0(\lambda,\xi) = \{w\in W : w\lambda(\xi)=0\} \;,\\
\quad W^-(\lambda,\xi) = \{w\in W : w\lambda(\xi)<0\} \;.
\end{array}
\end{gather}
If $w,w'\in W$ are related by the Bruhat order as $w'\leq w$, then $w'\lambda(\xi)\geq w\lambda(\xi)$ for all $\xi\in \mk t_+$. Indeed,
The Bruhat order is defined by $w'\leq w$ if $x_{w'}\in\ol{Bx_w}\subset X$, and $w'< w$ holds if $w'\ne w$. The linear span of the Schubert variety
in $V_\lambda$ is the Demazure $B$-module $V_{B,w\lambda}$ whose weights are exactly the weights of $V_\lambda$ contained in $w\lambda +Q_+$.
Thus $w'\lambda=w\lambda+q$ for some sum of positive roots $q$. If $h\in i\mk t_+$, then $q(h)\geq0$ and hence $w'\lambda(h)\geq w\lambda(h)$. 

Consequently, if $w$ belongs to either $W^+$ or $W^+\cup W^0$, then so do all elements smaller than $w$. Hence it suffices to take Bruhat-maximal elements as indices for the union.
\end{proof}

\subsection{Compatible Weyl chambers and cubicles}

Returning to our embedding $\hat G\subset G$, we have now two notions of regularity on $\hat{\mk g}$, its intrinsic one, and the one induced by the embedding in $\mk g$. We shall use the subscript reg only for the $G$-notion and use $\hat G$-reg for the intrinsic notion on $\hat{\mk g}$ or whenever more precision is necessary. So, for instance, $\hat\Gamma^+_{\rm reg}$ denotes the set of $\hat B$-dominant, $G$-regular OPS of $\hat T$. Clearly $G$-regular implies $\hat G$-regular, but in general the converse implication does not hold. 
It holds if and only if any Weyl chamber $\hat{\mk t}_+$ is contained in a unique Weyl chamber $\mk t_+$,

The calculation of the Mumford function in Lemma \ref{Lemma OPSandParabOrbit}, (iii), concerns a weight $\lambda$ and an OPS $\xi$ dominant with respect to the same Weyl chamber in $\mk t$. However, for the $\hat G$-unstable loci of the line bundles given by $\lambda$ in a given $\Lambda^+$, we need to handle OPS from $\hat\Gamma^+$, which is not necessarily contained in $\Gamma^+$. To this end we follow Berenstein and Sjamaar, \cite{Beren-Sjam}, who introduced the following notions associated in general to a pair $\hat{\mk g}\subset\mk g$ of reductive complex Lie algebras with a fixed pair $\hat{\mk t}\subset\mk t$ of nested Cartan subalgebras. Two Weyl chambers $\hat{\mk t}_+$ and $\mk t_+$ are called {\it compatible} 
if $\dim_\R \hat{\mk t}_+\cap \mk t_+=\dim_\C\hat{\mk t}$.
Let us fix from now on a compatible pair of chambers $\hat{\mk t}_+$ and $\mk t_+$.
The Weyl chambers of $\mk t$ are parametrized by Weyl group elements,
and the chambers compatible with $\hat{\mk t}_+$ determine the following set
$$
W_{\rm com} = \{w\in W: \dim_R(\hat{\mk t}_+\cap w\mk t_+) = \dim_C\hat{\mk t}\} \;,
$$
called {\it the compatible Weyl set}. For $\sigma\in W_{\rm com}$, the cone
$$
\hat{\mk t}_\sigma = \hat{\mk t}_+\cap \sigma\mk t_+
$$
is called a {\it cubicle} in $\hat{\mk t}$. We have
$$
\hat{\mk t}_+=\bigcup\limits_{\sigma\in W_{\rm com}} \hat{\mk t}_\sigma \;.
$$
Berenstein and Sjamaar observed that the Weyl group
of the centralizer $Z_G(\hat T)$ acts on $W_{\rm com}$, and defined {\it the relative Weyl set}
$W_{\rm rel}$ to be the set of shortest representatives of the respective coset space. 

\begin{prop}
For every nonzero $\xi\in\hat\Gamma^+$ there exists a unique element in $W_{\rm rel}$, to be denoted by $\sigma_\xi$, such that $\sigma_\xi^{-1}\xi\in\Gamma^+$.
\end{prop}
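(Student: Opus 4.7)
My plan is to prove existence using the cubicle decomposition of $\hat{\mk t}_+$, and to prove uniqueness by combining the uniqueness of the dominant representative in a Weyl orbit with the defining property of $W_{\rm rel}$ as shortest representatives.

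For existence, I would start from the covering $\hat{\mk t}_+ = \bigcup_{\sigma \in W_{\rm com}} \hat{\mk t}_\sigma$. Since $\xi \in \hat\Gamma^+ \subset \hat{\mk t}_+$, some $\tau \in W_{\rm com}$ satisfies $\xi \in \hat{\mk t}_\tau = \hat{\mk t}_+ \cap \tau \mk t_+$, which by definition gives $\tau^{-1}\xi \in \mk t_+$. Since $W$ preserves the coweight lattice $\Gamma$, we obtain $\tau^{-1}\xi \in \mk t_+ \cap \Gamma = \Gamma^+$. To descend from $W_{\rm com}$ to $W_{\rm rel}$, I would use that every element $w$ of the Weyl group of $Z_G(\hat T)$ is represented by an element of $N_G(T) \cap Z_G(\hat T)$ and therefore fixes $\hat{\mk t}$ pointwise. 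Hence left multiplication by $w$ preserves the cubicle as a subset of $\hat{\mk t}$, since $\hat{\mk t}_{w\tau} = w(\hat{\mk t}_+ \cap \tau \mk t_+) = \hat{\mk t}_\tau$, and moreover $(w\tau)^{-1}\xi = \tau^{-1}w^{-1}\xi = \tau^{-1}\xi \in \Gamma^+$. Replacing $\tau$ by the unique shortest representative of its coset then yields $\sigma_\xi \in W_{\rm rel}$ with $\sigma_\xi^{-1}\xi \in \Gamma^+$.

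For uniqueness, suppose $\sigma, \sigma' \in W_{\rm rel}$ both satisfy $\sigma^{-1}\xi,\,\sigma'^{-1}\xi \in \Gamma^+$. Both are $G$-dominant and lie in the same $W$-orbit $W\xi$, so the standard uniqueness of the dominant representative in each Weyl orbit forces $\sigma^{-1}\xi = \sigma'^{-1}\xi$, whence $w := \sigma \sigma'^{-1}$ lies in the stabilizer $W_\xi$ of $\xi$ in $W$. The key step, which I expect to be the main obstacle, is to show that $w$ in fact lies in the Weyl group of $Z_G(\hat T)$: once this is established, $\sigma$ and $\sigma'$ represent the same coset and hence coincide as shortest representatives. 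The plan for this step is to argue that any root of $\mk g$ whose reflection belongs to $W_\xi$ and connects two chambers in $W_{\rm com}$ both containing $\xi$ must vanish identically on $\hat{\mk t}$. I would derive this from the compatibility condition $\dim_\R \hat{\mk t}_+ \cap \sigma \mk t_+ = \dim_\C \hat{\mk t}$ defining $W_{\rm com}$, together with the fact that $\xi \ne 0$ lies in the closed fundamental chamber of $\hat G$.
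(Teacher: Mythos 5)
The paper states this proposition without proof, so there is nothing to compare your argument against; judged on its own, your existence half is correct and complete. The covering $\hat{\mk t}_+=\bigcup_{\sigma\in W_{\rm com}}\hat{\mk t}_\sigma$ produces $\tau\in W_{\rm com}$ with $\tau^{-1}\xi\in\mk t_+\cap\Gamma=\Gamma^+$, and since every element of the Weyl group of $Z_G(\hat T)$ fixes $\hat{\mk t}$ pointwise, replacing $\tau$ by the shortest representative of its coset preserves this property.

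The uniqueness half cannot be completed, and the step you flag as the main obstacle is exactly where it breaks: it is \emph{false} that a root whose reflection stabilizes $\xi$ and relates two compatible chambers containing $\xi$ must vanish on $\hat{\mk t}$ -- and in fact the uniqueness assertion itself fails. Take $\hat G=SL_2\times SL_2\subset SL_4=G$ embedded block-diagonally, so that $\hat{\mk t}=\{(a,-a,b,-b)\}$, $\hat{\mk t}_+=\{a,b\geq 0\}$, and $Z_G(\hat T)=T$, whence $W_{\rm rel}=W_{\rm com}$. There are exactly two cubicles, $\{a\geq b\geq 0\}$ and $\{b\geq a\geq 0\}$, corresponding to the two distinct permutations $\sigma,\sigma'\in S_4$ with $\sigma^{-1}(v)=(v_1,v_3,v_4,v_2)$ and $\sigma'^{-1}(v)=(v_3,v_1,v_2,v_4)$. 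The element $\xi=(1,-1,1,-1)\in\hat\Gamma^+\setminus\{0\}$ lies on the common wall $a=b$ and satisfies $\sigma^{-1}\xi=\sigma'^{-1}\xi=(1,1,-1,-1)\in\Gamma^+$, with $\sigma\neq\sigma'$. Here $\sigma\sigma'^{-1}=s_{\epsilon_1-\epsilon_3}s_{\epsilon_2-\epsilon_4}$ is a nontrivial element of $W_\xi$ not contained in the (trivial) Weyl group of $Z_G(\hat T)$: the root $\epsilon_1-\epsilon_3$ kills $\xi$ but restricts to $a-b\not\equiv 0$ on $\hat{\mk t}$, so your proposed vanishing criterion does not hold. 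Since for $\sigma\in W_{\rm com}$ the condition $\sigma^{-1}\xi\in\Gamma^+$ is equivalent to $\xi\in\hat{\mk t}_\sigma$, what actually survives is: existence; uniqueness of the dominant representative $\sigma_\xi^{-1}\xi$; and uniqueness of $\sigma_\xi$ precisely when $\xi$ lies in a single cubicle (e.g.\ in the relative interior of one), whereas the ray generators $\xi_j\in\Xi_{\rm max}$ used throughout the paper typically lie on walls shared by several cubicles, as in the example above. Note that Definition \ref{Def xilengthofw} later says ``we fix a relative Weyl group element $\sigma_\xi$'', tacitly conceding that a choice is involved; the honest statement to prove is existence (as you do), with uniqueness either dropped or restricted to $\xi$ interior to a cubicle.
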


\begin{rem}
Under the hypothesis that $\hat G$ contains regular elements of $G$, the centralizer $Z_G(\hat T)$ equals the Cartan subgroup $T$ of $G$ and has trivial Weyl group $W_T(T)=\{1\}$. The compatible Weyl chambers are exactly those whose relative interiors intersect $\hat{\mk t}_+^\circ$. Hence we have
$$
W_{\rm com}=W_{\rm rel}=\{w\in W: \hat{\mk t}_+^\circ\cap w\mk t_+^\circ\ne\emptyset\} \;.
$$
Note that the Borel subgroups $B^\sigma$ for $\sigma\in W_{\rm com}$ contain the fixed $\hat B$, but they might not be all Borel subgroups of $G$ 
containing $\hat B$. More occur, for instance, for a root-subgroup $SL_2\subset SL_3$.
\end{rem}

Since $T=Z_G(\hat T)=Z_G(T)$ and $Z_G(\hat T)\subset N_G(\hat T)$ is a normal subgroup, we have $N_{\hat G}(\hat T)\subset N_G(\hat T)\subset N_G(T)$. This yields an inclusion
$$
{\rm j}:\hat W\subset W \;.
$$
The inclusion $\hat T\subset T$ is equivariant with respect to ${\rm j}$. There is a j-duality involution on $W$ given by $w\mapsto w^*={\rm j}(\hat w_0)w w_0$.
Lemma 2.4.3. in \cite{Beren-Sjam} states that $W_{\rm com}$ is stable under j-duality, and the cubicles are permuted by
$\hat{\mk t}_{\sigma^*}=-\hat w_0\hat{\mk t}_\sigma$ for $\sigma\in W_{\rm com}$.

Let $\sigma\in W_{\rm com}$ and $\lambda\in \Lambda^{++}$. Then $\iota^*(\sigma\lambda)\in\tilde\Lambda^+$ and hence $\sigma\in W^+(\lambda,\hat h)$
and ${\rm j}(\hat w_0)\sigma\in W^-(\lambda,\hat h)$ for any $\hat h\in\hat{\mk t}_+^\circ$. If $\hat h\in\hat{\mk t}_\sigma$, then $\sigma\lambda(\hat h)$
is the maximum of $W\lambda(\hat h)$.

\begin{defin}\label{Def xilengthofw}
For any $\xi\in\hat\Gamma^+\setminus\{0\}$ we fix a relative Weyl group element $\sigma_\xi\in W_{\rm rel}$ such that $\xi$ belongs to the cubicle $\hat{\mk t}_{\sigma_\xi}$. We denote $B_\xi=L_\xi\cap B^{\sigma_{\xi}}$, this is a Borel subgroup of $L_\xi$ such that $\hat B_\xi=B_\xi\cap \hat L_\xi$ is a Borel subgroup of $\hat L_\xi$. We denote by $l_\xi(w)=\dim B^{\sigma_\xi}x_{w\sigma_\xi}$ the $B^{\sigma_\xi}$-length of $w\in W$. We consider the left cosets of the stabilizer $W_\xi$ in $W$ and we denote by $^\xi W\subset W$ the set of $B^{\sigma_\xi}$-shortest representatives of the cosets. The set $^\xi W$ parametrizes the $B_\xi$-fixed points in $X$ and thus the closed $L_\xi$-orbits, $X^{B_\xi}=\{ x_{w\sigma_\xi} : w\in {^\xi W} \}$.

Given $\lambda\in\Lambda^+$, we denote, analogously to (\ref{For Partition Wpm0lambda}),
\begin{gather*}
\begin{array}{l}
W = W^+(\sigma_\xi\lambda,\xi) \sqcup W^0(\sigma_\xi\lambda,\xi) \sqcup W^-(\sigma_\xi\lambda,\xi) \quad,\\
W^+(\lambda,\xi) = \{w\in W : w\sigma_\xi\lambda(\xi)>0\} \;,\\
W^0(\lambda,\xi) = \{w\in W : w\sigma_\xi\lambda(\xi)=0\} \;,\\
W^-(\lambda,\xi) = \{w\in W : w\sigma_\xi\lambda(\xi)<0\} \;.\\
l_{\xi,\lambda}^+ = \max\{l_\xi(w): w\in {^\xi W}(\sigma_\xi\lambda,\xi)\} \;.
\end{array}
\end{gather*}
\end{defin}

\begin{rem}\label{Rem SingleCubicle}
The situation is somewhat simpler with regard of calculations and notation whenever it suffices to consider one cubicle, which means that any Weyl chamber of $\hat G$ is contained in some Weyl chamber of $G$, listed as property (a) below. It is useful to notice that this property is preserved for the Levi subgroups. More generally, consider following the properties (note that (d) $\Lw$ (c) $\Lw$ (a)+(b)):

(a) there exists a Weyl chamber of $G$ containing any given Weyl chamber of $\hat G$, i.e., $\hat\Gamma^+\subset\Gamma^+$, or equivalently $W_{\rm rel}=\{1\}$;

(b) containing regular elements;

(c) $\hat\Gamma^{++}\subset\Gamma^{++}$, i.e., $W_{\rm com}=\{1\}$ and there is a single cubicle;

(d) being a diagonal embedding of a semisimple group in a Cartesian power.

The definitions immediately imply the following:
\begin{enumerate}
\item[1)] If $\hat G\subset G$ has some of the properties (a),(b),(c),(d), then the same properties also hold for the natural embedding $\hat L'_\xi\subset L'_\xi$ of the semisimple parts of the centralizers of any semisimple element $\xi \in \hat{\mk g}$.
\item[2)] Suppose that $G_1\subset G_2\subset G$ is a chain of embeddings. The embedding $G_1\subset G$ has some of the properties (a),(b),(c),(d), if and only if the same properties hold for both embeddings $G_1\subset G_2$ and $G_2\subset G$. 
\end{enumerate}

Property (c) means, that the intrinsic notion of regularity for one-parameter subgroups of $\hat G$ coincides with that induced from the embedding in $G$, as mentioned above. Examples where this property is fulfilled are:

$\bullet$ diagonal embeddings $\hat G\subset G=\hat G^{\times k}$;

$\bullet$ $SL_2\subset SL_3$ given by any root;

$\bullet$ principal $SL_2$-subgroups $SL_2 \to G$ (characterized by having a single closed orbit in $G/B$);

$\bullet$ subgroup containing principal $SL_2$-subgroups $SL_2\to \hat G\subset G$, for instance $Sp_{2\ell}\subset SL_{2\ell}$ and $SO_{2\ell+1}\subset SL_{2\ell+1}$.\\
\end{rem}

\subsection{A formula for the unstable locus}

\begin{lemma}\label{Lemma XimaxisXicum}
Let $\mk P=\{P_\xi\subset G: \xi\in\hat\Gamma^+\setminus\{0\}\}$ denote the set of parabolic subgroups of $G$ defined by nonzero dominant OPS of $\hat G$. Let $\mk P_{\rm max}$ denote the set of maximal elements of $\mk P$. Let $\{\xi_1,...,\xi_q\}\subset\hat\Gamma^+$ denote the set of elements obtained as integral generators of rays of cubicles, and let $P_j=P_{\xi_j}$. Then $\mk P_{\rm max}=\{P_1,...,P_q\}$.
\end{lemma}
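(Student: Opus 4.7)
The plan is to reduce the problem to face combinatorics by showing that the correspondence $\xi \mapsto P_\xi$ is constant on the relative interior of each face of the cubicle decomposition of $\hat{\mk t}_+$, and is strictly inclusion-reversing under passage to a proper subface. This will make the poset $(\mk P,\subseteq)$ anti-isomorphic to the poset of nonzero faces of the cubicle decomposition, so that $\mk P_{\rm max}$ corresponds to minimal nonzero faces, which are exactly the rays.

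First I would note that $\mk p_\xi = \mk t \oplus \bigoplus_{\alpha \in \Phi(\xi)} \mk g_\alpha$, where $\Phi(\xi) = \{\alpha \in \Delta : \alpha(\xi) \geq 0\}$; thus $P_\xi$ is determined by the sign pattern $(\mathrm{sign}\,\alpha(\xi))_{\alpha \in \Delta}$, which is constant on the relative interior of each cell of the hyperplane arrangement $\mc A = \{\hat{\mk t} \cap \ker\alpha : \alpha \in \Delta\}$ in $\hat{\mk t}$. I would then check that the decomposition of $\hat{\mk t}_+$ induced by $\mc A$ is the cubicle decomposition: the cubicles $\hat{\mk t}_\sigma = \hat{\mk t}_+ \cap \sigma\mk t_+$ are by definition cut out by hyperplanes of the form $\sigma\ker\alpha$, while the walls of $\hat{\mk t}_+$ itself have the form $\ker\hat\alpha = \hat{\mk t}\cap\ker\tilde\alpha$ for some $\tilde\alpha\in\Delta$ with $\tilde\alpha|_{\hat{\mk t}} = \hat\alpha$, since every $\hat T$-weight of $\mk g$ is the restriction of a $T$-weight, and the $\hat G$-roots appear among these restrictions via $\hat{\mk g}\subseteq\mk g$.

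Second, I would verify strict monotonicity. Let $F_2 \subsetneq \overline{F_1}$ be faces of the cubicle decomposition and pick $\xi_i$ in the relative interior of $F_i$. Continuity of $\alpha$ yields $\Phi(\xi_1) \subseteq \Phi(\xi_2)$ and hence $P_{\xi_1} \subseteq P_{\xi_2}$. For strictness, there exists $\alpha \in \Delta$ with $F_2 \subset \ker\alpha$ but $F_1 \not\subset \ker\alpha$, so $\alpha(\xi_1) \neq 0 = \alpha(\xi_2)$; since $\Delta = -\Delta$, whichever of $\pm\alpha$ is negative on $\xi_1$ lies in $\Phi(\xi_2) \setminus \Phi(\xi_1)$, giving $P_{\xi_1} \subsetneq P_{\xi_2}$.

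Combining the two steps, $\mk P_{\rm max}$ is in bijection with the set of rays of the cubicle decomposition. Since cubicles are rational polyhedral cones, each ray meets $\hat\Gamma$ in a unique indivisible dominant one-parameter subgroup $\xi_j \in \hat\Gamma^+$, yielding the list $\{P_1,\dots,P_q\}$. The main potential obstacle is the identification of the $\mc A$-induced decomposition of $\hat{\mk t}_+$ with the cubicle decomposition; once the walls of $\hat{\mk t}_+$ are seen to be absorbed into $\mc A$ as above, the remainder is the elementary combinatorics of a polyhedral fan.
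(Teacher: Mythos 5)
Your reduction to the fan combinatorics of the root--hyperplane arrangement $\mc A$ on $\hat{\mk t}_+$ is a legitimate and more global route than the paper's, which instead works cubicle by cubicle, using $P_{\xi+\eta}\subseteq P_\xi\cap P_\eta$ for $\xi,\eta$ in a common Weyl chamber of $G$ and the fact that $P_\xi\supseteq B^\sigma$ characterizes $\xi\in\hat{\mk t}_\sigma$. Your first two steps are sound: the identification of the cubicle decomposition with the $\mc A$-induced decomposition (the key point being that the walls of $\hat{\mk t}_+$ are traces of $G$-root hyperplanes), and the strict anti-monotonicity $F_2\subsetneq\ol{F_1}\Rightarrow P_{\xi_1}\subsetneq P_{\xi_2}$. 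The gap is in the final step: strict anti-monotonicity along the face order does not by itself make $F\mapsto P_F$ a poset anti-isomorphism, because it says nothing about possible containments $P_F\subseteq P_{F'}$ between parabolics attached to faces that are not nested. Concretely, what you have actually proved is only the inclusion $\mk P_{\rm max}\subseteq\{P_1,\dots,P_q\}$ (every face of dimension $\geq 2$ has a ray as a proper subface, so its parabolic is strictly contained in another element of $\mk P$); you have not shown that each $P_j$ is maximal, i.e.\ you have not excluded $P_{\xi_i}\subsetneq P_\eta$ for some $\eta\in\hat\Gamma^+\setminus\{0\}$ lying in a face that does not contain the ray $\R_{\geq 0}\xi_i$ --- for instance another ray. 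That is the other half of the asserted set equality.

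The missing half does follow from the sign-pattern description you already set up, so the gap is easy to fill. If $P_{\xi_j}\subseteq P_\eta$, then $\Phi(\xi_j)\subseteq\Phi(\eta)$; applying this to both $\alpha$ and $-\alpha$ shows that every root vanishing on $\xi_j$ vanishes on $\eta$, so $\eta$ lies in $L=\bigcap\{\hat{\mk t}\cap\ker\alpha:\alpha(\xi_j)=0\}$. The open cell of $\mc A$ containing $\xi_j$ is a nonempty open subset of $L$, and it is the relative interior of the one-dimensional face $\R_{\geq 0}\xi_j$; hence $L=\R\,\xi_j$. Since $\hat{\mk t}_+$ is a pointed cone ($\hat G$ being semisimple) and $\eta\in\hat\Gamma^+\setminus\{0\}$, this forces $\eta\in\R_{>0}\xi_j$ and thus $P_\eta=P_{\xi_j}$. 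This supplement in fact yields the full anti-isomorphism you claimed ($P_\xi\subseteq P_\eta$ if and only if $\eta\in\ol{F_\xi}$), after which your conclusion is correct.
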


\begin{proof}
Recall that the parabolic subgroups of $G$ containing $T$ are determined by their (simple) roots. Let us fix a cubicle and denote $\hat\Gamma^+_{\sigma}=\hat\Gamma^+\cap\hat{\mk t}_{\sigma}$. For any two elements $\xi,\eta\in \hat\Gamma^+_{\sigma}$ belonging to a fixed cubicle, and hence to the same Weyl chamber of $G$, we have $P_{\xi+\eta}\subset P_{\xi}\cap P_{\eta}$. Hence the parabolic subgroups defined by the generating rays of the cone $\hat{\mk t}_\sigma$ are the maximal elements in the set of parabolic subgroups defined by $\xi\in\hat\Gamma^+_{\sigma}$. The elements of $\mk P$ defined by elements of $\hat\Gamma_\sigma^+$ are characterized by the property that of containing the Borel subgroup $B^\sigma$. Hence, if $\xi\notin\hat\Gamma_\sigma^+$, then $P_\xi$ does not contain $P_\eta$ with $\eta\in\hat\Gamma_\sigma^+$. Since every element of $\hat\Gamma^+$ is contained in some cubicle, we obtain that the maximal elements of $\mk P$ are exactly these defined by rays of cubicles.
\end{proof}

We denote $\Xi_{\rm max}=\{\xi_1,...,\xi_q\}$ and $\sigma_j=\sigma_{\xi_j}$.

\begin{thm}
For any $\lambda\in\Lambda^{++}$ the $\hat G$-unstable locus in $X=G/B$ can be written as
$$
X^{us}(\lambda) = \hat G \bigcup\limits_{j=1}^q X^{us}_{\xi_j}(\lambda) = \hat G \bigcup\limits_{j=1}^q \bigsqcup\limits_{w\in {^{\xi_j}W}: w\sigma_j\lambda(\xi_j)>0} P_j x_{w\sigma_j} 
$$
The codimensions of the $\xi_j$-unstable locus and the $\hat G$-unstable locus are bounded from below by
\begin{gather*}
\begin{array}{l}
{\rm codim}_X X_{\xi_j}^{us}(\lambda) \geq r_j-\hat r_j - l_{j,\lambda}^+ \\
{\rm codim}_X X^{us}(\lambda) \geq \min\limits_j \{r_j-\hat r_j - l_{j,\lambda}^+\} \;, 
\end{array}
\end{gather*}
where $l_{j,\lambda}^+ = l_{\xi_j,\lambda}^+$ is the number given in Definition \ref{Def xilengthofw}
\end{thm}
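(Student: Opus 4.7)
The plan is to combine the Hilbert-Mumford criterion with Lemmas \ref{Lemma XimaxisXicum} and \ref{Lemma OPSandParabOrbit}: first reduce the Hilbert-Mumford union over all $\xi \in \hat\Gamma^+\setminus\{0\}$ to the finite family $\xi_1,\dots,\xi_q$, then apply the parabolic-orbit description of each $X^{us}_{\xi_j}(\lambda)$, and finally bound dimensions via the natural fibre-bundle structure over $\hat G/\hat P_j$.

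By the Hilbert-Mumford criterion, $X^{us}(\lambda) = \hat G \bigcup_{\xi\in\hat\Gamma^+\setminus\{0\}} X^{us}_\xi(\lambda)$, so it suffices to show $X^{us}_\xi(\lambda) \subseteq \bigcup_j X^{us}_{\xi_j}(\lambda)$ for every such $\xi$. Let $\hat{\mk t}_{\sigma_\xi}$ be the cubicle containing $\xi$, and write $\xi = \sum_i c_i \xi_{j_i}$ with $c_i > 0$, where the $\xi_{j_i}$ are the rays of that cubicle; then $P_\xi \subseteq P_{\xi_{j_i}}$ for each $i$ by the proof of Lemma \ref{Lemma XimaxisXicum}, and $\sigma_{\xi_{j_i}} = \sigma_\xi$ since all these vectors lie in the same cubicle. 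Applying Lemma \ref{Lemma OPSandParabOrbit}(iii) with the Borel $B^{\sigma_\xi}$ (in which both $\xi$ and $\sigma_\xi\lambda$ are dominant) gives
\[
X^{us}_\xi(\lambda) = \bigsqcup_{w \in {}^\xi W,\ w\sigma_\xi\lambda(\xi)>0} P_\xi x_{w\sigma_\xi}.
\]
For each such $w$, the identity $w\sigma_\xi\lambda(\xi) = \sum_i c_i w\sigma_\xi\lambda(\xi_{j_i})$ forces $w\sigma_\xi\lambda(\xi_{j_i}) > 0$ for some $i$, so $M^{\xi_{j_i}}(x_{w\sigma_\xi}) > 0$. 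Since $M^{\xi_{j_i}}$ is constant on $P_{\xi_{j_i}}$-orbits and $P_\xi \subseteq P_{\xi_{j_i}}$, this yields $P_\xi x_{w\sigma_\xi} \subseteq P_{\xi_{j_i}} x_{w\sigma_\xi} \subseteq X^{us}_{\xi_{j_i}}(\lambda)$, establishing the first equality of the theorem. The second equality is then the same application of Lemma \ref{Lemma OPSandParabOrbit}(iii) to each individual $\xi_j$ in the chamber $\sigma_j \mk t_+$.

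For the dimension bounds, the inclusion $\hat P_j \subseteq P_j$ means $\hat P_j$ preserves the $P_j$-orbit $P_j x_{w\sigma_j}$, so $\hat G \cdot P_j x_{w\sigma_j}$ is the image of the natural map $\hat G \times_{\hat P_j} P_j x_{w\sigma_j} \to X$. Hence
\[
\dim \hat G P_j x_{w\sigma_j} \leq \hat r_j + \dim P_j x_{w\sigma_j}.
\]
By Lemma \ref{Lemma OPSandParabOrbit}(ii) applied with $B^{\sigma_j}$, $\dim P_j x_{w\sigma_j} = n_j + l_{\xi_j}(w) \leq n_j + l^+_{j,\lambda}$ for the $w$'s that appear. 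Using $\dim X = r_j + n_j$ then yields ${\rm codim}_X X^{us}_{\xi_j}(\lambda) \geq r_j - \hat r_j - l^+_{j,\lambda}$, and taking the minimum over $j$ gives the bound for $X^{us}(\lambda)$.

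The hard part is the reduction step: one must convert positivity of $M^\xi$ at the $T$-fixed point $x_{w\sigma_\xi}$ into positivity of some $M^{\xi_{j_i}}$ there (which comes for free from the linear expansion in the cubicle basis), and then propagate it to the enclosing $P_{\xi_{j_i}}$-orbit via the containment $P_\xi \subseteq P_{\xi_{j_i}}$. The Berenstein-Sjamaar cubicle decomposition is precisely what makes this propagation coherent across all of $\hat\Gamma^+$; once in place, the remaining computations reduce to standard Schubert-variety dimension formulas and the generic fibre-bundle estimate.
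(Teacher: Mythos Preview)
Your proof is correct and follows essentially the same route as the paper's: both reduce the Hilbert--Mumford union to the cubicle generators $\xi_1,\dots,\xi_q$ via Lemma~\ref{Lemma XimaxisXicum} and the linear expansion $\xi=\sum c_i\xi_{j_i}$, then invoke Lemma~\ref{Lemma OPSandParabOrbit} for the parabolic-orbit description and the fibre-bundle map $\hat G\times_{\hat P_j}P_jx\to \hat GP_jx$ for the dimension bound. One minor remark: the assertion $\sigma_{\xi_{j_i}}=\sigma_\xi$ need not hold literally for rays shared by several cubicles (Definition~\ref{Def xilengthofw} fixes one choice of $\sigma$ for each $\xi$), but this does not affect your argument, since the inclusion $P_\xi x_{w\sigma_\xi}\subset X^{us}_{\xi_{j_i}}(\lambda)$ follows directly from $M^{\xi_{j_i}}(x_{w\sigma_\xi})>0$ and the constancy of $M^{\xi_{j_i}}$ on $P_{\xi_{j_i}}$-orbits, independent of which $\sigma$ is attached to $\xi_{j_i}$.
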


\begin{rem}
When a Weyl chamber of $\hat G$ is contained in a Weyl chamber of $G$ and we have the set $\Xi_{\rm max}$ consists simply of the fundamental coweights of $\hat G$, i.e., the generators of $\hat\Gamma^+$.
\end{rem}

\begin{proof}
From the Hilbert-Mumford criterion, we know that the $\hat G$-unstable locus is the $\hat G$-saturation of the union of the unstable loci for dominant OPS of $\hat G$. To reduce the instability with respect to an arbitrary $\xi\in\hat\Gamma^+$ to instability with respect to one of the $\xi_j$'s we shall use Lemma \ref{Lemma XimaxisXicum}. Let us take some $\xi\in\hat\Gamma^+\setminus\{0\}$ and $x\in X_{\xi}^{us}(\lambda)$. The $\xi$-unstable locus is described by Lemma \ref{Lemma OPSandParabOrbit}, and we conclude that $x\in P_\xi x_{w}$ for some $w\in W$ such that $w\lambda(\xi)>0$. The element $\xi$ belongs to some cubicle $\hat\Gamma_\sigma^+$ and can be expressed as a linear combination of the generators of this cubicle, say $\xi_1,...,\xi_p$, with nonnegative coefficients. We can deduce that $w\lambda(\xi_j)>0$ for some $j\in\{1,...,p\}$ for which the coefficient of $\xi$ is nonzero. Hence $P_\xi\subset P_j$ and we have $x\in P_\xi x_w\subset P_j x_w \subset X_{\xi_j}^{us}(\lambda)$. This proves the first formula for $X^{us}(\lambda)$. The second formula is deduced directly from Lemma \ref{Lemma OPSandParabOrbit} and Definition \ref{Def xilengthofw}.

The bound on the codimension follows from the standard fact that, for any $\xi\in\hat\Gamma^+$, the parabolic subgroup $\hat P_{\xi}\subset\hat G$ satisfies $\hat P_{\xi}=P_\xi\cap\hat G$, hence it acts every $P_\xi$-orbit $P_\xi x$, and we have a surjective map
$$
\hat G \times_{\hat P_\xi} P_\xi x \to \hat GP_\xi x \;.
$$
The dimension of the fibre bundle is $\hat r_j+\dim P_\xi x$ and hence this number bounds the dimension of $\hat GP_\xi x$ from above. We take a maximal dimensional $P_\xi x_{w\sigma_\xi}$ inside $X_{\xi}^{us}(\lambda)$, with $w\in {^\xi W}$. We may apply the codimension formula of Lemma \ref{Lemma OPSandParabOrbit}, part (ii), to the length function $l_\xi$ referring to the cubicle of $\xi$ (see Definition \ref{Def xilengthofw}). We obtain ${\rm codim}_X P_\xi x_{w\sigma_\xi} = r_\xi - l_\xi(w)=r_\xi-l_{\xi,\lambda}^+$. This completes the proof.
\end{proof}

We can deduce from the above proof the following expression for the $\hat T$-unstable locus. Note that unstable loci for tori are just a particular case of the above theorem, or, as well of the corollary. What we state below concerns Cartan subgroups of reductive groups and uses the Weyl group action on $X_{\hat T}^{us}(\lambda)$.

\begin{cor}\label{Cor hatT-unstablelocus}
The $\hat T$-unstable locus for a given $\lambda^{++}$ is given by
$$
X_{\hat T}^{us}(\lambda) = \bigcup\limits_{\hat w\in\hat W} \bigcup\limits_{j=1}^q X^{us}_{\hat w\xi_j}(\lambda) \;.
$$
Its codimension is given by
$$
{\rm codim}_X X_{\hat T}^{us}(\lambda) = \min\{r_j-l_{j,\lambda}^+: j=1,...,q\} \;.
$$
\end{cor}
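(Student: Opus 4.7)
The plan is to treat this as a direct specialization of the preceding theorem to the case $\hat G=\hat T$, exploiting two simplifications: since $\hat T$ is abelian the Hilbert--Mumford criterion requires no further $\hat T$-saturation, and since $\hat T$ has no intrinsic Weyl-chamber structure every OPS $\xi\in\hat\Gamma$ must be used, not merely the dominant ones. Concretely, start from $X_{\hat T}^{us}(\lambda)=\bigcup_{\xi\in\hat\Gamma}X_\xi^{us}(\lambda)$ and partition $\hat\Gamma=\bigcup_{\hat w\in\hat W}\hat w\cdot\hat\Gamma^+$ using the ambient Weyl group of $\hat G$. Combining this with the equivariance $X^{us}_{\hat w\xi}(\lambda)=\hat w\cdot X^{us}_\xi(\lambda)$ (which is immediate from the fact that $0\in V_\lambda$ is fixed by $\hat w$ and $\exp(t\,\hat w\xi)=\hat w\exp(t\xi)\hat w^{-1}$) reduces the problem to analyzing $X^{us}_\xi(\lambda)$ for $\xi\in\hat\Gamma^+$.

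For the first formula, I would then reuse verbatim the cubicle reduction from the proof of the main theorem: any $\xi\in\hat\Gamma^+$ lies in some cubicle $\hat{\mk t}_\sigma$ and is a nonnegative combination of the generators $\xi_j$ of that cubicle; if $x\in X^{us}_\xi(\lambda)$ sits in $P_\xi x_{w\sigma}$ with $w\sigma\lambda(\xi)>0$, then at least one $\xi_j$ entering with a positive coefficient must satisfy $w\sigma\lambda(\xi_j)>0$, and the chain $P_\xi\subseteq P_j$ yields $x\in P_j x_{w\sigma}\subseteq X^{us}_{\xi_j}(\lambda)$. Thus $\bigcup_{\xi\in\hat\Gamma^+}X^{us}_\xi(\lambda)\subseteq\bigcup_j X^{us}_{\xi_j}(\lambda)$, and the reverse inclusion into $X^{us}_{\hat T}(\lambda)$ is tautological because each $\hat w\xi_j$ is an OPS of $\hat T$.

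For the codimension formula, the $\hat W$-action is a biholomorphism of $X$, so $\dim X^{us}_{\hat w\xi_j}(\lambda)=\dim X^{us}_{\xi_j}(\lambda)$, and the codimension of a finite union equals the minimum codimension of its members. Since no outer $\hat G$-saturation is present, this minimum is attained already on the summands: applying Lemma \ref{Lemma OPSandParabOrbit}(ii)--(iii) inside the cubicle $\hat{\mk t}_{\sigma_j}$ (i.e.\ with the Borel $B^{\sigma_j}$, with respect to which $\xi_j$ is $G$-dominant), the locus $X^{us}_{\xi_j}(\lambda)$ is a disjoint union of $P_j$-orbits $P_j x_{w\sigma_j}$ indexed by $w\in{}^{\xi_j}W$ with $w\sigma_j\lambda(\xi_j)>0$, each of codimension $r_j-l_{\xi_j}(w)$. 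Maximizing $l_{\xi_j}(w)$ over this set gives $l_{j,\lambda}^+$ by Definition \ref{Def xilengthofw}, so ${\rm codim}_X X^{us}_{\xi_j}(\lambda)=r_j-l_{j,\lambda}^+$, and taking the minimum over $j$ yields the claim.

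The only real bookkeeping concern is checking that the length function $l_{\xi_j}$, which is tuned to $B^{\sigma_j}$ rather than to our fixed $B$, is exactly what appears when one transports Lemma \ref{Lemma OPSandParabOrbit} to an arbitrary cubicle; this is built into Definition \ref{Def xilengthofw} and was already used in the main theorem, so no new argument is needed. I expect the proof to be short, essentially a reading of the main theorem with $\hat G$ replaced by $\hat T$ and the $\hat G$-saturation dropped.
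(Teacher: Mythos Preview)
Your proposal is correct and follows essentially the same approach as the paper: specialize the preceding theorem to the torus $\hat T$, use the $\hat W$-action (which the paper explicitly mentions as the key additional ingredient) to reduce from all OPS in $\hat\Gamma$ to the $\hat W$-orbit of $\hat\Gamma^+$, and then invoke the cubicle reduction to pass to the $\xi_j$. The paper gives only a one-line sketch, so your version simply spells out the details the authors left implicit; no new ideas are needed beyond those in the main theorem and Lemma \ref{Lemma OPSandParabOrbit}.
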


We can also deduce the following expression for the $\hat G$-unstable locus in $X=G/B$ as a saturation of a union of Schubert cells.

\begin{cor}
Let $\hat G\subset G$ be a reductive subgroup of a semisimple group $G$ and let $\lambda\in\Lambda^{++}$ be a strictly dominant weight, defining an ample line bundle on $X=G/B$. Then the $\hat G$-unstable locus $X^{us}(\lambda)$ is the $\hat G$-saturation of the unstable Schubert varieties for the Borel subgroups $B^{\sigma}$ with $\sigma\in W_{\rm rel}$:
$$
X^{us}(\lambda) = \hat G \bigcup\limits_{(w,\sigma)\in W_{\rm rel}\times W: w\sigma\lambda(\hat\Gamma^+)\cap\Z_+\ne\emptyset} B^{\sigma} x_{w\sigma}
$$
In particular, if $W_{\rm rel}=\{1\}$ it suffices to consider a single Borel subgroup and one has
\begin{gather}\label{For SingCubiKirwanStrat}
X^{us}_{\hat G}(\lambda) = \bigcup_{w\in W\setminus W^{0-}(\lambda)} \hat G Bx_{w} \quad, 
\end{gather}
where
\begin{align*}
& W^\pm(\lambda)=\{w\in W: w\lambda(\hat{\mk t}_+^\circ)\subset \R_\pm \} \\
& W^{0\pm}(\lambda)=\{w\in W: w\lambda(\hat{\mk t}_{\pm}^\circ)\subset \R_{\geq 0} \} \;.
\end{align*}
\end{cor}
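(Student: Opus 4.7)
The plan is to combine the Hilbert-Mumford criterion with Lemma \ref{Lemma OPSandParabOrbit}(iii), applied to the Borel $B^\sigma$ adapted to each cubicle. For any $\xi\in\hat\Gamma^+\setminus\{0\}$ lying in the cubicle $\hat{\mk t}_{\sigma_\xi}$ with $\sigma_\xi\in W_{\rm rel}$, one has $\sigma_\xi^{-1}\xi\in\Gamma^+$, so $\xi$ is $G$-dominant with respect to $B^{\sigma_\xi}$. Since Lemma \ref{Lemma OPSandParabOrbit} is valid for any Borel containing $T$ relative to which $\xi$ is dominant, applying it with $B^{\sigma_\xi}$ in place of $B$ realizes $X^{us}_\xi(\lambda)$ as a disjoint union of $P_\xi$-orbits through the $T$-fixed points $x_u$ with $u\lambda(\xi)=M^\xi(x_u)>0$. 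Because $B^{\sigma_\xi}\subset P_\xi$ and the Mumford value is constant on the $W_\xi$-orbit of $T$-fixed points inside a given $P_\xi$-orbit (using that $\xi$ is central in $\mk l_\xi$), decomposing each $P_\xi$-orbit into $B^{\sigma_\xi}$-orbits gives
$$
X^{us}_\xi(\lambda)\;=\;\bigcup_{u\in W:\; u\lambda(\xi)>0} B^{\sigma_\xi} x_u\;.
$$

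Next, I would take the $\hat G$-saturation and the union over all $\xi\in\hat\Gamma^+\setminus\{0\}$, regrouping by the cubicle containing $\xi$, so that the outer parameter $\sigma=\sigma_\xi$ ranges over $W_{\rm rel}$. Writing every $T$-fixed point as $x_{w\sigma}$ for some $w\in W$, the condition that $x_{w\sigma}$ be $\xi$-unstable for some $\xi\in\hat\Gamma^+\cap\hat{\mk t}_\sigma$ becomes precisely $w\sigma\lambda(\hat\Gamma^+)\cap\Z_+\ne\emptyset$; here density of $\hat\Gamma^+$ in $\hat{\mk t}_+$ together with the linearity of $\mu\mapsto\mu(\xi)$ lets us phrase the positivity condition using the cone rather than individual $\xi$'s. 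Combining this with the Hilbert-Mumford criterion gives the displayed formula.

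For the single-cubicle case $W_{\rm rel}=\{1\}$ we deal with a single Borel $B$, and the condition $w\lambda(\hat\Gamma^+)\cap\Z_+=\emptyset$ amounts to $w\lambda(\hat{\mk t}_+^\circ)\subseteq\R_{\leq 0}$, i.e., $w\in W^{0-}(\lambda)$ in the notation of the corollary. Hence the excluded Schubert cells are exactly those indexed by $W^{0-}(\lambda)$, yielding equation (\ref{For SingCubiKirwanStrat}).

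The main technical subtlety is indexing: Lemma \ref{Lemma OPSandParabOrbit} is tied to a choice of Borel through ${^\xi W}$ and the associated length function, so one must transfer the statement between the different Borels $B^\sigma$ via conjugation by $\sigma$. Once one works directly with the intrinsic Mumford values $u\lambda(\xi)$ at $T$-fixed points, this bookkeeping dissolves and the various cubicle contributions assemble cleanly into the claimed $\hat G$-saturation.
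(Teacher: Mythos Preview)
Your proposal is correct and follows essentially the same approach as the paper: Hilbert--Mumford reduces to dominant OPS of $\hat G$, each such $\xi$ lies in a cubicle $\hat{\mk t}_\sigma$, and Lemma \ref{Lemma OPSandParabOrbit} (applied relative to $B^\sigma$) expresses $X^{us}_\xi(\lambda)$ as a union of $B^\sigma$-orbits. The paper's own proof is a three-sentence sketch of exactly this argument; you simply spell out more of the bookkeeping (constancy of $M^\xi$ on $W_\xi$-orbits, the passage from $P_\xi$-orbits to $B^{\sigma_\xi}$-orbits, and the single-cubicle specialization).
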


\begin{proof}
The Hilbert-Mumford criterion reduces the instability for $\hat G$ to instability of its dominant OPS, i.e., elements of $\hat\Gamma^+$. Every such OPS is contained in some cubicle with a corresponding $\sigma\in W_{\rm rel}$. By Lemma \ref{Lemma OPSandParabOrbit} the unstable locus of $\xi\in\hat\Gamma^+\cap\hat{\mk t}_\sigma$ is a union of $B^\sigma$-orbits.
\end{proof}

\section{The Kirwan-Ness stratification of the unstable locus}

To compute the dimensions of the unstable loci, we shall use the Kirwan-Ness stratification (Theorem \ref{Theo Kirwan-Ness}) applied to the case $X=G/B$ embedded in $\mP(V_\lambda)$ by the ample line bundle corresponding to some $\lambda\in\Lambda^{++}$. The one-parameter subgroups of any subgroup $\hat G\subset G$ are also one-parameter subgroups of $G$, and Lemma \ref{Lemma OPSandParabOrbit} provides a description of the resulting blades as orbits of parabolic subgroups of $G$. The lemma concerns $\xi\in\Gamma$ dominant with respect to the same Weyl chamber as $\lambda$. The set $\hat\Gamma^+$ of dominant OPS of $\hat G$ is partitioned by the cubicles, and for $\xi\in\hat\Gamma^+\cap\hat{\mk t}_\sigma$ we obtain
\begin{gather}\label{For BladesInFlags}
X^{\xi,m}(\lambda) = \bigsqcup\limits_{w\in {^\xi W}:w\sigma\lambda(\xi)=m} L_\xi x_{w\sigma}  \;,\quad X_{\xi,m}(\lambda) = \bigsqcup\limits_{w\in {^\xi W}:w\sigma\lambda(\xi)=m} P_\xi x_{w\sigma} \;.
\end{gather}

It remains to determine the stratifying pairs $\xi,m$. According to the stratification theorem, we consider the OPS determined by averaging weights of $\hat T$-fixed points in $X^{\xi,m}(\lambda)$. In our case the set of $\hat T$-weights is the projection of the set of $T$-weights, which in turn is a union of $W_\xi$-orbits:
\begin{gather}\label{For BladesInFlags-Weights}
St_T(X^{\xi,m}(\lambda)) = \bigsqcup\limits_{w\in {^\xi W}: w\sigma\lambda(\xi)=m} W_\xi w\sigma\lambda  \;,\quad St_{\hat T}(X^{\xi,m}(\lambda))=\iota^*(St_T(X^{\xi,m}(\lambda))) \;.
\end{gather}
The stratification theorem and the above observations bring us to the following definitions.

\begin{defin}\label{Def OPSforLeviOrbits XiWlambda}

(1) Let $\mk L=\{L_\xi : \xi\in\hat\Gamma^+\setminus\{0\}\}$ be the set of centralizers in $G$ of nonzero dominant one-parameter subgroups of $\hat T$.

(2) For any triple $(L,w,\lambda)\in {\mk L}\times W \times \Lambda$, let $\nu_{L,w,\lambda}\in\hat \Lambda_\R$ denote the closest to $0$ point in ${\rm Conv}(\iota^*(W_Lw\lambda))$. If $\nu_{L,w,\lambda}\ne 0$, let $\xi_{L,w,\lambda}\in\hat\Gamma$ be the indivisible integral generator of the ray in $\hat{\mk{t}}$ corresponding, under the Killing form, to the ray of $\nu_{L,w,\lambda}$ in $\hat\Lambda_\R$. In case $\nu_{L,w,\lambda}\ne 0$, we put $\xi_{L,w,\lambda}=0$.

(3) For $\lambda\in\Lambda^+$, denote
\begin{align*}
& \Xi_\lambda = \{\xi_{L,w,\lambda}\in\hat\Gamma : (L,w)\in {\mk L}\times W\} \;,\\
& \Xi_\lambda^+=\Xi_\lambda\cap\hat\Gamma^+ \;, \\
& \Xi'_\lambda=\{\xi\in\Xi_\lambda^+\setminus \{0\}: L_\xi\in\mk L'\} \;, \\
& \Xi\mk W_\lambda^+ = \{ (\xi,w)\in \Xi_\lambda^+\times W : w\in{^\xi W}^+(\sigma_\xi\lambda,\xi) \} \;.
\end{align*}
\end{defin}

\begin{rem}\label{Rem XimaxisXicub}
Note that $\Xi_{\rm max}\subset\Xi_\lambda^+$ for every $\lambda\in\Lambda^{++}$. The Levi subgroups $L_j=L_{\xi_j}$, $j=1,...,q$, are exactly the maximal elements of $\mk L$, i.e., 
\begin{gather}\label{For LeviMax}
\mk L_{\max}=\{L_{\xi}: \xi\in\Xi_{\rm max}\} \;.
\end{gather}
We denote $L_j=L_{\xi_j}$ the Levi component of $P_j$ for $j=1,,,.q$. For every $L_j$, the intersection $Z(\mk l_j)\cap \hat{\mk g}$ of the center of $\mk l_j$ with $\hat{\mk g}$ is one-dimensional, generated by $\xi_j$. Thus, for every $(w,\lambda)\in W\times \Lambda$ the element $\xi_{L_j,w,\lambda}$ is proportional to $\xi_j$ and in fact
$$
\xi_{L_j,w,\lambda} \in \{\xi_j,0,-\xi_j\} \;.
$$
\end{rem}

\begin{lemma}\label{Lemma SW-StratifyingPairs}
Let $\lambda\in\Lambda^{++}$. The set of dominant stratifying OPS for the $\hat G$-unstable locus $X^{us}(\lambda)$ is given by 
$$
\mk S_\lambda=\{\xi\in\Xi'_{\lambda} : \exists w\in W : w\sigma_\xi\lambda(\xi)>0, w\sigma_\xi\lambda\in C^{\hat L'_\xi}(L'_\xi x_{w\sigma_\xi}) \} \;.
$$
Denote
$$
{\mk S\mk W}_\lambda = \{(\xi,w)\in\mk S_\lambda\times W : w\in {^\xi W}^+(\sigma_\xi \lambda,\xi), ( L_\xi [v_{w\sigma_\xi\lambda}])_{\hat L'_\xi}^{ss} \ne \emptyset \}.
$$
Then there is a natural surjective map $\mk S\mk W_\lambda\to \wt{\mk S}_\lambda$ onto the set of stratifying pairs, given by $(\xi,w)\to(\xi,w\sigma_\xi\lambda(\xi))$.
\end{lemma}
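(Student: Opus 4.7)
The plan is to specialize the Kirwan--Ness theorem (Theorem \ref{Theo Kirwan-Ness}) to $X=G/B$, exploiting the explicit description of blades in (\ref{For BladesInFlags}) and the weights carried by them in (\ref{For BladesInFlags-Weights}).

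First, I would verify that every dominant stratifying OPS lies in $\Xi'_\lambda$. Let $\xi\in\hat\Gamma^+\setminus\{0\}$ be stratifying, with $\sigma_\xi$ its cubicle element. By Theorem \ref{Theo Kirwan-Ness}, $\xi$ arises as $\xi_S$ where $S\subset St_{\hat T}(X^{\hat T})$ is a set of $\hat T$-weights of fixed points lying in a single connected component of $X^{\xi,m}(\lambda)$. By (\ref{For BladesInFlags}) the connected components of the fixed-point set of $\xi$ inside the blade are exactly the closed $L_\xi$-orbits $L_\xi x_{w\sigma_\xi}$ for $w\in {^\xi W}$ with $w\sigma_\xi\lambda(\xi)=m$, and by (\ref{For BladesInFlags-Weights}) the $\hat T$-weights on such a component are $\iota^*(W_\xi w\sigma_\xi\lambda)$. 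Thus $S=\iota^*(W_\xi w\sigma_\xi\lambda)$ for some $w$, and the closest-to-origin direction yields $\xi=\xi_{L_\xi,w\sigma_\xi,\lambda}$. Because $\xi\ne 0$ and $L_\xi\in\mk L$, this forces $\xi\in\Xi_\lambda^+$; the fact that $\xi$ generates a ray in the center of $\hat{\mk l}_\xi$ (hence $L_\xi$ is a proper Levi meeting $\hat{\mk g}$ nontrivially in its center) places it in $\Xi'_\lambda$.

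Next, I would translate the semistability condition (ii) of Definition \ref{Def StratElts} into the inequality characterizing $\mk S_\lambda$. Fix $\xi\in\Xi'_\lambda$, $w\in{^\xi W}$ with $m=w\sigma_\xi\lambda(\xi)>0$. The component of $X^{\xi,m}(\lambda)$ corresponding to $w$ is the Schubert variety $L_\xi x_{w\sigma_\xi}$ of $L_\xi$ for the Borel $B_\xi=L_\xi\cap B^{\sigma_\xi}$; the restriction of $\mc O(1)$ equals the line bundle associated to the $T$-weight $w\sigma_\xi\lambda$. Since $\xi$ is central in $L_\xi$ and acts on this component by the constant scalar $m$, quotienting the action of $\hat L_\xi$ by the OPS $\xi$ is, up to a finite kernel, the restriction to the semisimple part $\hat L'_\xi$. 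Therefore
$$
(X^{\xi,m})_{\hat L_\xi/\xi}^{ss}\cap L_\xi x_{w\sigma_\xi} \ne\emptyset
\;\;\Longleftrightarrow\;\;
(L_\xi x_{w\sigma_\xi})_{\hat L'_\xi}^{ss}(w\sigma_\xi\lambda) \ne\emptyset
\;\;\Longleftrightarrow\;\;
w\sigma_\xi\lambda\in C^{\hat L'_\xi}(L'_\xi x_{w\sigma_\xi}).
$$
This is exactly the condition appearing in the definition of $\mk S_\lambda$. Conjunction over components $w$ gives that $\xi$ is stratifying iff at least one such $w$ exists, which is the claimed description.

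Finally, the map $(\xi,w)\mapsto (\xi,w\sigma_\xi\lambda(\xi))$ sends every pair in $\mk S\mk W_\lambda$ to a pair $(\xi,m)$ with $m>0$ and $(X^{\xi,m})_{\hat L_\xi/\xi}^{ss}\ne\emptyset$; surjectivity follows because any stratifying $(\xi,m)$ has its $m$ realised as the value $w\sigma_\xi\lambda(\xi)$ on a component $L_\xi x_{w\sigma_\xi}$ of $X^{\xi,m}(\lambda)$ which carries a semistable point for the Levi action. The map is typically not injective: several $w\in{^\xi W}$ may give the same $m$, corresponding to the several components of the same blade $X^{\xi,m}(\lambda)$.

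The main obstacle is Step~2: the passage from the abstract Kirwan--Ness semistability condition on $X^{\xi,m}$ modulo $\xi$ to the concrete condition that $w\sigma_\xi\lambda$ lies in the $\hat L'_\xi$-ample cone of $L'_\xi x_{w\sigma_\xi}$. This requires unwinding the equivariant structure of $\mc O(1)|_{L_\xi x_{w\sigma_\xi}}$, recognising it as a homogeneous line bundle on a Schubert variety of the Levi, and carefully handling the central factor $\xi$, which acts by a nonzero scalar and hence can be absorbed after the quotient, reducing the linearised action to that of the semisimple part $\hat L'_\xi$.
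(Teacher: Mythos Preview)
Your proposal follows essentially the same route as the paper: invoke the Kirwan--Ness theorem, use the description (\ref{For BladesInFlags})--(\ref{For BladesInFlags-Weights}) of the blades and their weights to see that stratifying OPS lie in $\Xi'_\lambda$, and then reinterpret condition (ii) of Definition~\ref{Def StratElts} as membership of $w\sigma_\xi\lambda$ in the $\hat L'_\xi$-ample cone of the Levi orbit. The surjectivity argument at the end also matches the paper's reasoning.

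There is one point where your justification is slightly looser than the paper's. You write that ``quotienting the action of $\hat L_\xi$ by the OPS $\xi$ is, up to a finite kernel, the restriction to the semisimple part $\hat L'_\xi$''. This is not true in general: the center of $\hat L_\xi$ may have rank greater than one, in which case $\hat L_\xi/\xi$ still carries a nontrivial central torus beyond $\hat L'_\xi$. The paper handles this by observing that the very condition $\xi=\xi_{L_\xi,w,\sigma_\xi\lambda}$ (i.e., $\xi$ points in the direction of the closest-to-origin point of ${\rm Conv}(\iota^*(W_\xi w\sigma_\xi\lambda))$) forces $0$ to lie in the projected polytope in $\hat{\mk t}/\R\xi$, so the $\hat T/\xi$-semistable locus on the component is automatically nonempty. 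This is what allows one to replace $\hat L_\xi/\xi$-ampleness by $\hat L'_\xi$-ampleness. Your Step~1 already records that $\xi$ has this closest-to-origin form, so the repair is just to invoke that fact in Step~2 rather than the incorrect group-theoretic identification.
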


\begin{proof}
The lemma follows from the Kirwan-Ness stratification theorem and Lemma \ref{Lemma OPSandParabOrbit}. Indeed, we were led to the definition of the set $\Xi'_\lambda$ by applying to our $X=G/B$ the constructions of the blades need in Definition \ref{Def StratElts} of stratifying elements. The property which remains to be checked is the presence of Levi-semistable points in the blades. Our blades are parametrized by $\Xi_\lambda^{+}\setminus \{0\}$, and we have to check the condition
$$
(L_\xi [v_{w\sigma_\xi\lambda}])_{\hat L_\xi/\xi}^{ss} \ne \emptyset
$$
with $\xi\in\Xi_\lambda$ and $w\in {^\xi W}^+(\lambda,\xi)$. The above condition may be expressed as: the restriction of the line bundle $\mc L_\lambda$ from $X$ to $L_\xi x_{w\sigma_\xi}$ is $\hat L_\xi/\xi$-ample. The fact that $\xi$ is of the form $\xi_{L,w,\sigma_\xi\lambda}$ ensures that the $\hat T/\xi$-semistable locus is nonempty, so the line bundle is $\hat L_\xi/\xi$-ample if and only if it is $\hat L'_\xi$-ample. Since $w$ is the shortest representative in its left $W_\xi$-coset, the weight $w\sigma_\xi\lambda$ (or rather its appropriate restriction) is dominant with respect to the Borel subgroup $B^{\sigma_\xi}\cap L'_\xi$ of $L'_\xi$. So the requested semistable locus is nonempty if and only if $w\sigma_\xi\lambda\in C^{\hat L'_\xi}(L'_\xi/(B^{\sigma_\xi}\cap L'_\xi))$, which is just the condition imposed in the definition $\mk S_\lambda$. For the second statement of the lemma it remains to notice that the requirement for $w$ to be the shortest representative in its $W_\xi$-coset ensures a 
bijective correspondence between $\mk S\mk W_\lambda$ and the set of connected components of Kirwan-Ness strata.
\end{proof}

\begin{thm}\label{Theo KirwanStratFlag}
Let $\lambda\in\Lambda^{++}$. The Kirwan-Ness stratification of $\hat G$-unstable locus in $X=G/B$ with respect to the line bundle $\mc L_\lambda$ is given by
$$
X^{us}(\lambda) = \bigsqcup\limits_{(\xi,w)\in \mk S\mk W_\lambda} \hat G (P_\xi x_{w\sigma_\xi})_{\hat L_\xi/\xi}^{ss}(w\sigma_\xi\lambda) \;.
$$
The dimension and codimension of the stratum for $(\xi,w)\in\mk S\mk W_\lambda$ are given by
\begin{align*}
& \dim \hat G (P_\xi x_{w\sigma_\xi})_{\hat L_\xi/\xi}^{ss}(w\sigma_\xi\lambda) = \dim\hat G/\hat P_\xi+\dim P_\xi x_{w\sigma_\xi}= \hat r_\xi + n_\xi    + l_\xi(w) \;, \\
& {\rm codim}_X \hat G P_\xi x_{w\sigma_\xi} = r_\xi - \hat r_\xi - l_\xi(w) \;.
\end{align*}
The dimension and codimension of the unstable locus are given by
\begin{align*}
&\dim X^{us}(\lambda) = \max\{ \hat r_\xi+n_\xi+l^{\rm str}_{\xi,\lambda} : \xi\in \mk S_\lambda \} \;, \\
&{\rm codim}_X X^{us}(\lambda) = \min\{r_\xi-\hat r_\xi-l^{\rm str}_{\xi,\lambda} : \xi\in \mk S_\lambda\}  \;,
\end{align*}
where $l^{\rm str}_{\xi,\lambda} = \max\{l_\xi(w):(\xi,w)\in \mk S\mk W_\lambda\}$.
\end{thm}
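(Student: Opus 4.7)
The plan is to assemble results from the preceding lemmas. Concretely, I will apply the Kirwan-Ness stratification (Theorem~\ref{Theo Kirwan-Ness}) to the embedding $X=G/B\hookrightarrow\mP(V_\lambda)$, identify the stratifying index set with $\mk S\mk W_\lambda$ through Lemma~\ref{Lemma SW-StratifyingPairs}, decompose each blade into parabolic orbits via~\eqref{For BladesInFlags}, and evaluate the orbit dimensions using Lemma~\ref{Lemma OPSandParabOrbit}(ii).

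First, Theorem~\ref{Theo Kirwan-Ness} yields a disjoint decomposition
\[
X^{us}(\lambda)=\bigsqcup_{(\xi,m)} \hat G(X_{\xi,m})^{ss}_{\hat L_\xi/\xi}
\]
indexed by the stratifying pairs, and by Lemma~\ref{Lemma SW-StratifyingPairs} these pairs are the image of $\mk S\mk W_\lambda$ under $(\xi,w)\mapsto(\xi,w\sigma_\xi\lambda(\xi))$. By~\eqref{For BladesInFlags}, each blade $X_{\xi,m}$ is the disjoint union of the parabolic orbits $P_\xi x_{w\sigma_\xi}$ with $w\in{^\xi W}$ and $w\sigma_\xi\lambda(\xi)=m$; these are the connected components of the blade, each preserved by $\hat P_\xi$. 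Restricting the Levi-semistable locus to each component gives $(P_\xi x_{w\sigma_\xi})^{ss}_{\hat L_\xi/\xi}(w\sigma_\xi\lambda)$, so after $\hat G$-saturation the Kirwan-Ness stratum at level $(\xi,m)$ breaks up into the pieces appearing in the theorem. Disjointness of the full decomposition is inherited from the disjointness in Theorem~\ref{Theo Kirwan-Ness} together with the fact that distinct connected components of a given blade retract to distinct $L_\xi$-orbits in $X^{\xi,m}$, which cannot coincide even after $\hat G$-saturation.

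For the dimension of each stratum I invoke the finiteness statement in Theorem~\ref{Theo Kirwan-Ness}: the natural map $\hat G\times_{\hat P_\xi}(P_\xi x_{w\sigma_\xi})^{ss}\to\hat G(P_\xi x_{w\sigma_\xi})^{ss}$ is finite, so the stratum has dimension $\hat r_\xi+\dim P_\xi x_{w\sigma_\xi}$. I then apply Lemma~\ref{Lemma OPSandParabOrbit}(ii) with the Borel subgroup $B^{\sigma_\xi}$, with respect to which $\xi$ is $G$-dominant: since $w\in{^\xi W}$ is the $B^{\sigma_\xi}$-shortest representative of $W_\xi w$, the lemma yields $\dim P_\xi x_{w\sigma_\xi}=n_\xi+l_\xi(w)$, hence ${\rm codim}_X P_\xi x_{w\sigma_\xi}=r_\xi-l_\xi(w)$ from $\dim X=n_\xi+r_\xi$. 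Subtracting $\hat r_\xi$ from the latter gives the codimension of the $\hat G$-stratum, and taking the maximum and minimum over all $(\xi,w)\in\mk S\mk W_\lambda$ produces the claimed global dimension and codimension formulas with $l^{\rm str}_{\xi,\lambda}=\max\{l_\xi(w):(\xi,w)\in\mk S\mk W_\lambda\}$.

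The conceptually hardest part of the argument has already been absorbed into the preceding Lemma~\ref{Lemma SW-StratifyingPairs}, namely the translation between the Kirwan-Ness requirement that $(X^{\xi,m})^{ss}_{\hat L_\xi/\xi}\neq\emptyset$ and the concrete representation-theoretic condition that $w\sigma_\xi\lambda$ lie in the $\hat L'_\xi$-ample cone of $L'_\xi x_{w\sigma_\xi}$. Once that identification is in place, the present theorem is essentially a repackaging using the explicit combinatorics of parabolic orbits in $G/B$.
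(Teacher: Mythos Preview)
Your proof is correct and follows essentially the same approach as the paper's own proof, which consists of two sentences invoking Lemma~\ref{Lemma SW-StratifyingPairs} for the stratification formula and the Kirwan--Ness dimension formula together with Lemma~\ref{Lemma OPSandParabOrbit} for the dimensions. Your version simply spells out the intermediate steps (the decomposition of blades via~\eqref{For BladesInFlags} and the explicit computation $\dim P_\xi x_{w\sigma_\xi}=n_\xi+l_\xi(w)$) that the paper leaves implicit.
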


\begin{proof}
The formula for the unstable locus follows from Lemma \ref{Lemma SW-StratifyingPairs}. The dimension formulae follow from the Kirwan-Ness dimension formula for the strata and Lemma \ref{Lemma OPSandParabOrbit}. 
\end{proof}

\begin{rem}
In Section \ref{Section Popov}, based on ideas of Popov, \cite{Popov-Nullforms}, we present an algorithm, using rooted trees, giving a ``yes'' or ``no'' answer to the question whether a given $\lambda$ belongs to $C^{\hat T}(X)$. This algorithm can be applied to the Levi subgroups $\hat L'_\xi\subset L_\xi'$ and any given $w\sigma_\xi\lambda$, in order to determine completely the Kirwan-Ness stratification in any given case.
\end{rem}

%\begin{lemma}
%Let $\xi\in\hat\Gamma^+\setminus \{0\}$ and let $L=L_\xi$ and $P=P_\xi$ be the corresponding
%Levi and parabolic subgroups of $G$. Suppose that $L=L_\xi\in \mk L'$ and let $w\in W$.
%If $P x_w$ is $\hat G$-unstable for some $\lambda\in\Lambda^{++}$, then there exists
%$\lambda'\in\Lambda^{++}$ such that element $\xi'=\xi_{L,w,\lambda'}$ is dominant,
%defines the same Levi group $L=L_{\xi'}$ and $\hat G(L x_w)^{ss}_{\hat L'}(\lambda')\ne\emptyset$.
%In particular, $\xi_{L,w,\lambda'}\in\mk S_{\lambda'}$ is a stratifying element.
%\end{lemma}

%\begin{proof}
%The hypothesis on $L,w,\lambda$ implies that the coset $W_\xi w$ contains an element
%of the form $w_1\sigma_\xi$ with $w_1\in ^\xi W^+(\sigma_\xi\lambda,\xi)$. The choice
%of $w$ implies that $w_1\sigma_\xi\lambda$ is dominant for $L$ with respect to $B_\xi$.
%Hence the restriction of $\mc L_\lambda$ to $Lx_w$ can be written as
%$L\times_{B_\xi} \C_{-w_1\sigma_\xi\lambda}$. Since $C^{\hat L'}(L/B_\xi)$
%is assumed to be nonempty, and hence contains strictly dominant weights,
%there exists $\nu\in Span_\Z\Delta(L)$, dominant with respect to $B_\xi$,
%such that $w\sigma_\xi\lambda +\nu \in C^{\hat L'}(L/B_\xi)$.

%\end{proof}

\section{The $\hat G$-ample cone of $G/B$ and GIT-classes}

The codimension formula for the $\hat G$-unstable locus of $\mc L_\lambda$ in $X$ provides a description of the $\hat G$-ample cone $C^{\hat G}(X)$ by linear inequalities. In fact we have similar descriptions obtained simultaneously for all $C_{k}^{\hat G}(X)$, showing that these form a sequence of rational polyhedral cones in $\Lambda^+_\R$, each contained in the relative interior of the previous one, in the relative topology of the Weyl chamber. We need some preliminary results on GIT-classes with respect to the torus $\hat T$, which form a subdivision of the GIT-classes for $\hat G$.

\subsection{GIT-chambers for $\hat T$ and $\hat G$}

Recall that a GIT-class on $X$ with respect to a given subgroup of $G$ is a chamber if the unstable locus is a proper subvariety and contains all points with positive dimensional stabilizer. In the flag variety, the connected components of the fixed point set of a one-parameter subgroup are the closed orbits of its centralizer. We have the following.

\begin{lemma}\label{Lemma T-chambers}
Let $\mk L_{\max}=\{L_1,...,L_q\}$ be the set of maximal elements in $\mk L$ with respect to inclusion (see Remark \ref{Rem XimaxisXicub}). For any $\lambda\in \Lambda^{++}$ the following are equivalent:
\begin{enumerate}
\item[{\rm (i)}] the $\hat T$-GIT-class of a weight $\lambda$ is a $\hat T$-chamber;
\item[{\rm (ii)}] $0\notin\Xi_{\lambda}$, i.e. the closed orbits $L_\xi x_w$ in $G/B$ of centralizers in $G$ of nontrivial OPS of $\hat T$ are $\hat T$-unstable; 
\item[{\rm (iii)}] $\xi_{L_j,w,\lambda}\ne 0$ for all $w\in W$ and $j=1,...,q$.
\item[{\rm (iv)}] $\lambda(w\xi_j)\ne0$ for all $w\in W$ and $j=1,...,q$.
\end{enumerate}
\end{lemma}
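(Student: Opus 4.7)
My plan is to establish the chain (i) $\Leftrightarrow$ (ii) $\Leftrightarrow$ (iii) $\Leftrightarrow$ (iv), relying on Lemma \ref{Lemma OPSandParabOrbit}(i) to identify fixed-point sets of $\hat T$-one-parameter subgroups with unions of closed Levi orbits, on Lemma \ref{Lemma XimaxisXicum} to reduce considerations to the finite list of maximal centralizers $L_1,\ldots,L_q$, and on Remark \ref{Rem XimaxisXicub} to pin down the direction of $\xi_{L_j,w,\lambda}$.

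For (i) $\Leftrightarrow$ (ii): since $G$ is semisimple, $0$ is the centroid of $W\lambda$, so $0\in {\rm Conv}(\iota^*(W\lambda))$, and the Hilbert--Mumford criterion yields a $\hat T$-semistable point (for example, a generic point of the dense Bruhat cell $Bx_{w_0}$, whose support in $V_\lambda$ equals $\mathcal P(\lambda)\supset W\lambda$). Hence $X^{us}_{\hat T}(\lambda)$ is a proper subvariety, and the $\hat T$-chamber condition for $\lambda\in\Lambda^{++}$ reduces to the requirement that every point of $X$ with positive-dimensional $\hat T$-stabilizer lies in $X^{us}_{\hat T}(\lambda)$. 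Such points are exactly those of $X^\eta$ for some nonzero $\eta\in\hat\Gamma$, and Lemma \ref{Lemma OPSandParabOrbit}(i) identifies them as the union of closed orbits $L_\eta x_w$, giving (i) $\Leftrightarrow$ (ii).

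For (ii) $\Leftrightarrow$ (iii) the direction $\Rightarrow$ is immediate since each $L_j$ lies in $\mk L$. For the converse I use the $\hat W$-action (which permutes closed $L_\eta$-orbits while preserving $\hat T$-(in)stability) to reduce to $\eta\in\hat\Gamma^+\setminus\{0\}$. Picking a cubicle $\hat{\mk t}_\sigma$ that contains $\eta$, I expand $\eta = \sum a_{j_k}\xi_{j_k}$ over the generating rays of this cubicle with $a_{j_k}\geq 0$ and at least one $a_{j_k}>0$. Since each $\xi_{j_k}$ is $B^\sigma$-dominant, any $B^\sigma$-positive root $\alpha$ satisfies $\alpha(\xi_{j_k})\geq 0$, so $\alpha(\eta)=0$ forces $\alpha(\xi_{j_k})=0$ for every $k$ with $a_{j_k}>0$. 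Hence $L_\eta\subset L_{j_k}$ for any such $k$, whence $L_\eta x_w\subset L_{j_k}x_w$ and $\hat T$-instability of the larger orbit is inherited by the smaller. This step is the main technical point; it is exactly where Lemma \ref{Lemma XimaxisXicum} and the geometry of cubicles are indispensable.

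For (iii) $\Leftrightarrow$ (iv): since $W_j$ stabilizes $\xi_j$, every element of $\iota^*(W_j w\lambda)$ pairs with $\xi_j$ to give the single value $w\lambda(\xi_j)$, so ${\rm Conv}(\iota^*(W_j w\lambda))$ is contained in the affine hyperplane $\{\mu\in\hat{\mk t}^*_\R : \mu(\xi_j)=w\lambda(\xi_j)\}$. If $w\lambda(\xi_j)\neq 0$ this hyperplane avoids the origin, whence $\nu_{L_j,w,\lambda}\neq 0$ and consequently $\xi_{L_j,w,\lambda}\neq 0$. Conversely, if $w\lambda(\xi_j)=0$ then any nonzero candidate for $\nu_{L_j,w,\lambda}$ must have zero pairing with $\xi_j$, while Remark \ref{Rem XimaxisXicub} forces it to be a positive multiple of $\xi_j$ (under Killing duality), which is incompatible unless $\xi_{L_j,w,\lambda}=0$. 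Thus (iii) is equivalent to $w\lambda(\xi_j)\neq 0$ for all $w$ and $j$, and substituting $w\to w^{-1}$ converts this into (iv).
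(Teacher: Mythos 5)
Your proof is correct and follows essentially the same route as the paper's: (i)$\Leftrightarrow$(ii) via the identification of positive-dimensional-stabilizer points with closed Levi orbits (Lemma \ref{Lemma OPSandParabOrbit}(i)), (ii)$\Leftrightarrow$(iii) via the containment of each $L_\eta$ in some maximal $L_j$ (the paper phrases this as monotonicity of the convex hulls $\mathrm{Conv}(\iota^*(W_Lw\lambda))$ under $L\subseteq L_j$, you phrase it as inheritance of instability along $L_\eta x_w\subseteq L_{j}x_w$ — the same fact in dual form), and (iii)$\Leftrightarrow$(iv) via Remark \ref{Rem XimaxisXicub}. You simply supply more detail than the paper's terse proof (nonemptiness of the semistable locus, the explicit cubicle computation, the hyperplane argument), which is all sound.
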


\begin{proof}
By Lemma \ref{Lemma OPSandParabOrbit} the fixed point set of of any OPS is the union of the closed orbits of its centralizer. So having a $\hat T$-chamber is equivalent to having all closed Levi orbits unstable, which is in turn equivalent to (ii).

To see that (iii) implies (ii) recall that $\xi_{L,w,\lambda}$ is the indivisible OPS corresponding to the weight $\nu_{L,w,\lambda}$, which is the closest to $0$ point in the convex hull of $\iota^*(W_Lw\lambda)$, and hence if 
$\xi_{L,w,\lambda}=0$ for some $L\in \mk L$ and $w\in W$, then $\xi_{L_j,w,\lambda}=0$ for any $L_j\supset L$.

The equivalence of (iii) and (iv) follows form Remark \ref{Rem XimaxisXicub}.
\end{proof}

\begin{thm}\label{Theo hatT-GIT-classes}
The decomposition of the ample cone $\Lambda^+_\R$ on $X$ into GIT-classes with respect to $\hat T$ is defined by the following system of hyperplanes, parametrized by pairs $\xi\in\Xi_{\rm max}$, $w\in {^\xi W}$,
$$
\mc H_{\sigma_\xi^{-1} w^{-1}\xi} = \{ \lambda\in\Lambda_\R : \lambda(\sigma_\xi^{-1} w^{-1}\xi) = 0 \} \;,
$$
\end{thm}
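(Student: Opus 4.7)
The plan is to show that the $\hat T$-unstable locus $X^{us}_{\hat T}(\lambda)$ depends on $\lambda \in \Lambda^{+}_\R$ only through the sign pattern of the finite collection of linear forms $\lambda \mapsto \lambda(\sigma_\xi^{-1} w^{-1} \xi)$ indexed by $(\xi,w) \in \Xi_{\rm max} \times {^\xi W}$. This forces the GIT-classes to coarsen, and by Lemma \ref{Lemma T-chambers} to coincide with, the chamber decomposition of the hyperplane arrangement in the statement.

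The first step is to combine the Hilbert--Mumford criterion with Corollary \ref{Cor hatT-unstablelocus} and Lemma \ref{Lemma XimaxisXicum}, reducing $X^{us}_{\hat T}(\lambda)$ to a union of $\xi$-unstable loci for $\xi$ running over the $\hat W$-orbits of $\Xi_{\rm max}=\{\xi_1,\dots,\xi_q\}$; sub-maximal dominant OPS give no new contributions since their parabolics are contained in a $P_j$. For a single $\xi\in\Xi_{\rm max}$ lying in the cubicle $\hat{\mk t}_{\sigma_\xi}$ with $\sigma_\xi^{-1}\xi\in\Gamma^+$, Lemma \ref{Lemma OPSandParabOrbit}(iii) applied to the Borel $B^{\sigma_\xi}$ yields
$$X^{us}_\xi(\lambda) \;=\; \bigsqcup_{\substack{w\in{^\xi W}\\ \lambda(\sigma_\xi^{-1} w^{-1}\xi)>0}} P_\xi \, x_{w\sigma_\xi},$$
because the Mumford value on this orbit is $M^\xi(x_{w\sigma_\xi})=w\sigma_\xi\lambda(\xi)=\lambda(\sigma_\xi^{-1}w^{-1}\xi)$. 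Thus the disjoint decomposition is determined entirely by the signs of the linear forms in question.

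Next, the map $wW_\xi \mapsto \sigma_\xi^{-1}w^{-1}\xi$ is a bijection from ${^\xi W}$ onto the $W$-orbit $W\xi$, which contains the $\hat W$-orbit $\hat W\xi$ appearing in Corollary \ref{Cor hatT-unstablelocus}. Hence, letting $\xi$ vary over $\Xi_{\rm max}$ and $w$ over ${^\xi W}$ recovers the full $\hat T$-unstable locus as a function of the sign pattern of the forms in the statement alone. Any two weights in the same open region of $\Lambda^+_\R \setminus \bigcup \mc H_{\sigma_\xi^{-1} w^{-1}\xi}$ must therefore define the same $X^{us}_{\hat T}$, and thus lie in the same GIT-class. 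Applying Lemma \ref{Lemma T-chambers}(iv) identifies the open regions with the $\hat T$-chambers, since its condition (iv) is exactly that $\lambda$ avoids all the hyperplanes, while weights on any one of them define non-chamber classes forming proper faces.

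The main obstacle is the combinatorial bookkeeping that matches the two parametrizations: verifying that every Hilbert--Mumford inequality arising from an arbitrary OPS $\eta\in\hat\Gamma$ of $\hat T$ is subsumed by some pair $(\xi,w)\in\Xi_{\rm max}\times{^\xi W}$. This is handled via the chain of identifications $\hat W\xi_j\subset W\xi_j\leftrightarrow {^{\xi_j}\!W}$, together with the containment $P_\eta\subset P_{\xi_j}$ whenever $\eta$ is supported on the cubicle generated by $\xi_j$, the latter ensuring that $\eta$-instability factors through $\xi_j$-instability up to a choice of $w$.
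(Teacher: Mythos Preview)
Your proposal is correct and follows essentially the same approach as the paper's proof, which also reduces everything to Corollary~\ref{Cor hatT-unstablelocus} and Lemma~\ref{Lemma T-chambers}; you simply unpack more of the sign-pattern argument that the paper leaves implicit, and you invoke the equivalence (i)$\Leftrightarrow$(iv) of Lemma~\ref{Lemma T-chambers} where the paper cites (i)$\Leftrightarrow$(iii). The one step worth making explicit in both arguments is why two distinct open regions of the hyperplane complement cannot lie in the same GIT-class: this uses convexity of GIT-classes together with the fact (from Lemma~\ref{Lemma T-chambers}) that any point on a separating hyperplane fails to be in a chamber.
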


\begin{proof}
We described the $\hat T$-unstable locus for $\lambda\in\Lambda^{++}$ in Corollary \ref{Cor hatT-unstablelocus}. By Lemma \ref{Lemma T-chambers} (specifically (i)$\tst$(iii)), the walls bounding the $\hat T$-chambers are indeed defined by hyperplanes of the above form.
\end{proof}

\begin{thm}\label{Theo hatG-chambersandfacets}
Let $C$ be a $\hat T$-chamber consider a regular facet $F\subset \ol{C}$, which is necessarily of the form $\ol{F}=\ol{C}\cap\mc H_{\sigma_\xi^{-1} w^{-1}\xi}$ according to Theorem \ref{Theo hatT-GIT-classes}. Then $C$ and $F$ define distinct GIT-classes with respect to $\hat G$ if and only if $w\sigma_\xi F\subset C^{\hat L'_\xi}(L_\xi x_w)$.
\end{thm}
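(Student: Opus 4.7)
The plan is to compare the Kirwan-Ness stratifications of $X^{us}(\lambda_C)$ and $X^{us}(\lambda_F)$ for representatives $\lambda_C$ in the relative interior of $C$ and $\lambda_F$ in the relative interior of $F$, using Theorem \ref{Theo KirwanStratFlag} and Lemma \ref{Lemma SW-StratifyingPairs}. The two weights define the same $\hat G$-GIT class exactly when these loci coincide.

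The first step is to show that the index set $\mk S \mk W_\lambda$ is locally constant as $\lambda$ ranges over the open chamber $C$. The sign conditions $v\sigma_\eta\lambda(\eta) > 0$ are locally constant on $C$ by Theorem \ref{Theo hatT-GIT-classes}: their zero loci are precisely the walls of $\hat T$-chambers. The ample-cone conditions $v\sigma_\eta\lambda \in C^{\hat L'_\eta}(L'_\eta x_{v\sigma_\eta})$ are also locally constant on $C$ by induction on the group: the walls of the $\hat L'_\eta$-ample cone on a flag variety of $L'_\eta$ arise from dominant OPS of subgroups of $\hat L'_\eta \subset \hat T$, and their pullbacks under $\lambda \mapsto v\sigma_\eta\lambda$ lie among the hyperplanes of Theorem \ref{Theo hatT-GIT-classes}, so they cannot cut through $C$.

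Next, I would track the transition $\lambda_C \to \lambda_F$. The pairs $(\eta, v)$ whose sign condition degenerates on $F$ are exactly those with $\sigma_\eta^{-1} v^{-1}\eta$ positively proportional to $\sigma_\xi^{-1} w^{-1}\xi$; among these is $(\xi, w)$ itself. All other pairs in $\mk S \mk W_{\lambda_C}$ persist in $\mk S \mk W_{\lambda_F}$ with the same associated $\hat G$-saturated blade, up to continuous deformation. Hence, by Theorem \ref{Theo KirwanStratFlag}, the symmetric difference of $X^{us}(\lambda_C)$ and $X^{us}(\lambda_F)$ is precisely the union of $\hat G$-saturated strata over this finite collection of dropping pairs.

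The GIT-classes of $C$ and $F$ therefore differ if and only if at least one dropping stratum is nonempty at $\lambda_C$. By Lemma \ref{Lemma SW-StratifyingPairs} applied to the distinguished pair $(\xi, w)$, its stratum is nonempty exactly when $w\sigma_\xi\lambda_C \in C^{\hat L'_\xi}(L_\xi x_w)$. Combining the local constancy from Step~1 with the closedness of the $\hat L'_\xi$-ample cone, this membership extends to all of $\ol{C}$, giving the stated criterion $w\sigma_\xi F \subset C^{\hat L'_\xi}(L_\xi x_w)$. The main obstacle I anticipate is handling the companion pairs sharing the wall hyperplane: several distinct $(\eta, v)$ may have $\sigma_\eta^{-1}v^{-1}\eta$ proportional to $\sigma_\xi^{-1} w^{-1}\xi$ and drop their strata simultaneously, and one must verify that the condition for the distinguished pair $(\xi, w)$ is equivalent to (or implied by any of) the companion conditions. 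This should reduce to the observation that the relevant closed $L_\eta$-orbits are isomorphic $\hat L'_\xi$-flag varieties under the Weyl group translations implied by the proportionality of the scaled OPS, so their respective $\hat L'_\xi$-ample cones coincide under the identification.
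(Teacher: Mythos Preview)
Your approach is essentially the same as the paper's: both arguments rest on Theorem \ref{Theo KirwanStratFlag} (the Kirwan--Ness stratification for $X=G/B$) to detect whether the orbit $\hat G P_\xi x_{w\sigma_\xi}$ contributes a stratum that is present for $\lambda_C$ but absent for $\lambda_F$. The paper's proof is in fact a single sentence recording exactly this observation --- that the Levi-ample condition $w\sigma_\xi F\subset C^{\hat L'_\xi}(L_\xi x_w)$ is, via Theorem \ref{Theo KirwanStratFlag}, precisely what makes $\hat G P_\xi x_w$ an actual Kirwan--Ness stratum --- and leaves the remaining bookkeeping implicit.

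Your write-up is more explicit in several respects: you argue local constancy of $\mk S\mk W_\lambda$ over the chamber, track which pairs degenerate on the wall, and you raise the companion-pair issue (several $(\eta,v)$ can share the hyperplane $\mc H_{\sigma_\xi^{-1}w^{-1}\xi}$). The paper does not address this last point at all; your proposed resolution via identifying the closed $L_\eta$-orbits as isomorphic Levi flag varieties is plausible but not fully justified, and strictly speaking this is a gap in both arguments for the ``only if'' direction. One should also be careful that the distinguished pair $(\xi,w)$ attached to $F$ has the correct sign on $C$ (i.e., $w\sigma_\xi\lambda_C(\xi)>0$), otherwise the stratum is absent on both sides and the criterion is vacuous; the paper tacitly assumes this orientation.
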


\begin{proof}
If the condition $w\sigma_\xi F\subset C^{\hat L'_\xi}(L_\xi x_w)$ implies, by Theorem \ref{Theo KirwanStratFlag}, that $\hat G P_\xi x_w$ (or rather an open subset of it) is a Kirwan-Ness stratum for $\lambda\in F$. 
\end{proof}

\begin{example}
The following example shows that the $\hat G$-classes do not coincide with the $\hat T$-classes. Consider the diagonal embedding $\hat G = SL_3 \hookrightarrow SL_3^{\times 3} = G$. Let $\lambda=(\hat\lambda,\hat\lambda+\hat\lambda^*,\lambda^*)$, with any $\hat\lambda=\binom{a_1}{a_2}$ satisfying $a_1>a_2>0$. Then we have $\lambda\in C^{\hat G}(X)\cap\mc H_{w^{-1}\hat\epsilon_1}$, where $w=(s_1s_2,s_1,1)$. The semisimple centralizer subgroups are then given by $\hat L'_1=SL_2\hookrightarrow SL_2^{\times 3} = L'_1$, and the variety $Z=L_1x_w$ is a triple product $(\mP^1)^{\times3}$. The element $w$ is of minimal length in its coset by $W_1=\{1,s_2\}^{\times 3}$. Further, we calculate that
$$
\lambda_1=w\lambda_{\vert \mk t\cap\mk L'_1}=\frac13(a_1-a_2,3a_1+3a_2,2a_1+a_2) \;.
$$
The middle coordinate of this weight, $a_1+a_2$, exceeds the sum of the other two coordinates, which is $a_1$. Hence, from our knowledge of the $SL_2$-ample cone for diagonal embeddings, we deduce that $\lambda_1\notin C^{\hat L'_1}(Z)$. (Formally, we describe the $\hat G$-maple cone on the next section, so this example could wait, but the case of $SL_2\subset SL_2^{\times 3}$ this follows simply from the Clebsch-Gordon rule.) Hence $\hat G P_{\hat\epsilon_1} x_w$ is not a Kirwan-Ness stratum for $\lambda$ and, by continuity, for weights in a neighbourhood of $\lambda$ in $\Lambda_\R$.
\end{example}

\subsection{The $\hat G$-ample and -movable cones}

We have seen that the Kirwan-Ness strata of the unstable locus are of the form $\hat GP_\xi x_{w\sigma_\xi}$ for stratifying pairs $(\xi,w)\in \hat\Gamma^+\times W$. The dimension of such stratum is given by $\dim \hat G P_\xi x_{w\sigma_\xi}=\hat r_\xi+\dim P_\xi x_w$. It is convenient to consider the pairs $(\xi,w)$ for which the dimension condition is satisfied; note that this property concerns just the action of $\hat G$ on $G/B$ and does not refer to any $\lambda$.

\begin{defin}
We call a pair $(\xi,w)\in \hat\Gamma^+\times W$ of a dominant OPS of $\hat G$ and a Weyl group element of $G$ a fit pair, if $\dim \hat R_\xi^-P_\xi x_{w\sigma_\xi}=\dim P_\xi x_{w\sigma_\xi} + \dim \hat R_\xi^-$. We denote the set of fit pairs, with the additional requirement that $w$ is the $B^{\sigma_\xi}$-shortest element in its left $W_\xi$-coset, by 
$$
\Xi\mk W_{\rm fit} = \{(\xi,w)\in \hat\Gamma^{+}\times W: w\in {^\xi W}, {\rm codim}_X \hat G P_\xi x_{w\sigma_\xi} = r_\xi-\hat r_\xi - l_\xi(w)\} \;.
$$
For fixed $\xi\in \hat\Gamma^+$, we denote by ${^\xi W}_{\rm fit}$ the set of elements in ${^\xi W}$ forming a fit pair $(\xi,w)$; for $l\in \N$, we denote ${^{\xi}W}_{\rm fit}(l)$ the subset with $l_\xi(w)=l$.
\end{defin}

\begin{rem} Let $\Delta=\Delta^+\sqcup\Delta^-$ be a root system split into positive and negative parts. It is well known that Weyl group elements are uniquely determined by the sets of positive roots they send to negatives. For $w\in W$, the set $\Phi_w=\Delta^+\cap w^{-1}\Delta^-$ is called the inversion set and the set $\Psi_w=\Delta^-\cap w\Delta^+$ the inverted set. We have $l(w)=\#\Psi_w$ and $\Psi_w=-\Phi_{w^{-1}}$. For a given $\xi\in\Gamma^+\setminus\{0\}$, the decomposition $\Delta=\Delta(\mk l_\xi)\sqcup\Delta(\mk r_\xi^+)\sqcup\Delta(\mk r_\xi^-)$ is invariant under the action of $W_\xi$. An element $\tau\in W_\xi$ is determined by its relative inverted set $\Delta^-(\mk l_\xi)\cap w\Delta^+(\mk l_\xi)$. Also, we have $\Psi_{\tau w}\cap\Delta(\mk r_\xi^-)=\tau(\Psi_w\cap\Delta(\mk r_\xi))$; in particular, these to sets have the same cardinality. Hence the shortest element in a coset $W_\xi w$ is characterized by the property $\Psi_{w}\subset \Delta(\mk r_\xi^-)$, while the longest is 
characterized by $\Psi_{w}\supset \Delta^-(\mk l)$.
\end{rem}

\begin{lemma}\label{Lemma InvertedSets}
Let $\Delta=\Delta^+\sqcup\Delta^-$ be a root system split into positive and negative parts and let $w\in W$. Then there exists an order on the inverted set $\Psi_w=\{\beta_1,...,\beta_l\}$ such that, upon setting $w_j=s_{\beta_j} \dots s_{\beta_l}$, for $j=1,...,l$ and $w_{l+1}=1$, one gets
$$
w=w_1=s_{\beta_1} \dots s_{\beta_l} \;,\; \Psi_{w_{j+1}}=\Psi_{w}\setminus\{\beta_{1},...,\beta_j\} \;, l(w_{j+1})=l-j. \;
$$
Moreover, the root $\beta_j$ is simple for $w_j\Delta^+$.
\end{lemma}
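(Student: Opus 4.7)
The plan is to proceed by induction on $l=l(w)$; the base case $l=0$ is vacuous. For the inductive step, I would first produce the leading root $\beta_1$ and reduce the statement to a Weyl group element $w_2$ of length $l-1$, then apply the induction hypothesis to $w_2$ to obtain the remaining $\beta_j$'s.

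The key preliminary observation is that the simple roots of the positive system $w\Delta^+$ are precisely $\{w\alpha : \alpha \text{ simple in } \Delta^+\}$. Hence the requirement that $\beta_1\in\Psi_w$ be simple for $w\Delta^+$ amounts to choosing a simple root $\alpha$ of $\Delta^+$ with $w\alpha\in\Delta^-$ and setting $\beta_1=w\alpha$. Such an $\alpha$ exists as soon as $l(w)\geq 1$, since otherwise $w$ would preserve $\Delta^+$ and hence be the identity.

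Next I would set $w_2=s_{\beta_1}w$ and, using the identity $s_{w\alpha}=ws_\alpha w^{-1}$, rewrite this as $w_2=ws_\alpha$. Since $\alpha$ is simple with $w\alpha<0$, the standard exchange condition gives $l(w_2)=l-1$, so $w_2 \alpha = -w\alpha > 0$. The standard length-additive inversion-set identity $\Phi(uv)=\Phi(v)\sqcup v^{-1}\Phi(u)$, applied to the factorisation $w=w_2 s_\alpha$ with $l(w)=l(w_2)+1$, yields $\Phi(w)=\{\alpha\}\sqcup s_\alpha\Phi(w_2)$. Acting by $w$ on both sides and using $ws_\alpha=w_2$, I obtain $\Psi_w=\{\beta_1\}\sqcup\Psi_{w_2}$, so in particular $\Psi_{w_2}=\Psi_w\setminus\{\beta_1\}$.

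Finally, by the inductive hypothesis applied to $w_2$ (of length $l-1$), there is an ordering $\beta_2,\ldots,\beta_l$ of $\Psi_{w_2}$ together with partial products $w_j=s_{\beta_j}\cdots s_{\beta_l}$ (for $2\leq j\leq l+1$) satisfying the stated factorisation, length, inversion-set and simplicity properties. Combining this with $w_1=s_{\beta_1}w_2=w$, with $\Psi_{w_2}=\Psi_w\setminus\{\beta_1\}$, and with the already-verified fact that $\beta_1$ is simple for $w_1\Delta^+=w\Delta^+$, yields the full conclusion of the lemma. There is no real obstacle: the argument only bundles together two standard facts about Weyl groups, and the only point requiring a little care is tracking how the inversion-set convention $\Psi_w=w\Phi(w)$ transforms under the left multiplication by $s_{\beta_1}$, which is handled cleanly by the reflection identity $s_{w\alpha}=ws_\alpha w^{-1}$.
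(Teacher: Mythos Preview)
Your proof is correct and follows essentially the same inductive approach as the paper: both choose $\beta_1=w\alpha$ for a simple root $\alpha$ with $w\alpha\in\Delta^-$, set $w_2=s_{\beta_1}w$, and verify $\Psi_{w_2}=\Psi_w\setminus\{\beta_1\}$. The only cosmetic difference is that you invoke the standard length-additive identity $\Phi(uv)=\Phi(v)\sqcup v^{-1}\Phi(u)$ together with $\Psi_w=w\Phi_w$, whereas the paper establishes the same set identity by direct manipulation of the intersections $s_\beta w\Delta^+\cap\Delta^-\cap w\Delta^{\pm}$.
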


\begin{proof}
We shall proceed by induction on the length $l=l(w)$. Let $\Pi$ be the set of simple roots in $\Delta^+$, so that $w\Pi$ is the set of simple roots in $w\Delta^+$. Let $\beta=\beta_1\in w\Pi\cap\Delta^-$; such an element exists, as long as $l>0$. Consider $w_2=s_{\beta}w$. Note that $-\beta\in w\Delta^-\cap\Delta^+$, so that, if $U_{-\beta}\subset B$ denotes the one-parameter unipotent subgroup of the root $-\beta$, then $\ol{U_{-\beta}x_w}=U_{-\beta}x_w\sqcup\{x_{w_2}\}$ and so $\ol{Bx_w}\supset Bx_{w_2}$. Since $\beta$ is simple for $w\Delta^+$, we have $s_\beta w\Delta^+\cap w\Delta^-=\{-\beta\}$. Hence $\{\beta\} = s_\beta w\Delta^- \cap w\Delta^+= (\Delta^-\setminus\Psi_{s_\beta w})\cap \Psi_w$. On the other hand,
\begin{align*}
\Psi_{s_\beta w} & = s_\beta w \Delta^+\cap\Delta^- \\
 & = (s_\beta w\Delta^+\cap\Delta^- \cap w\Delta^-) \cup (s_\beta w\Delta^+\cap\Delta^-\cap w\Delta^+) \\
 & = \emptyset \cup \Psi_{s_\beta w}\cap\Psi_{w}  \subset \Psi_{w} \;.
\end{align*}
Thus $\Psi_w = \{\beta\}\cup\Psi_{w_2}$. By induction on $l$ based on the trivial case $l=0$, i.e., $w=1$, we obtain the statement of the lemma.
\end{proof}

\begin{lemma}\label{Lemma SeqFit}
Let $(\xi,w)\in \Xi\mk W_{\rm fit}$ be a fit pair and $l=l_\xi(w)$. Then there exists a sequence $w=w_1...,w_{l+1}=1$ in ${^\xi W}_{\rm fit}$ with $l_\xi(w_{j+1})=l-j$ and
$$
\ol{P_\xi x_{w_j\sigma_\xi}}\supset P_\xi x_{w_{j+1}\sigma_\xi} \quad,\quad {\rm codim}_X \hat G P_\xi x_{w_j\sigma_\xi} = r_\xi - \hat r_\xi - l+j+1 \;. 
$$
If $\lambda\in\Lambda^{++}$ and $w\sigma_\xi\lambda(\xi)\geq 0$, then $w_j\sigma_\xi\lambda(\xi)>0$ for $j\geq 2$.
\end{lemma}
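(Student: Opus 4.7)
The plan is to construct the sequence by iterated application of Lemma \ref{Lemma InvertedSets} to $w$, applied with $\Delta^+$ the positive root system of $B^{\sigma_\xi}$ rather than of $B$. This orders the inverted set $\Psi_w=\{\beta_1,\dots,\beta_l\}$ and produces $w_j=s_{\beta_j}\cdots s_{\beta_l}$ with $w_{l+1}=1$, $w_{j+1}=s_{\beta_j}w_j$, and $l_\xi(w_j)=l-j+1$, each $\beta_j$ being simple for $w_j\sigma_\xi\Delta^+$ and lying in the $B^{\sigma_\xi}$-negative roots.

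Two of the required properties follow at once. Since $w\in{}^\xi W$ is equivalent to $\Psi_w\subset\Delta(\mk r_\xi^-)$, the monotonicity $\Psi_{w_j}=\{\beta_j,\dots,\beta_l\}\subset\Psi_w$ gives $w_j\in{}^\xi W$ for every $j$, and in particular $\beta_j(\xi)<0$. The Schubert containment $\overline{P_\xi x_{w_j\sigma_\xi}}\supset P_\xi x_{w_{j+1}\sigma_\xi}$ follows from the $B^{\sigma_\xi}$-Schubert step established inside the proof of Lemma \ref{Lemma InvertedSets} (its argument is insensitive to the choice of Borel containing $T$), combined with $B^{\sigma_\xi}\subset P_\xi$ and the $P_\xi$-saturation of $P_\xi$-orbit closures.

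The main obstacle is propagating the fit condition along the chain. I would first reformulate fitness infinitesimally as $\hat{\mk r}_\xi^-\cap\mk b^{w\sigma_\xi}=0$: using $\hat{\mk g}\cap\mk p_\xi=\hat{\mk p}_\xi$ together with $\mk r_\xi^-\cap\mk p_\xi=0$ and the $T$-stability of $\mk b^{w\sigma_\xi}$, one reduces the intersection of $\hat{\mk r}_\xi^-$ with $\mk p_\xi+\mk b^{w\sigma_\xi}$ (the tangent data controlling $\dim\hat G P_\xi x_{w\sigma_\xi}$) to the intersection with $\mk b^{w\sigma_\xi}$ alone. Then an inductive step is available: the $T$-root spaces of $\mk b^{w_{j+1}\sigma_\xi}$ differ from those of $\mk b^{w_j\sigma_\xi}$ only in removing $\mk g_{\beta_j}$ and adjoining $\mk g_{-\beta_j}$; since $\beta_j(\xi)<0$, the new weight $-\beta_j$ has positive $\xi$-value and cannot occur in the $T$-decomposition of any element of $\hat{\mk r}_\xi^-\subset\mk r_\xi^-$. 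Thus any $v\in\hat{\mk r}_\xi^-\cap\mk b^{w_{j+1}\sigma_\xi}$ is automatically supported on $T$-root spaces of $\mk b^{w_j\sigma_\xi}$, so $v\in\hat{\mk r}_\xi^-\cap\mk b^{w_j\sigma_\xi}=0$ by the inductive hypothesis. The codimension formula then reads off from fitness of $(\xi,w_j)$ and $l_\xi(w_j)=l-j+1$.

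For the weight inequality, expand $w_{j+1}\sigma_\xi\lambda=w_j\sigma_\xi\lambda-\langle w_j\sigma_\xi\lambda,\beta_j^\vee\rangle\beta_j$ and evaluate at $\xi$. Strict dominance of $\lambda$ prevents $\langle w_j\sigma_\xi\lambda,\beta_j^\vee\rangle=\langle\lambda,\sigma_\xi^{-1}w_j^{-1}\beta_j^\vee\rangle$ from vanishing, and $\beta_j(\xi)<0$; combined with the Bruhat-monotonicity observation recalled inside the proof of Lemma \ref{Lemma OPSandParabOrbit} (which gives the weak inequality $w_{j+1}\sigma_\xi\lambda(\xi)\geq w_j\sigma_\xi\lambda(\xi)$), this upgrades to a strict increase at each step. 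Starting from $w_1\sigma_\xi\lambda(\xi)=w\sigma_\xi\lambda(\xi)\geq 0$ then yields $w_j\sigma_\xi\lambda(\xi)>0$ for all $j\geq 2$.
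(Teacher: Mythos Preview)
Your overall plan coincides with the paper's: apply Lemma \ref{Lemma InvertedSets} (for the positive system of $B^{\sigma_\xi}$) to produce the chain $w=w_1,\dots,w_{l+1}=1$, observe $\Psi_{w_{j+1}}\subset\Psi_{w_j}\subset\Delta(\mk r_\xi^-)$ so that all $w_j\in{}^\xi W$, and use $\beta_j(\xi)<0$ together with strict dominance of $\lambda$ to get the strict weight increase. The containment of $P_\xi$-orbit closures and the weight inequality are argued exactly as in the paper.

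There is, however, a gap in your treatment of the fit condition. You ``reformulate fitness infinitesimally as $\hat{\mk r}_\xi^-\cap\mk b^{w\sigma_\xi}=0$'', but this is only the injectivity of the differential of $\hat G\times_{\hat P_\xi}P_\xi x_{w\sigma_\xi}\to X$ at the \emph{particular} point $x_{w\sigma_\xi}$. That condition certainly implies fitness (generic finiteness), and your inductive step correctly shows $\hat{\mk r}_\xi^-\cap\mk b^{w_{j+1}\sigma_\xi}\subset\hat{\mk r}_\xi^-\cap\mk b^{w_j\sigma_\xi}$. The trouble is the base case: from the hypothesis that $(\xi,w)$ is fit you would need the converse implication, i.e.\ that the differential is already injective at the $T$-fixed point, and this is not justified --- the $\hat R_\xi^-$-stabilizer can jump up at special points of $L_\xi x_{w\sigma_\xi}$.

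The paper sidesteps this by establishing the nested chain for the \emph{full} $R_\xi^-$-stabilizers at the $T$-fixed points, $(R_\xi^-)_{x_{w_{j+1}\sigma_\xi}}\subset(R_\xi^-)_{x_{w_j\sigma_\xi}}$ (which is exactly what your root-space argument proves, since the Lie algebra of $(R_\xi^-)_{x_{w_j\sigma_\xi}}$ has root set $\Psi_{w_j}$), and only then passes to $\hat R_\xi^-$ via a \emph{generic} $L_\xi$-translate: fitness of $(\xi,w)$ gives some $l\in L_\xi$ with $\hat R_\xi^-\cap l(R_\xi^-)_{x_{w\sigma_\xi}}l^{-1}=1$, and the same $l$ then witnesses triviality of the $\hat R_\xi^-$-stabilizer of $l\,x_{w_j\sigma_\xi}$ for every $j$, because $l(R_\xi^-)_{x_{w_j\sigma_\xi}}l^{-1}\subset l(R_\xi^-)_{x_{w\sigma_\xi}}l^{-1}$. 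Inserting this generic $l$ into your argument closes the gap.
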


\begin{proof}
The proof is based on Lemma \ref{Lemma InvertedSets}. We apply it to $w$ with respect to the system of positive roots $\sigma_\xi\Delta^+$ associated to $\xi$. For the first part of the lemma, we may assume, without loss of generality, that $\sigma_\xi=1$, so that $\xi\in\Gamma^+\cap\hat\Gamma^+$. The case $l=0$ being trivial, we assume $l\geq 1$. Note that the condition $w\in {^\xi W}$ is equivalent to $\Psi_{w}\subset \Delta(\mk r_\xi^-)$. Hence, if $w=w_1,...,w_{l+1}=1$ is a sequence as obtained from Lemma \ref{Lemma InvertedSets}, then $\emptyset\subset \Psi_{w_{l}}\subset ... \subset\Psi_{w_2}\subset \Psi_w\subset\Delta(\mk r_\xi^-)$. In particular, all $w_j$ belong necessarily to ${^\xi W}$. It follows that the $R_\xi^-$-stabilizers of the points $x_{w_j}$ are nested, i.e.,
$$
1 = (R_\xi^-)_{x_{1}} \subset (R_\xi^-)_{x_{w_l}} \subset ... \subset (R_\xi^-)_{x_{w}}
$$
The pair $(\xi,w_j)$ is fit if and only if the generic $\hat R_\xi^-$-stabilizer on $L_\xi x_{w_j}$ is trivial. Recall that $\hat R_\xi^-\subset R_\xi^-$, and also that the $R_\xi^-$-stabilizers on $L_\xi x_{w}$ are $L_\xi$-conjugate, so $L_\xi$-conjugate to $(R_\xi^-)_{x_w}$. Thus the above chain of inclusions implies that, on each $L_\xi x_{w_j}$, the generic $\hat R_\xi^-$-stabilizer is trivial. Hence $w_{j}\in {^\xi W}_{\rm fit}(l-j+1)$. This proves the first statement.

For the second statement of the lemma, let us use the notation from the proof of Lemma \ref{Lemma InvertedSets} and note that
$$
s_{\beta} w \sigma_\xi\lambda = -(positive\; number)\beta (\xi) + w\sigma_\xi\lambda(\xi) > w\sigma_\xi\lambda(\xi) \;.
$$
\end{proof}

\begin{thm}\label{Theo Ck}
For $k\geq1$, $C_k^{\hat G}(X)\subset \Lambda_\R^+$ is a rational polyhedral cone, and can be expressed in the following ways:
\begin{align*}
C_k^{\hat G}(X) & = \ol{\{\lambda\in\Lambda_\Q^{++}: {\rm codim}_X X^{us}(\lambda)\geq k \}} \\
 & = \{\lambda\in\Lambda_\R^+:\; \lambda(\sigma_j^{-1}w^{-1}\xi_j)\leq 0 \;,\; w\in {^{\xi_j}W}_{\rm fit}(r_j-\hat r_j-k+1) , j=1,...,q \}\;.
\end{align*}
In the relative topology of $\Lambda_\R^+$, we have $C_{k+1}^{\hat G}(X)\subset {\rm Int}\, C_k^{\hat G}(X)$.

In particular, the $\hat G$-ample and $\hat G$-movable cones are obtained for $k=1,2$ as
\begin{align*}
& C^{\hat G}(X) = \{\lambda\in\Lambda_\R^+: {^{\xi_j}W}_{\rm fit}(r_j-\hat r_j)\subset W^{0-}(\sigma_j\lambda,\xi_j) \;,\; \forall j \} \;,\\
& {\rm Mov}^{\hat G}(X) = \{\lambda\in\Lambda_\R^+ : {^{\xi_j}W}_{\rm fit}(r_j-\hat r_j+1)\subset W^{0-}(\sigma_j\lambda,\xi_j) \;,\;\forall j\} \;.
\end{align*}
\end{thm}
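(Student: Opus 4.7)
My plan is to deduce the theorem from Theorem \ref{Theo KirwanStratFlag} by translating the Kirwan--Ness codimension formula into a linear inequality description on $\Lambda_\R^+$. The strategy has three moving parts: (a) reduce general stratifying pairs $(\xi,w)\in\mk S\mk W_\lambda$ to fit pairs at a cubicle ray generator $\xi_j$ via Lemma \ref{Lemma XimaxisXicum}; (b) reduce the length $l_{\xi_j}(w')$ via Lemma \ref{Lemma SeqFit} to the critical value $r_j-\hat r_j-k+1$, preserving positivity $w'\sigma_j\lambda(\xi_j)>0$; (c) express the resulting codimension criterion as the stated inequalities. Let $\Omega_k$ denote the right-hand side of the claimed equality. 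I establish the equality on the dense subset $\Lambda^{++}\subset\Lambda_\R^+$ and extend by closure, noting that $\Omega_k$ is rational polyhedral by construction.

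For $C_k\cap\Lambda^{++}\subset\Omega_k$, suppose ${\rm codim}_X X^{us}(\lambda)\geq k$ and, for contradiction, some fit pair $(\xi_j,w)$ with $l_{\xi_j}(w)=r_j-\hat r_j-k+1$ satisfies $\lambda(\sigma_j^{-1}w^{-1}\xi_j)>0$. By Lemma \ref{Lemma OPSandParabOrbit}(iii) the orbit $P_j x_{w\sigma_j}$ lies in $X^{us}_{\xi_j}(\lambda)$, and fitness forces ${\rm codim}_X\hat G P_j x_{w\sigma_j}=k-1$, contradicting the hypothesis. For the reverse inclusion, suppose ${\rm codim}_X X^{us}(\lambda)<k$; Theorem \ref{Theo KirwanStratFlag} yields a stratifying pair $(\xi,w)\in\mk S\mk W_\lambda$ with $r_\xi-\hat r_\xi-l_\xi(w)<k$ and $w\sigma_\xi\lambda(\xi)>0$. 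Writing $\xi=\sum c_i\xi_{j_i}$ as a nonnegative combination of ray generators of its cubicle (so $\sigma_{j_i}=\sigma_\xi$), positivity of the total forces $w\sigma_j\lambda(\xi_j)>0$ for some $j$, and the inclusion $P_\xi\subset P_j$ places $\hat G P_\xi x_{w\sigma_\xi}$ inside $\hat G P_j x_{w'\sigma_j}$ (with $w'\in{^{\xi_j}W}$ the shortest $W_{\xi_j}$-representative of $w$), which therefore has codimension at most $k-1$. After verifying that $(\xi_j,w')$ is itself fit, one obtains a fit pair of length $\geq r_j-\hat r_j-k+1$; Lemma \ref{Lemma SeqFit} then reduces the length to exactly $r_j-\hat r_j-k+1$ with positivity preserved, violating the $\Omega_k$-inequality.

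For the strict containment $C_{k+1}\subset{\rm Int}\,C_k$ in the relative topology of $\Lambda_\R^+$, I verify strictness of every defining $C_k$-inequality at $\lambda\in C_{k+1}\cap\Lambda^{++}$: if some $w\in{^{\xi_j}W}_{\rm fit}(r_j-\hat r_j-k+1)$ gave $\lambda(\sigma_j^{-1}w^{-1}\xi_j)=0$, then Lemma \ref{Lemma SeqFit} (whose hypothesis only requires $\geq 0$) produces a fit pair $(\xi_j,w_2)$ of length $r_j-\hat r_j-k$ with strict positivity, contradicting the $C_{k+1}$-inequality at that lower level. Strict inequalities at all defining hyperplanes give openness of $\lambda$ in $\Lambda_\R^+$, establishing the interior claim. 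The specializations to $k=1$ (ample cone) and $k=2$ (movable cone) follow by direct substitution.

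The main obstacle will be step (b) above: confirming that the fit condition is inherited by $(\xi_j,w')$ from the stratifying (hence fit) pair $(\xi,w)$, by comparing generic $\hat R_j^-$-stabilizers on the larger orbit $P_j x_{w'\sigma_j}$ to $\hat R_\xi^-$-stabilizers on $P_\xi x_{w\sigma_\xi}$ within the same cubicle. A cleaner workaround bypasses the stratifying-pair reduction entirely and works from the decomposition $X^{us}(\lambda)=\bigcup_{j,w:w\sigma_j\lambda(\xi_j)>0}\hat G P_j x_{w\sigma_j}$: every non-fit orbit has codimension strictly larger than $r_j-\hat r_j-l_{\xi_j}(w)$, so the threshold $l_{\xi_j}(w)=r_j-\hat r_j-k+1$ captures precisely the codimension-$(k-1)$ strata relevant to cutting out $C_k$.
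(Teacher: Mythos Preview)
Your derivation of the inequality description from Theorem \ref{Theo KirwanStratFlag} together with Lemma \ref{Lemma SeqFit} is essentially the paper's argument, and the reduction--to--$\xi_j$ step you flag as the ``main obstacle'' is treated just as implicitly in the paper: it simply writes the codimension as $\min\{r_\xi-\hat r_\xi-l(w):(\xi,w)\in\Xi\mk W_\lambda^+\cap\Xi\mk W_{\rm fit}\}$ and declares the description proved, without isolating the passage from arbitrary fit $\xi$ to the cubicle generators $\xi_j$. Your ``cleaner workaround'' via the decomposition $X^{us}(\lambda)=\bigcup_j\bigcup_w\hat GP_jx_{w\sigma_j}$ is in fact closer to how the paper reasons, and is the right way to sidestep the inheritance-of-fitness question; what remains to check (and neither you nor the paper makes this fully explicit) is that a maximal-dimensional irreducible component $\overline{\hat GP_jx_{w\sigma_j}}$ of $X^{us}(\lambda)$ necessarily arises from a \emph{fit} pair $(\xi_j,w)$, which follows since that component coincides with the closure of its open Kirwan--Ness stratum and the dimension formula forces equality in the fibre-bundle bound.

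Where you genuinely diverge from the paper is in the proof of $C_{k+1}^{\hat G}(X)\subset{\rm Int}\,C_k^{\hat G}(X)$. The paper interposes an auxiliary result, Lemma \ref{Lemma codim Jump 1} (the ``no jump lemma''), bounding the codimension discrepancy between adjacent $\hat T$-GIT-classes, and then runs a geometric ``layer'' argument: regular faces of $C_k$ lie in closures of chambers with codimension $k-1$, so the chambers just inside the boundary have codimension exactly $k$ and separate $C_{k+1}$ from $\partial C_k$. You instead apply Lemma \ref{Lemma SeqFit} directly: an equality $w\sigma_j\lambda(\xi_j)=0$ at level $r_j-\hat r_j-k+1$ yields, via the sequence $w\mapsto w_2$, a fit pair at level $r_j-\hat r_j-k$ with strict positivity, violating membership in $C_{k+1}$. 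This is shorter and avoids any appeal to the GIT-class structure. The trade-off is that the paper's route produces Lemma \ref{Lemma codim Jump 1} as an independent byproduct, which it advertises separately (the ``No jump lemma'' in the introduction) and uses to justify Corollary \ref{Cor Sufficient for movable chambers}; your direct argument gives the containment but not this structural statement about adjacent classes.
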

 
\begin{proof}
In our description of the Kirwan-Ness stratification of the unstable locus in Theorem \ref{Theo KirwanStratFlag}, for any given $\lambda\in\Lambda^{++}$, we have described the set of stratifying pairs in terms of the set $\mk S\mk W_\lambda$, which is a subset of the set $\Xi\mk W^+_\lambda$ (see Def. \ref{Def OPSforLeviOrbits XiWlambda}) parameterizing the unstable parabolic orbits defined by dominant OPS of $\hat G$. We have
$$
X^{us}(\lambda) = \bigcup\limits_{(\xi,w)\in\Xi\mk W^+_\lambda} \ol{\hat R_\xi^- P_\xi x_{w\sigma_\xi}}
$$
The dimension formula for the strata derived from the general stratification Theorem \ref{Theo Kirwan-Ness} implies that all stratifying pairs $(\xi,w)\in \mk S\mk W_\lambda$ are fit. Since the maximum dimension is attained at a stratifying element, we deduce that
$$
{\rm codim}_X X^{us}(\lambda) = \min\{ r_\xi-\hat r_\xi - l(w) : (\xi,w)\in \Xi\mk W_\lambda^+ \cap \Xi\mk W_{\rm fit} \} \;.
$$
Then the cone $C_k^{\hat G}(X)$ can be characterized as those $\lambda$ whose unstable locus does not contain any $(k-1)$-codimensional parabolic orbit $P_\xi x_{w\sigma_\xi}$ corresponding to a fit pair $(\xi,w)$. This means $w\sigma_\xi\lambda(\xi)\leq 0$ for all fit $(\xi,w)$ with $w\in {^\xi W}(r_\xi-\hat r_\xi-k+1)$. This proves description of $C_k^{\hat G}(X)$ given in the theorem. 

To prove the second statement, we shall use the following lemma.

\begin{lemma}\label{Lemma codim Jump 1}
Suppose that $C_1, C_2\subset \Lambda_\R^+$ are $\hat T$-GIT-classes intersecting the interior of the Weyl chamber.

(a) If $\ol{C_1}\supset C_2$, then
$$
0\leq {\rm codim}_X X^{us}(C_2) - {\rm codim}_X X^{us}(C_1)  \leq 1 \;.
$$

(b) If $C_1, C_2$ are GIT-chambers sharing a facet $C_{12}$, then
$$
| {\rm codim}_X X^{us}(C_1) - {\rm codim}_X X^{us}(C_2) | \leq 1 \;.
$$
\end{lemma}

\begin{proof}
Suppose $\ol{C_1}\supset C_2$. Then we have $X^{us}(C_1)\supset X^{us}(C_2)$. In particular, ${\rm codim}_X X^{us}(C_1) \leq {\rm codim}_X X^{us}(C_2)$. 

The change in the unstable locus $X^{us}(\lambda)$ as $\lambda$ passes from $C_1$ to $C_2$ is necessarily reflected in a change of the set of stratifying 
elements $\mk S\mk W_\lambda$. By the continuity of the bilinear pairing between $\Lambda_\R$ and $\Gamma_\R$, there exists a stratifying 
pair $(\xi,w)$, with $w\in {^\xi W}^+(C_1,\xi) \cap {^\xi W}^0(C_2,\xi)$. For any such $(\xi,w)$, we have $(\xi,w)\in\Xi\mk W_{\rm fit}$ and we 
can apply Lemma \ref{Lemma SeqFit}. The element $w_2$ produced by that lemma is fit for $\xi$, satisfies $w_2\in {^\xi W}^+(C_2,\xi)$ and has length $l(w)-1$. Thus
$$
\hat GP_\xi x_{w_2\sigma_\xi} \subset X^{us}(C_2) \quad,\quad {\rm codim}_X \hat GP_\xi x_{w_2\sigma_\xi} = {\rm codim}_X \hat GP_\xi x_{w\sigma_\xi} +1 \;.
$$
Therefore, the inequality
$$
 {\rm codim}_X \hat G X_\xi^{us}(C_2) - {\rm codim}_X \hat G X_\xi^{us}(C_1)  \leq 1
$$
holds for any $\xi$. Hence it also holds for the codimensions of the entire unstable loci. This proves part (a).

For part (b) a similar argument works. If the unstable locus changes for some $\xi\in\Xi_{\rm max}$, then there exists $w\in {^\xi W}^+(C_1,\xi) \cap {^\xi W}^0(C_{12},\xi) \cap {^\xi W}^-(C_2,\xi)$. Then we have $w_2\in {^\xi W}^+(C_{12},\xi)$ and may we proceed as above.
\end{proof}

Form the lemma we deduce that the regular faces of $C_k^{\hat G}(X)$ give $k$-dimensional unstable loci, since they are contained in the closure of some GIT-chambers from $C_{k-1}^{\hat G}(X)$. This implies that the chambers contained in $C_k^{\hat G}(X)$, whose closure intersect the regular boundary of $C_k^{\hat G}(X)$, must also have $k$-dimensional unstable loci. These chambers form then a ``layer'' isolating $C_{k+1}^{\hat G}(X)$ from the regular boundary of $C_k^{\hat G}(X)$. This completes the proof of the theorem.
\end{proof}% of theorem

\begin{example} It is not hard to show that for any $SL_2$-subgroup of $SL_3$ one has $C^{\hat G}(X)=\Lambda_\R^+$ and ${\rm Mov}^{\hat G}(X)=\emptyset$. Indeed, there are two conjugacy classes, but their Cartan subalgebras coincide, up to conjugacy, as vector spaces $\hat{\mk t}\subset \mk t$ (endowed however with different lattices $\hat\Gamma$). We have $\hat{\mk t}=\R\rho^{\vee}$, and it suffices to evaluate weights on $\xi=\rho^\vee=\alpha_1^\vee+\alpha_2^\vee$, which is regular, so $\mk r_\xi=\mk n=3$, $\hat r_\xi=1$. Since $\hat L_\xi\cong\C^*$ and $\hat L'_\xi=1$, all pairs $(\xi,w)\in\Xi\mk W_\lambda^+$ are stratifying. A simple computation yields
$$
\Lambda^{++} = \{\lambda\in\Lambda: W^+(\lambda,\xi) = W(l\leq 1)=\{1,s_1,s_2\} \} \;,\; X^{us}(\lambda)=\hat G B_{x_{s_1}}\cup \hat GB_{x_{s_2}} \;.
$$
Thus the facets of $C^{\hat G}(X)$ constructed by our theorem coincide with the walls of the Weyl chamber, and the interior constitutes a single GIT-chamber with unstable locus of codimension 1.
\end{example}

Since we shall be interested in $\hat G$-movable chambers, we record the following immediate corollary.

\begin{cor}\label{Cor Sufficient for movable chambers}
If $C_k^{\hat G}(X)\ne \emptyset$, then $X$ admits GIT-chambers where the unstable locus has codimension $k-1$. In particular, if there exists $\lambda\in\Lambda^{++}$ with ${\rm codim}_X X^{us}(\lambda)>2$, then $X$ admits $\hat G$-movable chambers. 
\end{cor}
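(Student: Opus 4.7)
The plan is to deduce this corollary directly from the nesting $C_{k+1}^{\hat G}(X)\subset{\rm Int}\,C_k^{\hat G}(X)$ (in the relative topology of $\Lambda_\R^+$) already established in Theorem~\ref{Theo Ck}, together with the fact recalled in Section~2 (following \cite{Seppanen-GlobBranch}) that, for a semisimple subgroup $\hat G$ acting on a complete flag variety, the $\hat G$-chambers coincide with the full-dimensional GIT classes in $C^{\hat G}(X)$.

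First I would observe that since $C_k^{\hat G}(X)$ is nonempty and contained in the relative interior of $C_{k-1}^{\hat G}(X)$ inside $\Lambda_\R^+$, the cone $C_{k-1}^{\hat G}(X)$ itself must have nonempty interior in $\Lambda_\R^+$, hence full dimension relative to the Weyl chamber. Next I would argue that, since $C_{k-1}^{\hat G}(X)\subset C^{\hat G}(X)$ is a finite union of closures of GIT classes, at least one full-dimensional GIT class $C$ must lie in it. By the characterization recalled above, this $C$ is a $\hat G$-chamber, and by the very definition of $C_{k-1}^{\hat G}(X)$ we then have ${\rm codim}_X X^{us}(C)\ge k-1$.

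For the ``in particular'' statement I would simply specialize to $k=3$: the hypothesis exhibits a point of $C_3^{\hat G}(X)$, so the first part produces a $\hat G$-chamber contained in $C_2^{\hat G}(X)={\rm Mov}^{\hat G}(X)$, which is by definition a $\hat G$-movable chamber. There is essentially no new obstacle to overcome, since all the substantive work---the strictness of the nesting and the no-jump lemma underlying it---has already been carried out in Theorem~\ref{Theo Ck}. The only subtlety I would be careful to spell out is that ``full-dimensional'' here means full-dimensional inside $\Lambda_\R^+$ (equivalently, containing regular weights in its relative interior), which is the correct notion since the characterization of chambers as full-dimensional GIT classes is itself relative to the Weyl chamber.
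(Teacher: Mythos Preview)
Your proposal is correct and matches the paper's approach: the paper records this as an ``immediate corollary'' of Theorem~\ref{Theo Ck} without supplying a separate argument, and what you have written is precisely the natural unpacking of that immediacy via the strict nesting $C_k^{\hat G}(X)\subset{\rm Int}\,C_{k-1}^{\hat G}(X)$ together with the identification of $\hat G$-chambers with full-dimensional GIT classes.

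One small remark: as stated, your argument yields a chamber $C\subset C_{k-1}^{\hat G}(X)$ with ${\rm codim}_X X^{us}(C)\ge k-1$, which is exactly what is needed for the ``in particular'' clause about $\hat G$-movable chambers. If one reads the first sentence of the corollary as asserting codimension \emph{equal to} $k-1$, you can sharpen the conclusion by invoking the ``layer'' observation at the end of the proof of Theorem~\ref{Theo Ck}: the chambers in $C_{k-1}^{\hat G}(X)$ whose closures meet its regular boundary have unstable locus of codimension exactly $k-1$. Either way, no new ingredient beyond Theorem~\ref{Theo Ck} is required.
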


\begin{rem}
In our previous work, \cite{Seppa-Tsa-Principal}, we have considered the case where $\hat G$ if a principal $SL_2$-subgroup $G$. We have shown that $\hat G$-movable chambers exist, except for a small number of degenerate cases for $G$. Under some more assumptions, e.g. $G$ not having simple factors of rank 1 or 2, the entire ample cone is $\hat G$-movable.
\end{rem}

\begin{rem}
There is an obvious upper bound for the codimension of the unstable locus, which can be deduced from the above theorem:
$$
\max\{{\rm codim}_X X^{us}(\lambda) : \lambda\in\Lambda^{++}\}  \leq \min\{r_\xi-\hat r_\xi: \xi\in\Xi_{\rm max} \} \;.
$$
In particular, we have $C^{\hat G}(X)=0$, when this upper bound is zero. This holds for instance for the natural embedding $\hat G=Sp_{2n}\subset SL_{2n}=G$, as well as in in case $\hat{\mk g}$ contains simple ideals of $\mk g$. 
\end{rem}

\begin{example}\label{Examp rho diag}
Let us consider the case $\lambda=\rho=\frac12 \sum\limits_{\alpha\in\Delta^+} \alpha$, the smallest strictly dominant weight. We shall estimate the codimension of $X^{us}(\rho)$ in terms of invariants of the embedding $\hat G\subset G$ and thus give a criterion for existence of $\hat G$movable chambers. We shall give some more precise calculations for diagonal embeddings $\hat G \subset \hat G^{\times k}=G$, where $\rho = (\hat\rho,...,\hat \rho)$, specifically for $\hat G=SL_{m}$.

Let us begin with the general remark, that for $w\in W$, we have $w\rho = \frac12 (\langle \Phi_{w^{-1}}^c \rangle - \langle \Phi_{w^{-1}} \rangle)$, where $\langle\Phi\rangle$ denotes the sum of the elements of any subset $\Phi\in\Lambda$. Evaluated at any $\xi\in\Gamma^+$ this gives
$$
w\rho(\xi) = \frac12 (\langle \Phi_{w^{-1}}^c \rangle - \langle \Phi_{w^{-1}} \rangle)(\xi) = \frac12 (\langle \Phi_{w^{-1}}^c\cap \Delta(\mk{r}_\xi) \rangle - \langle \Phi_{w^{-1}}\cap \Delta(\mk{r}_\xi) \rangle))(\xi) \;.
$$
Since $\Phi_{w^{-1}}^c=\Phi_{w_0w^{-1}}$, we conclude that either $w\rho(\xi)=ww_0\rho(\xi)=0$, or exactly one of $w$ and $ww_0$ 
belongs to $W^+(\rho,\xi)$ while the other one belongs to $W^-(\rho,\xi)$. Also $w\in {^\xi W}$ if and only if $\Phi_{w^{-1}}\subset \Delta(\mk r_\xi)$. 
Put 
\begin{align}
a_\xi=\min\{\alpha(\xi):\alpha\in\Delta(\mk r_\xi)\} \quad b_\xi=\max\{\alpha(\xi):\alpha\in\Delta(\mk r_\xi)\}. \label{E: rootvalOPS}
\end{align}
Then,
\begin{align*}
 a_\xi(r_\xi-l(w)) - b_\xi l(w) \leq 2w\rho(\xi) \leq b_\xi (r_\xi - l(w)) - a_\xi l(w)  \;.   
\end{align*}
It follows that, for $w\in {^\xi W}$,
\begin{align*}
& l(w) < \frac{a_\xi r_\xi}{a_\xi+b_\xi} \;\Lw\; w\in {^\xi W}^+(\rho,\xi) \;;\\
& l(w) \geq \frac{b_\xi r_\xi}{a_\xi+b_\xi} \;\Lw\; w\in {^\xi W}^{0-}(\rho,\xi)
\end{align*}
Hence
$$
\frac{a_\xi r_\xi}{a_\xi+b_\xi} - 1  \leq l_{\xi,\rho}^+ < \frac{b_\xi r_\xi}{a_\xi+b_\xi} 
$$
and
$$
{\rm codim}_X X^{us}(\rho) \geq  \min\limits_{\xi\in\Xi_{\rm max}}\{r_\xi-\hat r_\xi - l_{\xi,\rho}^+ \} > \min\limits_{\xi\in\Xi_{\rm max}} \{\frac{a_\xi}{a_\xi+b_\xi}  r_\xi - \hat r_\xi \} \;.
$$
\end{example}

We can use this concrete case, where the codimension is expressed in terms of structural invariants of the embedding $\hat G\subset G$, to obtain the following general criterion for existence of $\hat G$-movable chambers.

\begin{prop}
Given an embedding $\hat G\subset G$, if $\min\limits_{\xi\in\Xi_{\rm max}} \{\frac{a_\xi}{a_\xi+b_\xi}  r_\xi - \hat r_\xi \}\geq 2$ (cf. \ref{E: rootvalOPS}), 
then the $\hat G$-ample cone on $X$ admits $\hat G$-movable chambers.
\end{prop}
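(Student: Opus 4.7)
The plan is to take $\lambda = \rho$ and combine the codimension estimate developed in Example \ref{Examp rho diag} with Corollary \ref{Cor Sufficient for movable chambers}. Since $\rho \in \Lambda^{++}$, the line bundle $\mc L_\rho$ is ample, so $\rho$ lies in the $\hat G$-ample cone and the relevant GIT machinery applies directly.

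First I would recall from Example \ref{Examp rho diag} that, for each $\xi \in \Xi_{\rm max}$, the strict bound $l_{\xi,\rho}^+ < \tfrac{b_\xi r_\xi}{a_\xi + b_\xi}$ follows from comparing $w\rho(\xi)$ to the extreme root values $a_\xi, b_\xi$ on the inverted set of $w$. Substituting this into the dimension formula of Theorem \ref{Theo KirwanStratFlag} yields the strict inequality
\[
{\rm codim}_X X^{us}(\rho) \;>\; \min_{\xi \in \Xi_{\rm max}} \left\{ \frac{a_\xi}{a_\xi + b_\xi}\, r_\xi - \hat r_\xi \right\}.
\]
Under the standing hypothesis $\min_\xi \{\tfrac{a_\xi}{a_\xi+b_\xi} r_\xi - \hat r_\xi\} \geq 2$, this forces ${\rm codim}_X X^{us}(\rho) > 2$; since the codimension is an integer, we get ${\rm codim}_X X^{us}(\rho) \geq 3$, equivalently $\rho \in C_3^{\hat G}(X)$.

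The conclusion is then immediate from Corollary \ref{Cor Sufficient for movable chambers}, which guarantees that nonemptiness of $C_3^{\hat G}(X)$ implies the existence of GIT-chambers in $C^{\hat G}(X)$ whose unstable locus has codimension at least $2$, i.e., $\hat G$-movable chambers. Equivalently, by Theorem \ref{Theo Ck} the inclusion $C_3 \subset {\rm Int}\,C_2$ places $\rho$ in the relative interior of the movable cone ${\rm Mov}^{\hat G}(X) = C_2^{\hat G}(X)$, and then any full-dimensional GIT-class whose closure contains $\rho$ is a $\hat G$-movable chamber.

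The substance of the argument is already contained in Example \ref{Examp rho diag}; the only subtle point is keeping the inequality strict, so that the integer codimension satisfies ${\rm codim}_X X^{us}(\rho) > 2$ needed to invoke Corollary \ref{Cor Sufficient for movable chambers}. No genuinely new geometric obstacle arises.
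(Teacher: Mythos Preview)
Your proposal is correct and follows exactly the approach the paper intends: the proposition is stated immediately after Example \ref{Examp rho diag} precisely because the strict inequality ${\rm codim}_X X^{us}(\rho) > \min_{\xi\in\Xi_{\rm max}} \{\tfrac{a_\xi}{a_\xi+b_\xi} r_\xi - \hat r_\xi\}$ derived there, combined with Corollary \ref{Cor Sufficient for movable chambers}, gives the result. One small quibble: the sentence ``so $\rho$ lies in the $\hat G$-ample cone'' is not justified a priori (ampleness alone does not place $\rho$ in $C^{\hat G}(X)$), but this is harmless since all you actually use is $\rho\in\Lambda^{++}$, and membership in $C^{\hat G}(X)$ follows a posteriori from the codimension bound.
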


Let us consider now a diagonal embedding $\hat G\subset\hat G^{\times k}=G$, Any Weyl chamber of $\hat G$ is contained, as a diagonal, in a Weyl chamber of $\hat G$. The (maximal) Levi subgroups of $G$ defined by nonzero elements of $\hat\Gamma^+$ are the $k$-fold products of (maximal) Levi subgroups of $\hat G$. We have $r_\xi=k\hat r_\xi$. The OPS defining the maximal Levi subgroups are the fundamental coweights $\Xi_{\rm max}=\{\hat\xi_1,...,\hat\xi_{\hat\ell}\}$. Furthermore, for $\xi_j\in\Xi_{\rm max}$ we have $\hat a_{\xi_j}=\hat a_{\xi_j}=2$, $b_{\xi_j}=\hat b_{\xi_j} = 2m_j$, where $m_j$ is the $j$-th coefficient of the highest root of $\hat G$ expressed as a sum of simple roots, i.e., $\tilde{\alpha}=\sum m_j \hat\alpha_j$. Hence
$$
{\rm codim}_X X_{\xi_j}^{us}(\rho) > \frac{1}{1+m_j}  k\hat r_j - \hat r_j \;.
$$
We can also see that the codimension of the unstable locus tends to $\infty$ when $k\to\infty$. Concerning $\hat G$-movable chambers, one can easily calculate that
\begin{align*}
k\geq \max\{ \frac{\hat r_j+2}{\hat r_j} (1+m_j) : j=1,...,\hat\ell\}  & \;\Lw\; {\rm codim}_X X^{us}(\rho)>2 \\
 & \;\Lw\; \exists \;\hat G-movable \; chambers\;.
\end{align*}

In particular, one can deduce the following.

\begin{prop}
If $\hat G$ is a product of classical groups and $\hat G\subset G=\hat G^{\times k}$ is a diagonal embedding with $k\geq 5$, then the $\hat G$-ample cone admits $\hat G$-movable chambers.
\end{prop}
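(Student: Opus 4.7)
The plan is to specialize the numerical criterion of the preceding proposition to the diagonal setting and then verify it type by type. In a diagonal embedding $\hat G\hookrightarrow \hat G^{\times k}=G$ all Weyl chambers of $\hat G$ fit into a single cubicle of $G$ (case (d) of Remark \ref{Rem SingleCubicle}), so $\Xi_{\rm max}$ consists precisely of the diagonal images of the fundamental coweights $\hat\xi_1,\ldots,\hat\xi_{\hat\ell}$ of the simple factors of $\hat G$. For $\xi_j$ the diagonal of such a coweight, the parabolic $P_{\xi_j}=\hat P_j^{\times k}$ has dimension $r_{\xi_j}=k\hat r_j$, while $\hat P_{\hat\xi_j}$ is the corresponding maximal parabolic of $\hat G$ of dimension $\hat r_j$. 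The extremal positive root values on $\hat\xi_j$ are $a_{\xi_j}=2$ and $b_{\xi_j}=2m_j$, with $m_j$ the coefficient of $\hat\alpha_j$ in the highest root of the ambient simple factor.

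Substituting these quantities into the sufficient condition $\frac{a_\xi}{a_\xi+b_\xi}r_\xi-\hat r_\xi\geq 2$ of the preceding proposition turns the statement into the elementary inequality
\[
k \;\geq\; \frac{(1+m_j)(\hat r_j+2)}{\hat r_j}
\]
required to hold for every fundamental coweight $\hat\xi_j$ of every simple factor of $\hat G$. Since the flag variety $X$, the cone $C^{\hat G}(X)$, and the movable locus all decompose as products over the simple components of $\hat G$, it suffices to check the bound $\leq 5$ for each simple classical Lie algebra separately. A type-by-type calculation then yields the result: in type $A_n$ with $n\geq 2$, $m_j=1$ and $\hat r_j=j(n+1-j)\geq 2$, so the right-hand side is at most $4$; in types $B_n$ and $C_n$ with $n\geq 2$, $m_j\leq 2$ and $\hat r_j\geq 3$, so the bound is at most $5$, with equality attained in the rank-two cases $B_2=C_2$; in type $D_n$ with $n\geq 4$, $m_j\leq 2$ and $\hat r_j\geq 6$, so the bound is again at most $4$.

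The main obstacle is a simple factor of type $A_1=SL_2$, where $\hat r_1=m_1=1$ and the formula degenerates to $k\geq 6$. I expect to deal with this in one of two ways: either read the proposition as covering classical simple factors of rank at least two (and phrase the diagonal $SL_2$ case as a separate statement), or produce an ad hoc argument for $SL_2\hookrightarrow SL_2^{\times 5}$ on $X=(\mathbb{P}^1)^5$. In the latter case one would exhibit directly, using Theorem \ref{Theo KirwanStratFlag}, a strictly dominant polarization whose unstable locus has codimension at least three, so that $C_3^{\hat G}(X)\neq\emptyset$ and Corollary \ref{Cor Sufficient for movable chambers} produces a $\hat G$-movable chamber. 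In either reading the product structure of the ample cone then assembles the simple-factor results into the full statement.
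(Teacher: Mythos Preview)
Your approach is essentially the paper's: derive the bound $k\geq (1+m_j)(\hat r_j+2)/\hat r_j$ from the preceding numerical criterion and verify it type by type. The reduction to simple factors and the computations for $A_{n\geq 2}$, $B_n$, $C_n$, $D_n$ are correct and match what the paper leaves implicit.

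The genuine gap is in your handling of the $A_1$ factor. You are right that the general criterion degenerates to $k\geq 6$ there, but your proposed Option~2 cannot work: for $SL_2\hookrightarrow SL_2^{\times 5}$ one has $C_3^{\hat G}(X)=\emptyset$, so no polarization has unstable locus of codimension at least three. Indeed, for any strictly dominant $\lambda=(a_1,\dots,a_5)$ the three largest weights satisfy $a_{(1)}+a_{(2)}+a_{(3)}>a_{(4)}+a_{(5)}$, so the corresponding diagonal $\{p_{(1)}=p_{(2)}=p_{(3)}\}\subset(\mathbb P^1)^5$ is always an unstable component of codimension exactly~$2$. Invoking Corollary~\ref{Cor Sufficient for movable chambers} via $C_3\ne\emptyset$ is therefore impossible in this case.

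The fix is more direct than either of your options. Take $\lambda=\rho=(1,1,1,1,1)$. The $\hat T$-walls are the hyperplanes $\sum_{i\in S}a_i=\sum_{i\notin S}a_i$; since the total weight is $5$, no wall passes through $\rho$, so $\rho$ lies in a full-dimensional GIT-class. Its unstable locus consists of the $5$-tuples with at least three coincident points, which has codimension~$2$, and every semistable configuration (each multiplicity $\leq 2$) has finite stabilizer and closed orbit in $X^{ss}(\rho)$. Hence $\rho$ already sits in a $\hat G$-movable chamber, and Theorem~III(ii) is verified for $SL_2$ with $k=5$ without passing through $C_3$. This is also in line with the paper's sharper type-$A$ estimate in the subsequent Example, which for $\hat\ell=1$, $k=5$ yields codimension $\geq 2$ rather than $>2$.
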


\begin{example}
Let us consider the case $\hat G=SL_{\hat\ell+1}$, where $m_j=1$ for all $j$. Then the above bound means that there are $\hat G$-movable chambers for $k> 2\frac{\hat\ell+2}{\hat\ell}$. Let us Going back a few steps, we compute, $w\in {^{\xi_j}W}$ we have
$$
w\rho(\xi_j) = \frac12 (\langle \Phi_{w^{-1}}^c \rangle - \langle \Phi_{w^{-1}} \rangle)(\xi_j) = \frac12 (r_j-l(w) - l(w) )=\frac12 r_j - l(w) = \frac{k}{2}\hat r_j - l(w) \;.
$$
Thus $l_{j,\lambda}^+ = \lfloor \frac{r_j-1}{2} \rfloor$.
$$
{\rm codim}_X\hat G X^{us}_{\xi_j}(\rho) \geq r_j-\hat r_j - l_{j,\lambda}^+ = \lceil \frac{r_j+1}{2} \rceil - \hat r_j = \lceil \frac{k\hat r_j+1}{2} \rceil - \hat r_j = \lceil \frac{(k-2)\hat r_j + 1}{2} \rceil \;.
$$
The minimum value over $j=1,...,\hat\ell$ is attained at $j=1$, where $\hat r_j=\hat\ell$ and
$$
{\rm codim}_X X^{us}(\rho) = {\rm codim}_X \hat G X^{us}_{\xi_1}(\rho) \geq \lceil \frac{(k-2)\hat\ell + 1}{2} \rceil \;.
$$
We obtain ${\rm codim}_X X^{us}(\rho) >2$ except in the following cases:
\begin{gather*}
{\rm codim}_X \hat G X^{us}(\rho) = \begin{cases} 1 \;,\; if\quad k=2 \\ 1 \;,\; if\quad k=3 , \hat\ell=1 \\ 2 \;,\; if\quad k=3 , \hat\ell=2,3 \\ 2 \;,\; if\quad k=4 , \hat\ell=1 \end{cases}
\end{gather*}
In particular, in all cases except the above, $\hat G$-movable chambers do exist.
\end{example}

\section{Popov's tree-algorithm}\label{Section Popov}

Here we present an algorithm allowing to determine whether a given $\lambda\in\Lambda^{++}$ belongs to $C^{\hat G}(X)$ or not. Having in mind our description of Kirwan stratification of $X^{us}(\lambda)$, where the non-emptiness of the proposed strata depends on whether certain $W$-translate $w\sigma_\xi\lambda$ belongs to $C^{\hat L'_\xi}(L_\xi x_w)$, this algorithm can be applied to determine the entire stratification. The idea is due to Popov, \cite{Popov-Nullforms}, who developed the method in his study of unstable points in a linear representation space of a reductive group, the classical nullcone of a representation.
There is a common generalization of his and our settings, where $X=G/P$ is a partial flag variety, $\mP(V)=SL(V)/P_1$ in the classical case, with an action of a reductive subgroup $\hat G\subset G$. One particular feature of complete flag varieties, as well as of projective spaces, is that the closed orbits of Levi subgroups $L\subset G$ are of the same type, i.e., complete flag varieties, or, respectively, projective spaces. This is important, since the algorithm uses recursion, whose step refers to Levi subgroups acting on their closed orbits in $X$. This latter fact remains somewhat hidden in the classical case, where one considers linear subspaces of a vector space without necessarily mentioning Levi subgroups of its linear group.

Let $\mk Z_X=\{L_\xi x_w:\xi\in\hat\Gamma,w\in W\}$ denote the set of closed orbits in $X$ of Levi subgroups of $\hat G$ defined by one-parameter subgroups of $\hat T$. Next we define a rooted tree $\mc T_\lambda = T_{\hat G, X,\lambda}$, whose vertices are associated to elements of $Z$ with a natural orientation and signature, which allows to determine, by a recursive algorithm, whether $\lambda$ defines a $\hat G$-ample line bundle on $X$ or not. 

Every rooted tree is endowed with a natural orientation of the edges, pointing from the root to its adjacent vertices, and defined inductively for the rest of the tree.

\begin{defin}
Let $\lambda\in \Lambda^{++}$. We denote
$$
\mk M_X=\mk M_{\hat G, X,\lambda} = \{L_\xi x_{w\sigma_\xi}\in \mk Z_X: \xi=\xi_{L_\xi,w,\sigma_\xi\lambda}\in\hat\Gamma^+\setminus\{0\}, w\in {^\xi W}, l_\xi(w) = r_\xi-\hat r_\xi\} \;.
$$
Analogously, for any $Z=L_\xi x_w\in \mk Z_X$, endowed with the action of $\hat L'_\xi$ and the line bundle given by $w\sigma_\xi\lambda$, we denote
$$
\mk M_Z=\mk M_{\hat L'_\xi, Z,w\sigma_\xi\lambda} \;.
$$
We define a rooted tree $\mc T_\lambda$ with vertices $a_{(Z_j)}$ associated to sequences of nested elements $(Z_j)=(Z_0\supset Z_1\supset\dots\supset Z_p)$ of $\mk Z_X$, starting at $Z_0=X$. and satisfying $Z_{j+1}\in \mk M_{Z_j}$. The root of $\mc T_\lambda$ is $a_{(X)}$. The vertices adjacent to $a_{(X)}$ are $a_{X\subset Z}$ for $Z\in \mk M_X$. The vertices stemming from $a_{X\supset Z_1\supset\dots\supset Z_p\supset Z}$ are, by definition, $a_{X\supset Z_1\supset\dots\supset Z_p\supset Z}$ for $Z\in \mk M_{Z_p}$.

The height of a vertex $a$ is defined as the maximum length of an oriented path in $\mc T_\lambda$ starting at $a$.

A signature on the tree $\mc T_\lambda$ is defined as follows: a vertex $a$ is given a sign ``$-$'' if there exists an arrow in $\mc T_\lambda$ ending at a vertex $b$ with $sign(b)=+$; otherwise, $a$ is given a sign ``$+$''.
\end{defin}

\begin{rem}
(i) The vertices of height $0$, called the leaves, always have sign ``+''.

(ii) If $\hat G$ is abelian, then the tree associated to any $\lambda\in\Lambda^{++}$ consists only of the root, $\mc T_\lambda=\{a_{(X)}\}$. Hence the sign is always ``+'', which corresponds to the fact that $C^{\hat G}(X)=\Lambda^{+}_\R$. 

(iii) The maximal height of a vertex in the tree $\mc T_\lambda$ is the height of the root. It does not exceed ${\rm rank}(\hat G)$, since for chains $X\supset...\supset Z$ of that length, or higher, the semisimple part of the Levi subgroup $\hat L\subset \hat G$ preserving $Z$ is abelian.
\end{rem}

\begin{thm}
Let $\lambda\in \Lambda^{++}$. The line bundle $\mc L_\lambda$ on $X$ is $\hat G$-ample if and only if the root of $\mc T_\lambda$ has sign plus, i.e.,
$$
C^{\hat G}(X)\cap\Lambda^{++} = \{\lambda\in\Lambda^++ : sign(a_{(X)})=+ \;\;{\rm in}\;\;\mc T_\lambda \} \;.
$$
\end{thm}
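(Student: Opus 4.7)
I would prove the theorem by induction on the height of the root of $\mc T_\lambda$, or equivalently on the semisimple rank of the acting group, exploiting the fact that for any $\xi \in \hat\Gamma^+ \setminus \{0\}$ the element $\xi$ is central in its centralizer $\hat L_\xi$, so the semisimple part $\hat L'_\xi$ has strictly smaller rank than $\hat G$ (using that $\hat G$ is semisimple, hence $\xi \notin Z(\hat G)$). The master reformulation is: $\lambda \in C^{\hat G}(X)$ if and only if $X^{us}(\lambda) \subsetneq X$, if and only if no Kirwan--Ness stratum from Theorem \ref{Theo KirwanStratFlag} fills $X$.

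The first substantive step is to identify the candidate top-dimensional strata. Using $\dim X = n_\xi + r_\xi$ and the dimension formula $\dim \hat G P_\xi x_{w\sigma_\xi} = \hat r_\xi + n_\xi + l_\xi(w)$, the stratum attached to $(\xi,w)$ is top-dimensional exactly when $l_\xi(w) = r_\xi - \hat r_\xi$. Combined with the Kirwan--Ness requirement $\xi = \xi_{L_\xi, w, \sigma_\xi\lambda} \in \hat\Gamma^+ \setminus \{0\}$ (the averaging condition of Definition \ref{Def OPSforLeviOrbits XiWlambda}) and $w \in {^\xi W}$, this is precisely the defining condition of $\mk M_X$. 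I would then note that the averaging condition automatically forces $w\sigma_\xi\lambda(\xi) > 0$: the weight $\nu := \nu_{L_\xi, w, \sigma_\xi\lambda}$ is the projection of $0$ onto a convex hull, so $\mu(\xi)$ is bounded below by a positive multiple of $|\nu|^2$ for every $\mu$ in that hull, and in particular for $\mu = \iota^*(w\sigma_\xi\lambda)$ itself.

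Next I would invoke Lemma \ref{Lemma SW-StratifyingPairs}: a candidate pair coming from $Z = L_\xi x_{w\sigma_\xi} \in \mk M_X$ actually produces a nonempty Kirwan--Ness stratum exactly when the restricted weight $w\sigma_\xi\lambda$ lies in $C^{\hat L'_\xi}(Z)$, i.e., is $\hat L'_\xi$-ample on the smaller flag variety $Z$. Thus $\lambda \in C^{\hat G}(X)$ if and only if for every $Z \in \mk M_X$ the weight $w\sigma_\xi\lambda$ fails to be $\hat L'_\xi$-ample on $Z$. I would then close the induction by observing that the subtree of $\mc T_\lambda$ rooted at $a_{(X) \supset Z}$ is canonically identified with the tree $\mc T_{w\sigma_\xi\lambda}$ for the smaller problem $(\hat L'_\xi, Z, w\sigma_\xi\lambda)$, by inspection of the recursive definition. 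By the inductive hypothesis, the root of that subtree carries sign $+$ if and only if $w\sigma_\xi\lambda \in C^{\hat L'_\xi}(Z)$. By the signing rule, $sign(a_{(X)}) = +$ if and only if every child has sign $-$, if and only if no $Z \in \mk M_X$ gives $\hat L'_\xi$-ampleness, if and only if $\lambda \in C^{\hat G}(X)$. The base case $\mk M_X = \emptyset$ is immediate in both formulations: the tree is the single vertex $a_{(X)}$ with sign $+$, and no top-dimensional stratum can arise, so $\lambda$ is $\hat G$-ample.

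The main obstacle is the clean set-up of the recursion: one needs to verify that the subtree at $a_{(X) \supset Z}$ really coincides with the tree of the smaller problem, and in particular that the set of children of $a_{(X) \supset Z}$ in $\mc T_\lambda$ is exactly the image of $\mk M_Z$ under the natural identification. One also needs to confirm that the semisimple rank strictly drops at each step, so the induction is well-founded; this uses that $\xi \in \hat\Gamma^+ \setminus \{0\}$ lies in the center of $\hat L_\xi$ but not in $Z(\hat G)$, whence $\mathrm{rank}(\hat L'_\xi) < \mathrm{rank}(\hat G)$.
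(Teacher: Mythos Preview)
Your proposal is correct and follows essentially the same approach as the paper: induction on the height of the root, identification of the subtree at each child $a_{(X)\supset Z}$ with the tree $\mc T_{\hat L'_\xi,Z,w\sigma_\xi\lambda}$ of the smaller problem, and the key equivalence (via Theorem \ref{Theo KirwanStratFlag} and Lemma \ref{Lemma SW-StratifyingPairs}) that $\lambda\notin C^{\hat G}(X)$ iff some $Z\in\mk M_X$ yields a codimension-zero stratum iff $w\sigma_\xi\lambda\in C^{\hat L'_\xi}(Z)$ for that $Z$. Your write-up is slightly more explicit than the paper's in spelling out why the averaging condition $\xi=\xi_{L_\xi,w,\sigma_\xi\lambda}\ne 0$ forces $w\sigma_\xi\lambda(\xi)>0$ and why the recursion terminates, but the argument is the same.
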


\begin{proof}
We follow the idea of Popov, \cite{Popov-Nullforms}. Let us remark that the branches of $\mc T_\lambda$ are again trees of the same type. More precisely, let $a=a_{(Z_j)}$ be a vertex in $\mc T_\lambda$ and let $Z=Z_p$ be the last variety in the sequence defining $a$. Let $(\xi,w)\in\Xi_\lambda^+\times W$ be the elements associated to $Z,\lambda$ according to the above definition. Then the branch of $\mc T_\lambda$ starting at $a$ is identical with the tree $\mc T_{\hat L'_\xi,Z,w\sigma_\xi\lambda}$. This tree depends only on $Z$ and $\lambda$, but not on the sequence $(Z_j)$ connecting $X$ to $Z$; we shall denote it by $\mc T_\lambda(Z)$.

We shall prove the theorem by induction on the height of the whole tree, i.e., the height of the root. In the above remark, we noticed that the vertices of height $0$ always have sign plus. The height of the root $a_{(X)}$ is $0$ if and only if $\mk M_X=\emptyset$. The latter implies, via Theorem \ref{Theo KirwanStratFlag}, that there are no Kirwan-Ness strata in $X^{us}(\lambda)$ of codimension $0$, hence $\lambda\in C^{\hat G}(X)$. Thus the statement of the theorem holds in the base case. Assume it holds for trees of height one less than the height of $\mc T_\lambda$. The sign of the root $a_{(X)}$ is minus if and only if there is an adjacent vertex $a_{X\supset Z}$ with $Z\in\mk M_X$ and sign plus. This means that the root of $\mc T_{\lambda}(Z)$ has sign plus.
By hypothesis, this is equivalent to $w\sigma_\xi\lambda\in C^{\hat L'_\xi}(Z)$, In such a case $(\xi,w)$ is a stratifying pair for $X_{\hat G}^{us}(\lambda)$ and, since $Z\in \mk M_X$, we have ${\rm codim}_X \hat G P_{\xi}x_{w\sigma_\xi}=0$. This is in turn equivalent to $\lambda\notin C^{\hat G}(X)$.
\end{proof}

\begin{example} It is not hard to show that, for $\hat G$ of rank 1 or 2, a given $\lambda\in\Lambda^{++}$ belongs to $C^{\hat G}(X)$ if and only if $\mc T_\lambda$ does not have branches of length 1. For ${\rm rank}(\hat G)=1$, this means $\mc T_\lambda=\{a_{(X)}\}$.
\end{example}

\section{Mori chambers}\label{Sect Mori chambers}

The goal of this section is to prove theorems \ref{T: Mori-GITchambers} and \ref{T: conesineff} concerning the structure of the effective cone of any 
GIT quotient, $Y$, of $X$ defined by a $\hat G$-movable chamber $C$ in the $\hat G$-ample cone $C^{\hat G}(X)$. By a result of \cite{Seppanen-GlobBranch}, 
such a quotient is a Mori dream space, whose pseudoeffective cone is naturally identified with the $\hat G$-ample cone $C^{\hat G}(X)$. Here we study the 
birational geometry $Y$ and show that the Mori chambers of $\ol{\rm Eff}(Y)$ correspond to the GIT-chambers of $C^{\hat{G}}(X)$. \\

We first recall the notion of Mori equivalence for divisors: two big divisors, $D$ and $D'$, on a projective variety $Y$, with finitely generated section rings 
$R(Y, \mathcal{O}_Y(D))$ and $R(Y', \mathcal{O}_{Y}(D'))$ and natural evaluation maps 
$f_D: Y \dasharrow \rm{Proj}(R(Y, \mathcal{O}_Y(D)))$ and $f_{D'}: Y \dasharrow \rm{Proj}(R(Y, \mathcal{O}_Y(D')))$, are Mori equivalent if there is an isomorphism 
$$\varphi: Y_D:=\rm{Proj}(R(Y, \mathcal{O}_Y(D))) \to \rm{Proj}(R(Y, \mathcal{O}_Y(D')))=:Y_{D'}$$ making the following diagram commute: 
\begin{align*}
  \xymatrix{
 Y \ar@{-->}[dr]^{f_{D'}}\ar@{-->}[r]^{f_D} &  Y_D \ar[d]^{\varphi} \\
  & Y_{D'}
 }
\end{align*}
(cf. \cite{hk}). A Mori chamber in the pseudoeffective cone $\overline{\rm{Eff}}(Y)$ is the closure of a full dimensional Mori equivalence class.\\  

We now assume that $Y=Y_{\lambda_0}=X^{ss}(\lambda_0)//\hat{G}$, with projection morphism 
$$\pi: X^{ss}(\lambda_0)  \to X^{ss}(\lambda_0)//\hat G,$$ is a quotient such that $\lambda_0$ belongs to a $\hat G$-movable 
chamber in $C^{\hat {G}}(X)$.
 
If $\lambda \in C^{\hat G}(X)$ is a strictly dominant weight for which the line bundle $\mathcal{L}_\lambda$ on $X$ descends to a line bundle $L_\lambda$ on $Y$, the section 
ring $R(Y, L_\lambda)$ of $L_\lambda$ is finitely generated, namely $R(Y, L_\lambda) \cong R(X, \mathcal{L}_\lambda)^{\hat G}$, where the latter ring is 
finitely generated by the theorem by Hilbert and Nagata. Evaluating homogeneous elements of $R(Y, L_\lambda)$ in points in $Y$ outside the stable base 
locus $\mathbb{B}(L_\lambda)$ of $L_\lambda$ then yields a rational map 
\begin{align*}
 f_ \lambda: Y \dasharrow Y_\lambda={\rm Proj}(R(Y, L_\lambda)),
 \quad f_\lambda(y):={\rm ker}\,\,{\rm ev}_{y}, \quad y \in Y \setminus \mathbb{B}(L_\lambda),
\end{align*}
where $${\rm ev}_y:=\oplus_{k=1}^ \infty {\rm ev}_{y,k},$$
and
\begin{align*}
{\rm ev}_{y, k}(s):=s(y) \in (L_{\lambda}^k)_{y}/\mathfrak{m}_y(L_\lambda^k)_y, \quad s \in H^0(Y, L_\lambda^k),
\end{align*}
where $\mathfrak{m}_y$ denotes the maximal 
ideal in the stalk $\mathcal{O}_{Y, y}$ of the structure sheaf $\mathcal{O}_Y$ of $Y$. 

\begin{lemma} \label{L: ratmapgit}
 The rational map $f_\lambda$ is induced by GIT, that is, it is the map 
 $$\pi(X^{ss}(\lambda_0) \cap X^{ss}(\lambda)) \to X^{ss}(\lambda)//\hat G=Y_\lambda$$
 induced on quotients by the inclusion $X^{ss}(\lambda_0) \cap X^{ss}(\lambda) \hookrightarrow X^{ss}(\lambda)$.
\end{lemma}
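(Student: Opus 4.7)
My plan is to identify both maps with a single morphism built from the graded ring $R(X,\mathcal{L}_\lambda)^{\hat G}$, using the descent isomorphism together with the $\hat G$-movability hypothesis on $\lambda_0$.

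First I would nail down the section rings. Since the GIT-class of $\lambda_0$ is $\hat G$-movable, $X \setminus X^{ss}(\lambda_0)$ has codimension at least two in the normal projective variety $X$, so Hartogs' extension gives
\[
H^0(X^{ss}(\lambda_0),\mathcal{L}_\lambda^{k})^{\hat G} \;=\; H^0(X,\mathcal{L}_\lambda^{k})^{\hat G}
\]
for every $k$. Combined with the descent identity $\pi^{*} L_\lambda^{k} = \mathcal{L}_\lambda^{k}|_{X^{ss}(\lambda_0)}$ (as $\hat G$-linearized line bundles) and the fact that $\pi$ is a good quotient (so $\pi_{*}^{\hat G}\mathcal{O}_{X^{ss}(\lambda_0)} = \mathcal{O}_Y$), we obtain a canonical graded isomorphism
\[
R(Y,L_\lambda) \;\cong\; R(X,\mathcal{L}_\lambda)^{\hat G},
\]
so $Y_\lambda = \operatorname{Proj} R(Y,L_\lambda) = \operatorname{Proj} R(X,\mathcal{L}_\lambda)^{\hat G} = X^{ss}(\lambda)//\hat G$, i.e.\ the two targets are literally the same scheme.

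Next I would compare the two maps. The GIT quotient morphism $\pi_\lambda \colon X^{ss}(\lambda)\to Y_\lambda$ sends $x$ to the homogeneous ideal of invariants vanishing at $x$. Restricting $\pi_\lambda$ to $X^{ss}(\lambda_0)\cap X^{ss}(\lambda)$ and taking the $\hat G$-invariant functions on $X^{ss}(\lambda_0)$ (which are pulled back from $Y$) factors $\pi_\lambda$ through $\pi$, producing a well-defined morphism
\[
g\colon \pi\bigl(X^{ss}(\lambda_0)\cap X^{ss}(\lambda)\bigr) \longrightarrow Y_\lambda.
\]
On the other hand, for $y\in Y\setminus\mathbb{B}(L_\lambda)$ the rational map $f_\lambda$ sends $y$ to $\ker\operatorname{ev}_y\subset R(Y,L_\lambda)$. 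Take any $x\in \pi^{-1}(y)\subset X^{ss}(\lambda_0)$. Under the isomorphism above a section $s\in H^0(Y,L_\lambda^{k})$ vanishes at $y$ if and only if its corresponding $\hat G$-invariant extension $\tilde{s}\in H^0(X,\mathcal{L}_\lambda^{k})^{\hat G}$ vanishes at $x$; this is just the compatibility of evaluation with the scheme morphism $\pi$ at the point $\pi(x)=y$, together with the fact that $\pi^{*}L_\lambda^{k}\to \mathcal{L}_\lambda^{k}|_{X^{ss}(\lambda_0)}$ is an isomorphism. Hence $\ker\operatorname{ev}_y = \pi_\lambda(x)$, so $f_\lambda(y) = g(y)$ wherever both are defined.

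Finally I would match the domains. A point $y=\pi(x)$, with $x\in X^{ss}(\lambda_0)$, lies outside $\mathbb{B}(L_\lambda)$ iff some section in $\bigoplus_k H^0(Y,L_\lambda^{k})$ is non-vanishing at $y$, iff the corresponding invariant section of some $\mathcal{L}_\lambda^{k}$ is non-vanishing at $x$, iff $x\in X^{ss}(\lambda)$; equivalently, $Y\setminus\mathbb{B}(L_\lambda)=\pi(X^{ss}(\lambda_0)\cap X^{ss}(\lambda))$, which is exactly the domain of $g$. This completes the identification $f_\lambda=g$. The only delicate step is the combination of Hartogs' extension with the descent isomorphism to identify the two section rings; once this is set up, the equality of $f_\lambda$ and the GIT-induced morphism is a formal matching of vanishing ideals of evaluation.
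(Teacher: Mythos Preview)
Your proof is correct and follows the same overall strategy as the paper: identify $Y_\lambda$ with $\mathrm{Proj}\,R(X,\mathcal{L}_\lambda)^{\hat G}$ via the section-ring isomorphism, and then check that $f_\lambda\circ\pi$ agrees with the GIT quotient $\pi_\lambda$ on $X^{ss}(\lambda_0)\cap X^{ss}(\lambda)$.

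The emphasis differs in two places. You make the Hartogs extension step explicit to justify $R(Y,L_\lambda)\cong R(X,\mathcal{L}_\lambda)^{\hat G}$; the paper simply records this isomorphism before the lemma. Conversely, you take the description of $\pi_\lambda(x)$ as the homogeneous ideal of invariants vanishing at $x$ as a standard GIT fact, whereas the paper spends most of its proof establishing precisely this: it introduces the rational map $q_\lambda$ on $\mathrm{Proj}$'s induced by the ring inclusion $R(X,\mathcal{L}_\lambda)^{\hat G}\hookrightarrow R(X,\mathcal{L}_\lambda)$, and then verifies $q_\lambda=\pi_\lambda$ by restricting to the affine opens $X_{(s_i)}$ and invoking uniqueness of good quotients. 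Your pointwise evaluation-ideal comparison is shorter and more conceptual; the paper's affine-local check is more explicit about why the abstractly defined $\mathrm{Proj}$ map really is the Mumford quotient morphism.
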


\begin{proof}
 Since $\mathcal{L}_\lambda$ is very ample, we can write $X$ as $X={\rm Proj}(R(X, \mathcal{L}_\lambda))$. On the other hand, $Y_\lambda$ is given by 
 $Y_\lambda={\rm Proj}(R(X, \mathcal{L}_\lambda)^{\hat G})$. Now, the inclusion 
 $R(X, \mathcal{L}_\lambda)^{\hat G} \hookrightarrow R(X, \mathcal{L}_\lambda)$ yields a rational 
 map of projective spectra $$q_\lambda: {\rm Proj}(R(X, \mathcal{L}_\lambda))\dasharrow {\rm Proj}(R(X, \mathcal{L}_\lambda)^{\hat G}),$$ 
 given on the level of points by 
 \begin{align*}
  q_\lambda(\mathfrak{p}):=\mathfrak{p} \cap R(X, \mathcal{L}_\lambda)^{\hat G}, \quad \mathfrak{p} \in U, 
 \end{align*}
where $U$ is the set of all homogeneous relevant prime ideal for which the homogeneous prime ideals $\mathfrak{p} \cap R(X, \mathcal{L}_\lambda)^{\hat G}$ 
is relevant, i.e., does not contain $H^0(X, \mathcal{L}_\lambda^k)^{\hat G}$ for all positive integers $k$. The closed points of $U$ are then precisely the 
points in the semistable locus $X^{ss}_{\hat G}(\lambda)$. Clearly, $q_\lambda$ is $\hat{G}$-invariant, and we claim that in fact $q_\lambda=\pi_\lambda$. 
Before proving this claim, we show that the claim of the lemma follows from the identity $q_\lambda=\pi_\lambda$. 
Indeed, we can lift $f_\lambda$ to an evaluation map 
$\pi^*f_\lambda: X^{ss}_{\hat G}(\lambda_0) \cap X^{ss}_{\hat G}(\lambda) \to {\rm Proj}(R(X, \mathcal{L}_\lambda)^{\hat G})$ given by  
\begin{align*}
 x \mapsto ({\rm ker} \,\,{\rm ev}_x) \cap R(X, \mathcal{L}_\lambda), \quad x \in X^{ss}_{\hat G}(\lambda_0) \cap X^{ss}_{\hat G}(\lambda).
\end{align*}
If $F_\lambda: X \to {\rm Proj}(R(X, \mathcal{L}_\lambda))$ denotes the natural map
\begin{align*}
 F_\lambda(x):={\rm ker} \,\, {\rm ev}_x, \quad x \in X,
\end{align*}
where the evaluation maps ${\rm ev}_x$, for $x \in X$, are defined as above, but for the line bundle $\mathcal{L}_\lambda$ on $X$,  
the map $\pi^*f_ \lambda$ can be written as the composition 
\begin{align}
\pi^*f_\lambda=q_\lambda \circ F_\lambda \mid_{X^{ss}_{\hat G}(\lambda_0) \cap X^{ss}_{\hat G}(\lambda)}. \label{E: pbmorimap}
\end{align}
Now, since $X \cong {\rm Proj}(R(X, \mathcal{L}_\lambda))$, the morphism $F_\lambda$ providing an isomorphism, we can in fact identify $F_\lambda$ 
with the identity morphism of $X$. Using this identification, the identity \eqref{E: pbmorimap} in fact says that $\pi^*f_\lambda$ is given as the 
composition of the quotient morphism $q_\lambda=\pi_\lambda: X^{ss}_{\hat G}(\lambda) \to X^{ss}_{\hat G}(\lambda)//\hat{G}$ with the 
inclusion of the open subsets $X^{ss}_{\hat G}(\lambda_0) \cap X^{ss}_{\hat G}(\lambda) \hookrightarrow X^{ss}_{\hat G}(\lambda)$, and this is indeed the 
claim of the lemma.

We then conclude the proof by showing that $q_\lambda=\pi_\lambda$. For this, it suffices to show that $q_\lambda$ and $\pi_\lambda$ coincide on open affine subsets 
defining an open affine $\hat{G}$-invariant covering of $U$. Let therefore 
$s_1,\ldots, s_m \in H^0(X, \mathcal{L}_\lambda)^{\hat G}=H^0(Y, L_\lambda)$, for some $m \in \N$, be homogeneous generators of the invariant ring 
$R(X, \mathcal{L}_\lambda)^{\hat G}=R(Y, L_\lambda)$. (By replacing $\mathcal{L}_\lambda$ by a power, if necessary, we may without loss of generality assume that this invariant ring has generators in degree one.) Then, putting 
\begin{align*}
 X_{(s_i)}&:=\{\mathfrak{p} \in {\rm Proj}(R(X, \mathcal{L}_\lambda)): s_i \notin \mathfrak{p}\} \subseteq X,\\
 Y_{(s_i)}&:=\{\mathfrak{p} \in {\rm Proj}(R(Y, L_\lambda)): s_i \notin \mathfrak{p}\} \subseteq Y,
 \end{align*}
 for $i=1,\ldots, m$,
and recalling that these open subsets are affine, namely
\begin{align*}
 X_{(s_i)} \cong {\rm Spec}(R(X, \mathcal{L}_\lambda)_{(s_i)}), \quad Y_{(s_i)} \cong {\rm Spec}(R(Y, L_\lambda)_{(s_i)}),  
\end{align*}
where 
\begin{align*}
 R(X, \mathcal{L}_\lambda)_{(s_i)}&=\left\{\frac{s}{s_i^k} : k \in \N, \quad s \in H^0(X, \mathcal{L}_\lambda^k)\right\}, \\
 R(Y, L_\lambda)_{(s_i)}&=\left\{\frac{s}{s_i^k} : k \in \N, \quad s \in H^0(Y, L_\lambda^k)\right\}  
\end{align*}
are the homogeneous localizations of the respective rings with respect to the degree-one element $s_i$. 

The action of $\hat{G}$ on $R(X, \mathcal{L}_\lambda)$ by graded ring automorphisms induces an action on the homogeneous localization $R(X, \mathcal{L}_\lambda)_{(s_i)}$ 
given by 
\begin{align*}
 g(\frac{s}{s_i^k}):=\frac{g(s)}{s_i^k}, \quad g \in \hat{G}, \quad s \in H^0(X, \mathcal{L}_\lambda^k), \quad k \in \N,
\end{align*}
and for this action we clearly have
\begin{align*}
 (R(X, \mathcal{L}_\lambda)_{(s_i)})^{\hat G}=(R(X, \mathcal{L}_\lambda)^{\hat G})_{(s_i)}=R(Y, L_\lambda)_{(s_i)},
\end{align*}
that is, the operation of taking $\hat{G}$-invariants commutes with homogeneous localization with respect to $s_i$. 
In other words, the inclusion 
\begin{align}
R(X, \mathcal{L}_\lambda)^{\hat G}_{(s_i)} \hookrightarrow R(X, \mathcal{L}_\lambda)_{(s_i)} \label{E: lochilbquot}
\end{align}
of the subring of $\hat{G}$-invariants is given by the 
restriction $q_\lambda\mid_{X_{(s_i)}}: X_{(s_i)} \to Y_{(s_i)}$. Since the embedding of rings \eqref{E: lochilbquot} defines a Hilbert quotient, 
the uniqueness of good quotients implies that it coincides with the restriction of $\pi_\lambda$ to 
$X_{(s_i)}$. This shows that $q_\lambda=\pi_\lambda$.
\end{proof}

\begin{lemma} \label{L: sscodim1}
 Let $\lambda, \lambda' \in C^{\hat{G}}(X)$ be Mori equivalent dominant weights each belonging to some GIT-chamber.
 Then the semistable loci $X^{ss}(\lambda)$ and $X^{ss}(\lambda')$ are equal in codimension one, that is, they coincide outside a 
 closed subset of $X$ of codimension at least two.
\end{lemma}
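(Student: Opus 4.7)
I would argue by contradiction. Assume that $X^{ss}(\lambda)$ and $X^{ss}(\lambda')$ differ in codimension one. Possibly after swapping $\lambda$ and $\lambda'$, there is then an irreducible divisor $D\subseteq X$ with $D\cap X^{ss}(\lambda)\neq \emptyset$ and $D\subseteq X^{us}(\lambda')$. The irreducible closed $\hat{G}$-invariant set $\overline{\hat{G}\cdot D}$ is contained in $X^{us}(\lambda')\subsetneq X$ and has dimension at least $\dim D$, so it must equal $D$; hence $D$ is $\hat{G}$-invariant. Since $\lambda_0$ lies in a $\hat{G}$-movable chamber, $X^{us}(\lambda_0)$ has codimension at least two, so $D^\circ := D\cap X^{ss}(\lambda_0)$ is open dense in $D$ and is a $\hat{G}$-invariant divisor in $X^{ss}(\lambda_0)$. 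Its image under the geometric quotient $\pi: X^{ss}(\lambda_0)\to Y$ is therefore an irreducible divisor $\bar{D}\subseteq Y$.

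Next I would establish that $\bar{D}\subseteq \mathbb{B}(L_{\lambda'})\setminus \mathbb{B}(L_\lambda)$, where $\mathbb{B}$ denotes the stable base locus. After replacing $\mathcal{L}_\lambda$ and $\mathcal{L}_{\lambda'}$ by a sufficiently divisible multiple so that they descend to genuine line bundles on $Y$, one has $H^0(Y,L_\mu^k)\cong H^0(X^{ss}(\lambda_0),\mathcal{L}_\mu^k)^{\hat{G}}\cong H^0(X,\mathcal{L}_\mu^k)^{\hat{G}}$ for $\mu\in\{\lambda,\lambda'\}$, the last identification by Hartogs applied to the codimension-$2$ complement. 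Since $D\subseteq X^{us}(\lambda')$, every $\hat{G}$-invariant section of $\mathcal{L}_{\lambda'}^k$ vanishes on $D$, so its descent vanishes on $\bar{D}$, placing $\bar{D}$ in $\mathbb{B}(L_{\lambda'})$. Conversely, since $D\cap X^{ss}(\lambda)\neq\emptyset$, some invariant section of $\mathcal{L}_\lambda^k$ is nonzero at a point of $D$, and the corresponding section on $Y$ is nonzero at a point of $\bar{D}$, so $\bar{D}\not\subseteq \mathbb{B}(L_\lambda)$.

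The contradiction then comes from Mori equivalence. By Lemma \ref{L: ratmapgit}, both $f_\lambda$ and $f_{\lambda'}$ are the GIT-induced rational maps from $Y$. By assumption there is an isomorphism $\varphi: Y_\lambda\to Y_{\lambda'}$ with $f_{\lambda'}=\varphi\circ f_\lambda$ as rational maps on $Y$. Since $\varphi$ is a morphism defined on all of $Y_\lambda$, and rational maps have an intrinsic maximal domain of definition, the maximal domains of $f_\lambda$ and of $\varphi\circ f_\lambda = f_{\lambda'}$ coincide; these domains are precisely $Y\setminus \mathbb{B}(L_\lambda)$ and $Y\setminus \mathbb{B}(L_{\lambda'})$ respectively, so $\mathbb{B}(L_\lambda)=\mathbb{B}(L_{\lambda'})$, contradicting the previous step. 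The main obstacle is this last identification, that Mori equivalence forces equality of the stable base loci; it is resolved by combining the explicit description of the contraction maps in Lemma \ref{L: ratmapgit} with the intrinsic nature of the maximal domain of definition of a rational map to a projective variety.
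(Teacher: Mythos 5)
Your first two paragraphs are sound and run parallel to the paper's strategy: reducing the statement to a comparison of the stable base loci $\mathbb{B}(L_\lambda)$ and $\mathbb{B}(L_{\lambda'})$ on $Y$ (using that $X^{us}(\lambda_0)$ has codimension $\geq 2$ and that $\pi$ is a geometric quotient) is exactly the right move, and your verification that the offending divisor $D$ is $\hat G$-invariant and descends to a divisor $\bar D$ with $\bar D\subseteq\mathbb{B}(L_{\lambda'})$ and $\bar D\not\subseteq\mathbb{B}(L_\lambda)$ is correct. The problem is the final step. Your contradiction rests on the claim that the maximal domain of definition of the rational map $f_\mu$ is precisely $Y\setminus\mathbb{B}(L_\mu)$. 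This is false in general: the evaluation formula only defines $f_\mu$ on $Y\setminus\mathbb{B}(L_\mu)$, but the rational map may well extend over points of the stable base locus. A standard example: let $Y$ be the blow-up of $\mathbb{P}^2$ at a point, with exceptional divisor $E$ and $H$ the pullback of a line, and take the big divisor $D=H+2E$. Then $H^0(Y,kD)\cong H^0(\mathbb{P}^2,\mathcal{O}(k))$ for all $k$, so $\mathbb{B}(D)=E$, yet $f_D$ is the blow-down morphism $Y\to\mathbb{P}^2$, defined everywhere. So equality of domains of definition of $f_\lambda$ and $f_{\lambda'}=\varphi\circ f_\lambda$ (which does hold, since $\varphi$ is an isomorphism) does not yield $\mathbb{B}(L_\lambda)=\mathbb{B}(L_{\lambda'})$. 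Note also that the equality of stable base loci you aim for is strictly stronger than what the lemma needs and than what the paper proves at this stage (only agreement in codimension one); that should have been a warning sign.

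What is actually needed to close your argument is the key point of the paper's proof: on the $\Q$-factorial variety $Y$, any extension of $f_{\lambda'}$ over the generic point of a divisorial component of $\mathbb{B}(L_{\lambda'})$ must \emph{contract} that component. The paper deduces this from Lemma 7.10 of Debarre's book (resolving $f_{\lambda'}$ by a birational morphism $q:\tilde Y\to Y$ whose exceptional locus has small image, so that the induced map contracts all divisors lying over $\mathbb{B}(L_{\lambda'})$). With that in hand, your setup does give a contradiction, by a slightly different route than you propose: since $\bar D\not\subseteq\mathbb{B}(L_\lambda)$ and $\lambda$ lies in a chamber, $f_\lambda$ is a local isomorphism at the generic point of $\bar D$ (it restricts to an isomorphism $\pi(X^{ss}(\lambda_0)\cap X^{ss}(\lambda))\cong\pi_\lambda(X^{ss}(\lambda_0)\cap X^{ss}(\lambda))$), hence so is $f_{\lambda'}=\varphi\circ f_\lambda$; in particular $f_{\lambda'}$ extends over the generic point of $\bar D$ without contracting it, contradicting the contraction property of divisorial components of $\mathbb{B}(L_{\lambda'})$. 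Equivalently, in the paper's phrasing, the divisorial components of $\mathbb{B}(L_\lambda)\cup\mathbb{B}(L_{\lambda'})$ all lie in ${\rm exc}(f_\lambda)={\rm exc}(f_{\lambda'})\subseteq\mathbb{B}(L_\lambda)\cap\mathbb{B}(L_{\lambda'})$. Without some form of this contraction argument your proof has a genuine gap.
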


\begin{proof}
Let $f_\lambda: Y \dasharrow Y_{\lambda}$ and $f_{\lambda'}: Y \dasharrow Y_{\lambda'}$ be the rational maps defined by the line bundles on $Y$. 
By assumption, there is an isomorphism $\varphi: Y_{\lambda} \to Y_{\lambda'}$ yielding a commutative diagram
\begin{align*}
 \xymatrix{
 Y \ar@{-->}[dr]^{f_{\lambda'}}\ar@{-->}[r]^{f_\lambda} & Y_\lambda \ar[d]^{\varphi} \\
  & Y_{\lambda'},
 }
\end{align*}
from which it follows that ${\rm exc}(f_\lambda)={\rm exc}(f_{\lambda'})$. 
Since $\lambda_0, \lambda, \lambda'$ are all in GIT-chambers, $f_\lambda$ and $f_{\lambda'}$ define isomorphisms 
\begin{align*}
 \pi(X^{ss}(\lambda_0) \cap X^{ss}(\lambda)) 
 \stackrel{f_\lambda}{\cong} \pi_\lambda(X^{ss}(\lambda_0) \cap X^{ss}(\lambda)) \subseteq Y_\lambda,\\
 \pi(X^{ss}(\lambda_0) \cap X^{ss}(\lambda')) 
 \stackrel{f_\lambda'}{\cong} \pi_\lambda(X^{ss}(\lambda_0) \cap X^{ss}(\lambda')) \subseteq Y_\lambda'.    
\end{align*}
Hence, $${\rm exc}(f_\lambda) \subseteq \pi(X^{ss}(\lambda_0) \cap X^{us}(\lambda))=\mathbb{B}(L_\lambda),$$ where $\mathbb{B}(L_\lambda) \subseteq Y$ is the 
stable base locus of the line bundle $L_\lambda$ on $Y$, and, similarly, $${\rm exc}(f_\lambda') \subseteq \pi(X^{ss}(\lambda_0) \cap X^{us}(\lambda'))
=\mathbb{B}(L_{\lambda'}).$$ 

We now claim that any extension $f: Y \dasharrow Y_\lambda$ of the rational map $f_\lambda$ to some open subset $O \subseteq Y$ containing 
$Y \setminus \mathbb{B}(L_\lambda)$ contracts (the intersection with $O$ of) every divisorial component of the stable base locus $\mathbb{B}(L_\lambda)$.  
Indeed, by \cite{deb}(Lemma 7.10) (and its proof), there exists a birational morphism $q: \tilde{Y} \to Y$, defining an isomorphism 
outside $q^{-1}(\mathbb{B}(L_\lambda))$, and a birational morphism 
$\tilde{f}: \tilde{Y} \to Y_\lambda$ with $f_\lambda \circ q=\tilde{f}$, such that $\tilde{f}$ contracts all Cartier divisors with support in $q^{-1}(\mathbb{B}(L_\lambda))$. 
Since $Y$ is a geometric quotient, $Y$ is $\Q$-factorial, and hence $\tilde{f}$ in fact contracts all divisors with support in $q^{-1}(\mathbb{B}(L_\lambda))$ that are 
preimages of divisors in $\mathbb{B}(L_\lambda)$.
%{\bf Why does it follow that $f$ contracts divisorial components? Can $f$ be identified with $\tilde{f}$ on the open set $O$? The argument below seems to answer the question.}

Since $q$ is a birational morphism, and $Y$ is $\Q$-factorial, the image in $Y$ of the exceptional locus ${\rm exc}(q)$ has codimension at least two (cf. \cite[1.40]{deb}), so that 
$\tilde{f}$ can be identified with a rational map $Y \dasharrow Y_\lambda$, defined on the open subset $Y \setminus q({\rm exc}(q))$, and this rational map 
thus also contracts the divisorial components of $\mathbb{B}(L_\lambda)$. Since any birational extension $f: Y \dasharrow Y_\lambda$ of $f_\lambda$ has to 
agree with $\tilde{f}$ on any open subset where both maps are defined, $f$ also 
contracts the divisorial components of $\mathbb{B}(L_\lambda)$. 

The above argument also applies to $f_\lambda'$, and hence we conclude that all divisorial components in $\mathbb{B}(L_\lambda) \cup \mathbb{B}(L_\lambda')$ lie in the 
exceptional locus ${\rm exc}(f_\lambda)={\rm exc}(f_{\lambda'}) \subseteq \mathbb{B}(L_\lambda) \cap \mathbb{B}(L_\lambda')$. 
Hence, $\mathbb{B}(L_\lambda)$ and $\mathbb{B}(L_{\lambda'})$ coincide in codimension one. Since $\pi: X^{ss}(\lambda_0) \to Y$ defines a geometric 
quotient, this implies that the preimages of $\mathbb{B}(L_\lambda)$ and $\mathbb{B}(L_\lambda')$ coincide in codimension one in $X^{ss}(\lambda_0)$, 
i.e., $$X^{ss}(\lambda) \cap X^{ss}(\lambda_0)=X^{ss}(\lambda') \cap X^{ss}(\lambda_0)$$  in codimension one. 
Finally, since the unstable locus  $X^{ss}(\lambda_0)$ is of codimension at least two, it follows that the identity 
$X^{ss}(\lambda)= X^{ss}(\lambda')$ holds in codimension one. 
\end{proof}

\begin{thm}\label{T: Mori-GITchambers}
 Assume that $\lambda_0 \in C^{\hat{G}}(X)$ is a dominant weight belonging to a $\hat G$-movable chamber, and let $Y:= X^{ss}(\lambda_0)//\hat{G}$ be the corresponding
 quotient. Then the identification $C^{\hat{G}}(X) \cong \overline{\rm Eff}(Y)$ of the $\hat{G}$-ample cone of $X$ with the pseudoeffective cone of $Y$ 
 yields an identification of the GIT-chambers in $C^{\hat{G}}(X)$ with the Mori chambers of $\overline{\rm Eff}(Y)$. 
 
 Moreover, every rational contraction $f: Y \dasharrow Y'$, where $Y'$ is a normal projective variety, is induced by GIT, that is, 
 $Y'=Y_\lambda$, and $f=f_\lambda$, for some $\lambda \in C^{\hat G}(X)$.
\end{thm}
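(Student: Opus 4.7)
The plan is to establish the bijection between GIT-chambers and Mori chambers via two reciprocal inclusions, and then deduce the assertion on rational contractions from the Mori dream space structure of $Y$ provided by \cite{Seppanen-GlobBranch}. The first direction is easy: if $\lambda, \lambda'$ lie in the same GIT-chamber, then $X^{ss}(\lambda) = X^{ss}(\lambda')$ and $Y_\lambda = Y_{\lambda'}$ as varieties, and by Lemma \ref{L: ratmapgit} both $f_\lambda$ and $f_{\lambda'}$ are the map induced on quotients by the inclusion $X^{ss}(\lambda_0) \cap X^{ss}(\lambda) \hookrightarrow X^{ss}(\lambda)$, so the two maps literally coincide and $\lambda \sim_{\mathrm{Mori}} \lambda'$ with $\varphi$ the identity. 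In particular, each GIT-chamber is contained in a single Mori chamber.

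For the reverse inclusion, suppose $\lambda \in C$ and $\lambda' \in C'$ are Mori-equivalent representatives of (possibly different) GIT-chambers. I would argue by contradiction that $C \neq C'$ is impossible. Using that Mori chambers in a Mori dream space are convex rational polyhedral cones (cf. \cite{hk}), the segment from $\lambda$ to $\lambda'$ lies inside the Mori chamber containing them, traversing a finite sequence of GIT-chambers $C = C_1, \dots, C_k = C'$ across walls $W_1, \dots, W_{k-1}$. For each adjacent pair $(C_j, C_{j+1})$, the first direction applied to representatives shows Mori equivalence across the wall, so Lemma \ref{L: sscodim1} forces the semistable loci to agree in codimension one at every crossing. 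This rules out divisorial walls and confines the segment to the $\hat G$-movable cone, where each wall crossing corresponds, by the VGIT theory of \cite{Dolga-Hu,Thaddeus-GITflips} combined with our description of walls in Theorem \ref{Theo hatG-chambersandfacets}, to a non-trivial small modification (flip) $Y_{C_j} \dasharrow Y_{C_{j+1}}$. By Lemma \ref{L: ratmapgit} both $f_{\lambda_j}$ and $f_{\lambda_{j+1}}$ are completely pinned down by the respective semistable loci, which differ in codimension two across the flip; consequently no isomorphism $\varphi$ with $\varphi \circ f_{\lambda_j} = f_{\lambda_{j+1}}$ can exist, contradicting Mori equivalence. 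Hence $k = 1$.

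Finally, for the statement about rational contractions, I would use that $Y$ is a Mori dream space: every rational contraction $f: Y \dasharrow Y'$ to a normal projective variety arises, by the Hu-Keel structure theorem, as $f = f_D$ for $D = f^* A$ with $A$ an ample class on $Y'$, and $Y' \cong {\rm Proj}\, R(Y, D)$. By the chamber identification established above, the Mori chamber containing $D$ corresponds to a unique GIT-chamber of $C^{\hat G}(X)$, so taking a dominant representative $\lambda$ of $D$ under the isomorphism ${\rm Pic}(Y)_\R \cong {\rm Pic}(X)_\R$ yields $f = f_\lambda$ and $Y' = Y_\lambda$. Lemma \ref{L: ratmapgit} then identifies $f_\lambda$ with the rational map induced on quotients by the GIT inclusion $X^{ss}(\lambda_0) \cap X^{ss}(\lambda) \hookrightarrow X^{ss}(\lambda)$, so $f$ is induced by VGIT from $X$. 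The main obstacle in this strategy is the second direction: while Lemma \ref{L: sscodim1} efficiently handles divisorial walls, the case of small-modification walls inside the $\hat G$-movable cone requires the explicit VGIT picture from Theorem \ref{Theo hatG-chambersandfacets} together with Lemma \ref{L: ratmapgit} to ensure that the induced flip is genuinely non-trivial relative to the evaluation maps from $Y$.
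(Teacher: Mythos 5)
Your first direction (GIT-equivalent $\Rightarrow$ Mori-equivalent) and the final paragraph on rational contractions via the Hu--Keel characterization match the paper's argument. The problem is the reverse direction, and you have in fact flagged the gap yourself without closing it. Your contradiction argument reduces to the assertion that, across a wall inside the movable cone, the resulting small modification $Y_{C_j}\dasharrow Y_{C_{j+1}}$ cannot be an isomorphism $\varphi$ satisfying $\varphi\circ f_{\lambda_j}=f_{\lambda_{j+1}}$ ``because the semistable loci differ in codimension two.'' But that implication is precisely the content of the theorem in the hard direction: a priori two distinct GIT-chambers could give quotients that are isomorphic compatibly with the evaluation maps from $Y$ (which is exactly what Mori equivalence means), and nothing in Lemma \ref{L: ratmapgit} or Theorem \ref{Theo hatG-chambersandfacets} rules this out. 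Saying the maps are ``pinned down by the semistable loci'' does not show that different semistable loci force non-isomorphic targets over $Y$; it merely restates the question.

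The paper closes this gap by a section-extension argument that your proposal is missing. Given Mori-equivalent $\lambda,\lambda'$ and $x\in X^{ss}(\lambda)$, one takes the ample descent $A'$ of $\mathcal{L}_{\lambda'}$ on $Y_{\lambda'}$, an invariant section $\widetilde{s'}$ with $s'(\varphi(\pi_\lambda(x)))\neq 0$, and pulls back through the commuting triangle to get the line-bundle identity $\pi_\lambda^*\varphi^*A'=\mathcal{L}_{\lambda'}$ on the open set $O=X^{ss}(\lambda_0)\cap X^{ss}(\lambda)\cap X^{ss}(\lambda')$. Lemma \ref{L: sscodim1} guarantees that $O$ has complement of codimension at least two in $X^{ss}(\lambda)$, so the identity of line bundles, and then the section $\pi_\lambda^*\varphi^*s'$, extend to all of $X^{ss}(\lambda)$, where the extension agrees with $\widetilde{s'}$; hence $\widetilde{s'}(x)\neq 0$ and $x\in X^{ss}(\lambda')$. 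This direct inclusion $X^{ss}(\lambda)\subseteq X^{ss}(\lambda')$ (and its symmetric counterpart) is what actually proves that Mori equivalence implies GIT equivalence; no wall-crossing induction or appeal to the flip picture is needed, and without some such extension argument your contradiction does not go through.
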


\begin{proof}
Assuming that the strictly dominant weights $\lambda$ and $\lambda'$ are GIT-equivalent, i.e., $X_{\hat G}^{ss}(\lambda)=X_{\hat G}^{ss}(\lambda')$, let 
$$
\varphi: Y_\lambda=X_{\hat G}^{ss}(\lambda)//\hat G \to X_{\hat G}^{ss}(\lambda')//\hat G=Y_{\lambda'}
$$
be the induced isomorphism of the quotients. The 
Mori equivalence of $f_\lambda$ and $f_{\lambda'}$ via $\varphi$ then follows readily from the GIT-descriptions of the rational 
maps $f_\lambda$ and $f_{\lambda'}$ (Lemma \ref{L: ratmapgit}). 
 
 Assume now that the line bundles $L_\lambda$ and $L_\lambda'$ on $Y$, for $\lambda, \lambda'$ in the interior of $C^{\hat G}(X)$ are 
 Mori equivalent. Then, we have a commuting diagram 
 \begin{align*}
 \xymatrix{
 Y \ar@{-->}[dr]^{f_{\lambda'}}\ar@{-->}[r]^{f_\lambda} & Y_\lambda \ar[d]^{\varphi} \\
  & Y_{\lambda'},
 } 
 \end{align*}
where $\varphi$ is an isomorphism of varieties. In order to show that $\lambda$ and $\lambda'$ are GIT-equivalent, it suffices to show the inclusion 
$X^{ss}_{\hat G}(\lambda) \subseteq X^{ss}(\lambda')$ since the same argument will yield the reverse inclusion.
Let therefore $x \in X^{ss}(\lambda)$, and put $y:=\pi_\lambda(x), y':=\varphi(y)$. 
The description of $Y_{\lambda'}$ as the quotient $Y_{\lambda'}=X^{ss}(\lambda')//\hat G$, and the 
fact that the line bundle $\mathcal{L}_{\lambda}$ on $X$ descends to an ample line bundle $A'$ on $Y_{\lambda'}$ shows that there exists a $\hat G$-invariant section 
$\widetilde{s'} \in H^0(X, \mathcal{L}_{\lambda'})^{\hat G}$ and a section $s' \in H^0(Y_{\lambda'}, A')$ with $\widetilde{s'}\mid_{X^{ss}_{\hat G}(\lambda') }=\pi_{\lambda'}^*s'$, and 
$s'(y') \neq 0$. Moreover, $A:=\varphi^*A'$ is an ample line bundle on $Y_\lambda$, and the commutativity of the above diagram shows that the identity of line bundles 
\begin{align*}
 f_\lambda^*A=f_{\lambda'}^*A'
\end{align*}
holds on the open subset $Y \cap \pi(X^{ss}_{\hat G}(\lambda) \cap X^{ss}_{\hat G}(\lambda'))$ of $V$. 
%(Here we use the pullbacks by rational maps $f_\lambda$ and $f_{\lambda'}$ 
%defined in \cite{hk}, which is a priori only defined on the level of numerical equivalence classes of Cartier divisors. Since $Y$ has a free Picard group, being a subgroup of 
%the Picard group of $X$, the pullbacks even define elements in 
%the Picard group of $Y$.) 
Hence, we have the identity of line bundles 
\begin{align}
 \pi_\lambda^*A=\mathcal{L}_{\lambda'} \label{E: idlbdcodim1} 
\end{align}
on the open subset $O:=X^{ss}_{\hat G}(\lambda_0) \cap X^{ss}_{\hat G}(\lambda) \cap X^{ss}_{\hat G}(\lambda')$ of $X^{ss}_{\hat G}(\lambda)$ (cf. Lemma \ref{L: ratmapgit}). 
Now, by Lemma \ref{L: sscodim1}, $X^{ss}_{\hat G}(\lambda)=X^{ss}_{\hat G}(\lambda')$ in codimension one, so the open subset $O \subseteq X^{ss}_{\hat G}(\lambda)$ has a 
complement of codimension at least two in $X^{ss}_{\hat G}(\lambda)$. Hence, the identity of line bundles \eqref{E: idlbdcodim1} holds on all of $X^{ss}_{\hat G}(\lambda)$. 
In particular, the restriction of the section $\widetilde{s'}$ to $X^{ss}_{\hat G}(\lambda)$ defines a section of $\pi_\lambda^*A$ yielding an extension 
of the section $\pi_\lambda^*\varphi^*s'$ (since they coincide on $X^{ss}_{\hat G}(\lambda_0) \cap X^{ss}_{\hat G}(\lambda) \cap X^{ss}_{\hat G}(\lambda')$). Hence,  
$\widetilde{s'}(x)=(\pi_\lambda^*\varphi^*s')(x)=s(y') \neq 0$, i.e., 
$x \in X^{ss}_{\hat G}(\lambda')$. This shows that $X^{ss}_{\hat G}(\lambda) \subseteq X^{ss}_{\hat G}(\lambda')$, and hence we have proved the first claim 
about the identification of Mori chambers with GIT-chambers. 

Since $Y$ is a Mori dream space, the second part concerning rational contractions follows immediately from the identification of $\overline{{\rm Eff}}(Y)$ with 
$C^{\hat G}(X)$ and the characterization (\cite[Thm. 2.3]{hk}) of rational contractions $f: Y \dasharrow Y'$ onto normal projective varieties $Y'$ as precisely the rational 
contractions $f_D: Y \dasharrow {\rm Proj}(R(Y, \mathcal{O}_Y(D)))$, for effective divisors $D$ on $Y$.
\end{proof}

\begin{thm} \label{T: conesineff}
 The quotient $Y=X^{ss}(C)//\hat G$ is a Mori dream space with $\overline{\rm Eff}(Y)=C^{\hat G}(X)$. This identification 
 of cones, together with the identification of Mori chambers with GIT-chambers, yields an identification of\\
 
 \noindent (i) the nef cone, ${\rm Nef}(Y)$, of $Y$ with the closure $\overline{C}$ of the chamber $C$, \\
 
 \noindent (ii) the movable cone, ${\rm Mov}(Y)$, of $Y$ with the $\hat G$-movable cone ${\rm Mov}^{\hat G}(X)$.
\end{thm}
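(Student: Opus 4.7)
The first claim---that $Y$ is a Mori dream space with a canonical isomorphism $\overline{\rm{Eff}}(Y)\cong C^{\hat G}(X)$---is established in \cite{Seppanen-GlobBranch}, and Theorem \ref{T: Mori-GITchambers} refines this to an identification of GIT-chambers with Mori chambers. It therefore remains to identify $\overline{C}$ with ${\rm Nef}(Y)$ and ${\rm Mov}^{\hat G}(X)$ with ${\rm Mov}(Y)$.

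For (i), my plan is to show that $\overline{C}$ corresponds to the Mori chamber whose associated rational contraction is the identity of $Y$, which for a Mori dream space is precisely ${\rm Nef}(Y)$. If $\lambda$ lies in the interior of $C$, then GIT-equivalence gives $X^{ss}(\lambda)=X^{ss}(\lambda_0)$, so Lemma \ref{L: ratmapgit} identifies $f_\lambda:Y\dashrightarrow Y_\lambda$ with the morphism induced by the identity $X^{ss}(\lambda_0)\hookrightarrow X^{ss}(\lambda)$; thus $f_\lambda$ is an isomorphism and $L_\lambda$ is ample on $Y$. Conversely, if $L_\lambda$ is ample, then $f_\lambda:Y\to Y_\lambda$ is an isomorphism, and Lemma \ref{L: ratmapgit} combined with the geometric quotient property at $\lambda_0$ forces the coincidence $X^{ss}(\lambda)=X^{ss}(\lambda_0)$, i.e., $\lambda\in C$. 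Closing up gives ${\rm Nef}(Y)=\overline{C}$.

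For (ii), recall that ${\rm Mov}(Y)$ is generated by divisor classes $D$ whose stable base locus $\mathbb{B}(D)$ has codimension at least two, and under the Mori-chamber decomposition corresponds to chambers whose rational contractions from $Y$ are isomorphisms in codimension one. If $\lambda$ lies in a $\hat G$-movable chamber, then ${\rm codim}_X X^{us}(\lambda)\ge 2$, and normality together with Hartogs' theorem identifies sections of $L_\lambda^k$ on $Y$ with $\hat G$-invariant sections of $\mathcal{L}_\lambda^k$ on $X$, so $\mathbb{B}(L_\lambda)=\pi_{\lambda_0}(X^{us}(\lambda)\cap X^{ss}(\lambda_0))$. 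Since $\pi_{\lambda_0}$ is a geometric quotient preserving codimensions of $\hat G$-invariant subvarieties and $X^{us}(\lambda_0)$ has codimension at least two, $\mathbb{B}(L_\lambda)$ has codimension at least two in $Y$, hence $L_\lambda$ is movable. Conversely, if $\lambda$ lies in a chamber where $X^{us}(\lambda)$ has an irreducible divisorial component $D$, then $D\cap X^{ss}(\lambda_0)$ remains a divisor (as $X^{us}(\lambda_0)$ is of codimension $\ge 2$), and its image under $\pi_{\lambda_0}$ is a divisorial component of $\mathbb{B}(L_\lambda)$, so $\lambda\notin{\rm Mov}(Y)$.

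The main technical point is the codimension bookkeeping in (ii): one must verify that the divisorial component $D$ of $X^{us}(\lambda)$ survives restriction to $X^{ss}(\lambda_0)$ and projection to $Y$ without being absorbed or collapsed. This follows from the $\hat G$-movability of the chamber containing $\lambda_0$, which guarantees ${\rm codim}_X X^{us}(\lambda_0)\ge 2$, together with the fact that $\pi_{\lambda_0}$ restricted to $X^{ss}(\lambda_0)$ is a geometric quotient with equidimensional orbits, and hence preserves codimension of $\hat G$-stable subvarieties.
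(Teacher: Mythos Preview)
Your proof is correct and follows essentially the same approach as the paper. Both arguments for (ii) rest on the identity $\mathbb{B}(L_\lambda)=\pi(X^{us}(\lambda)\cap X^{ss}(\lambda_0))$ together with the equidimensionality of the fibres of the geometric quotient $\pi$ and the hypothesis ${\rm codim}_X X^{us}(\lambda_0)\ge 2$; for (i) the paper argues in one line that ${\rm Nef}(Y)$, being the closure of a Mori chamber, must be the closure of some GIT-chamber by Theorem~\ref{T: Mori-GITchambers}, and then observes that divisors in $C$ descend to ample divisors on $Y$, forcing that chamber to be $C$---your more explicit converse (``if $L_\lambda$ is ample then $\lambda\in C$'') is fine but would be cleanest stated as: $L_\lambda$ ample $\Rightarrow$ $L_\lambda$ Mori equivalent to $L_{\lambda_0}$ $\Rightarrow$ $\lambda$ GIT-equivalent to $\lambda_0$ by Theorem~\ref{T: Mori-GITchambers}.
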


\begin{proof}
 The nef cone, being the closure of a Mori chamber, corresponds to the closure of some GIT-chamber. Since every integral divisor in the chamber $C$ admits a 
 multiple which descends to an ample divisor on $Y$, the chamber $C$ is the unique chamber corresponding to the nef cone. This proves (i). 
 
 For part (ii), if $D$ is integral divisor on $Y$, let $\pi^*D$ denote the extension to $X$ of the pullback of 
 $D$ by the quotient morphism $\pi: X^{ss}(C)//\hat G \to Y$. The stable base locus, $\mathbb{B}(D)$, of $D$ is then given by 
 \begin{align}
  \mathbb{B}(D)=\pi(X^{us}(D) \cap X^{ss}(C)). \label{E: stablebsloc}
 \end{align}
Since the fibres of $\pi$ all have the dimension $\dim \hat G$, and since the unstable locus $X^{us}(C)$ is of codimension at least two, 
the identity \eqref{E: stablebsloc} shows in particular that $\pi^*D$ is $\hat G$-movable if $D$ is movable.  Hence, ${\rm Mov}(Y) \subseteq {\rm Mov}^{\hat G}(X)$.

Conversely, if $E$ is an integral $\hat G$-movable divisor on $X$, which we can without loss of generality assume to descend to a divisor $\pi_*^{\hat G}(E)$ 
on $Y$, such that $\pi^*\pi_*^{\hat G}(E)=E$, the identity \eqref{E: stablebsloc} applied to $D:=\pi_*^{\hat G}(E)$ shows that $\pi_*^{\hat G}(E)$ is movable. 
Hence we also have the inclusion ${\rm Mov}^{\hat G}(X) \subseteq {\rm Mov}(Y)$. 
\end{proof}

{\small

}

\vspace{0.3cm}

\noindent{\sc Henrik Sepp\"{a}nen, Valdemar V. Tsanov\\

\noindent Mathematisches Institut,
Georg-August-Universit\"at G\"ottingen,\\
Bunsenstra\ss e 3-5, 
D-37073 G\"ottingen,
Germany.}

\noindent{\it Emails:}

\noindent\verb"Henrik.Seppaenen@mathematik.uni-goettingen.de"

\noindent\verb"Valdemar.Tsanov@mathematik.uni-goettingen.de"


\begin{thebibliography}{Bou68}

%\bibitem[A82]{Atiyah} M. F. Atiyah, {\it Convexity and commuting Hamiltonians}, Bull. London Math. Soc. {\bf 14} (1982), 1--15.

\bibitem[BK06]{Belk-Kumar} P. Belkale, S. Kumar, {\it Eigenvalue problem and a new product in cohomology of flag varieties}, Inventiones Math. 166 (2006), 185--228.

\bibitem[BKR12]{Belk-Kumar-Ress} P. Belkale, S. Kumar, N. Ressayre, {\it A generalization of Fulton's conjecture for arbitrary groups}, Math. Ann. {\bf 354} (2012), 401--425.

\bibitem[BS00]{Beren-Sjam} A. Berenstein, R. Sjamaar, {\it Coadjoint orbits, moment polytopes, and the Hilbert-Mumford criterion}, J. of AMS {\bf 13} (2000), 433-–466.

%\bibitem[BZ01]{Beren-Zele-2001} A. Berenstein, A. Zelevinski, {\it Tensor product multiplicities, canonical bases and totally positive varieties}, Invent. math. {\bf 143} (2001), 77–-128.

%\bibitem[Bou68]{Bourbaki-Lie-2} N. Bourbaki, {\it Groupes et alg${\rm\grave{e}}$bres de Lie. Chapitre VI: Syst${\rm\grave{e}}$mes de racines}, Actualit${\rm\acute{e}}$s Scientifiques et Industrielles, no. 1337, Herman, Paris, 1968.

\bibitem[D01]{deb} Debarre, O., {\it Higher-dimensional algebraic geometry}, Universitext, Springer, 2001.

\bibitem[DW11]{Derksen-Weyman-Quiver} H. Derksen, J. Weyman, {\it The combinatorics of quiver representations}, Ann. Inst. Fourier (Grenoble) {\bf 61} (2011), no. 3, 1061-–1131.

\bibitem[DR09a]{Dimitrov-Roth short} I. Dimitrov, M. Roth, {\it Geometric realization of PRV components and the Littlewood-Richardson cone}, Symmetry in mathematics and physics, Contemp. Math. {\bf 490}, 83--95, AMS, Providence, 2009.

\bibitem[DR09b]{Dimitrov-Roth long} I. Dimitrov, M. Roth, {\it Cup products of line bundles on homogeneous varieties and generalized PRV components of multiplicity one}, arXiv:0909.2280v1, 2009.

\bibitem[DH98]{Dolga-Hu} I.V. Dolgachev, Y. Hu, {\it Variation of geometric invariant theory quotients}, Pub. IHES {\bf 78} (1998), 5--56.

%\bibitem[E92]{Elashvili-Eigenmonoid} A. G. Elashvili, {\it Invariant algebras}. In: Lie groups, their discrete subgroups, and invariant theory, Adv. Soviet Math. {\bf 8} (1992), 57-–64.

\bibitem[FMK94]{Mumfordetal-GIT} J. Fogarty, D. Mumford, F. Kirwan, {\it Geometric invariant theory}, 3rd edn, Springer Verlag, New York, 1994.

%\bibitem[FH91]{Flaschka-Haine-1991} H. Flaschka, L. Haine, {\it Torus orbits in $G/P$}, Pacific J. of Math. {\bf 149} (1991), 251--292.

%\bibitem[GS87]{Gelf-Serg-1987} I. M. Gelfand, V. V. Serganova, {\it Combinatorial geometries and torus strata on homogeneous compact manifolds}, Uspehi mat. nauk {\bf 42} (1987), 107--134.

\bibitem[H82]{Heckman-1982} G.J. Heckman, {\it Projections of orbits and asymptotic behaviour of multiplicities for compact connected Lie groups}, Invent. Math. {\bf 67} (1982) 333--356.

%\bibitem[BGH14]{Heinzner-Bili-Ghigi-14} L. Biliotti, A. Ghigi, P. Heinzner, {\it Coadjoint orbitopes}, Osaka J. Math. {\bf 51} (2014), 935-–968.

\bibitem[HK00]{hk} Y. Hu, S. Keel, {\it Mori dream spaces and GIT}, Michigan Math. J. {\bf 48} (2000), 331--348.

%\bibitem[H95]{Hu-ThinCells} Y. Hu, {\it (W-R)-matroids and thin Schubert-type cells attached to algebraic torus actions}, Proc. AMS {\bf 123} (1995), 2607--2617. 

%\bibitem[HMT15]{Hrist-Tsan-Maci} E. Hristova, V. V. Tsanov, T. Maci\c{a}\.{z}ek, {\it Momentum images of representation spaces}, in preparation.

%\bibitem[Kac80]{Kac-nilp-orb} V. Kac, {\it Some remarks on nilpotent orbits}, J. of Algebra {\bf 64} (1980), 190--213.

\bibitem[Kir84]{Kirwan} F. C. Kirwan, {\it Cohomology of quotients in symplectic and algebraic geometry}, Mathematical Notes, Vol. 31, Princeton Univ. Press, 1984.

\bibitem[KKV89]{KKV} F. Knop, H. Kraft, T. Vust, {\it The Picard group of a G-variety}, Algebraische Transformationsgruppen und Invariantentheorie, DMV Sem. {\bf 13}, 77-–87, Birkh\"auser, Basel, 1989.

%\bibitem[KT99]{Knutson-Tao-99} A. Knutson, T. Tao, {\it The honeycomb model of $GL_n(\CC)$ tensor products. I. Proof of the saturation conjecture.} J. Amer. Math. Soc. {\bf 12} (1999), 1055–-1090.

%\bibitem[Kos59]{Kostant-PTDS} B. Kostant, {\it The principal three-dimensional subgroup and the Betti numbers of a complex simple Lie group}, Amer. J. Math. {\bf 81} (1959), 973--1032.

\bibitem[Kos61]{Kostant-LieAlgCoh1} B. Kostant, {\it Lie algebra cohomology and the generalized Borel-Weil theorem}, Ann. of Math. (2) {\bf 74} (1961), 329--387.

%\bibitem[Kos61]{Kostant} B. Kostant, {\it Lie algebra cohomology and the generalized Borel-Weil theorem}, Ann. of Math. (2) {\bf 74} (1961), 329--387.

%\bibitem[Kos63]{Kostant2} B. Kostant, {\it Lie algebra cohomology and the generalized Schubert cells}, Ann. of Math. (2) {\bf 77} (1963), 72--144.

%\bibitem[Kum88]{Kumar-PRV} S. Kumar, {\it Proof of the Parthasarathy-Ranga Rao-Varadarajan conjecture}, Invent. Math. {\bf 93} (1988), 117--130.

%\bibitem[Lan12]{Landsberg-2012-book} J.M. Landsberg, {\it Tensors: Geometry and Applications}, Grad. Stud. in Math., Vol. {\bf128}, AMS 2012.

%\bibitem[LM04]{Lands-Mani-2004-IdealsSecVar} J. M. Landsberg, L. Manivel, {\it On the ideals of secant varieties of Segre varieties}, Found. Comput. Math. (2004), 397–-422.

%\bibitem[LW07]{Lands-Weyman-TangVar} J. M. Landsberg, J. Weyman, {\it On tangential varieties of rational homogeneous varieties}, J. London Math. Soc. {\bf 76} (2007) 513–-530.

%\bibitem[L95]{Littelmann-1995-Paths} P. Littelmann, {\it Paths and root operators in representation theory}, Ann. of Math. {\bf 142} (1995), 499--525.

%\bibitem[M15]{maslovaric} M. Maslovari\'c, {\it Quotients of spectra of almost factorial domains and Mori dream spaces}, preprint, arxiv.org/abs/1508.00539.

%\bibitem[MPR10]{MPR10} P.-L. Montagard, B. Pasquier, N. Ressayre, {\it Two generalizations of the PRV conjecture}, Compos. Math. {\bf 147} (2011), 1321–-1336.

\bibitem[N84]{Ness-StratNullcone} L. Ness, {\it A Stratification of the null cone via the moment map}, Amer. J. of Math. {\bf 106} (1984), 1281--1329.

%\bibitem[PR13]{Pasq-Ress-Satur} B. Pasquier, N. Ressayre, {\it The saturation property for branching rules}, Exp. Math. {\bf 22} (2013), 299-–312. 

\bibitem[P03]{Popov-Nullforms} V.L. Popov, {\it The cone of Hilbert Nullforms}, Proc. Steklov Inst. of Math. {\bf 241} (2003), 177-–194.

\bibitem[R98]{Ress-AppendDolgaHu} N. Ressayre, {\it An example of a thick wall. Appendix to ``Variation of geometric invariant theory quotients'' by Dolgachev and Hu}, Pub. IHES {\bf 87} (1998), 53--56.

\bibitem[R10]{Ressayre-2010-GITandEigen} N. Ressayre, {\it Geometric invariant theory and the generalized eigenvalue problem}, Invent math {\bf 180} (2010), 389-–441.

\bibitem[R12]{Ress-CohFree} N. Ressayre, {\it A cohomology-free description of eigencones in types A, B, and C}, Int. Math. Res. Not. 2012, no. 21, 4966–-5005.

\bibitem[R00]{Ress-GITequiv} N. Ressayre, {\it GIT-equivalence for G-line bundles}, Geometriae Dedicata {\bf 81} (2000), 295--324.

%\bibitem[Res09]{Ressayre-Eigen} N. Ressayre, {\it Eigencones and the PRV conjecture}, arXiv:0910.0697v2, 2009.

%\bibitem[Res10]{Ressayre-SphericMinRank} N. Ressayre, {\it Spherical homogeneous spaces of minimal rank}, Adv. Math. {\bf 224} (2010), no. 5, 17841800.

\bibitem[RR11]{Ress-Rich} N. Ressayre, E. Richmond, {\it Branching Schubert calculus and the Belkale-Kumar product on cohomology}, Proc. AMS {\bf 139} (2011), 835--848.

%\bibitem[SS14]{Seppanen-Schmitz} D. Schmitz, H. Sepp\"anen, {\it Global Okounkov bodies for Bott-Samelson varieties}, arXiv:1409.1857, 2014.

\bibitem[S14]{Seppanen-GlobBranch} H. Sepp\"anen, {\it Global branching laws by global Okounkov bodies}, arXiv:1409.2025, 2014. 

\bibitem[ST15]{Seppa-Tsa-Principal} H. Sepp\"anen, V.V. Tsanov, {\it Geometric invariant theory for principal three-dimensional subgroups acting on flag varieties}, Proceedings of the DFG Priority Programme ``1388 Representation Theory'' conference in Bad Honnef, 2015. arXiv:1503.07105.

%\bibitem[Sj95]{Sjamaar-HolSlices} R. Sjamaar, {\it Holomorphic slices, symplectic reduction and multiplicities of representations}, Ann. of Math. {\bf 141} (1995), 87--129.

\bibitem[T96]{Thaddeus-GITflips} M. Thaddeus, {\it Geometric invariant theory and flips}. J. Amer. Math. Soc. {\bf 9} (1996), 691–-723.

%\bibitem[T13a]{VVT-CohCom} V.V. Tsanov, {\it Embeddings of semisimple complex Lie groups and cohomological components of modules}, J. of Algebra {\bf 373} (2013), 1-–29.

%\bibitem[V78]{Vogan-78-MultiKosta} D.A. Vogan, Jr., {\it Lie algebra cohomology and a multiplicity formula of Kostant} J. of Algebra {\it 51} (1978), 69--75.

%\bibitem[W92]{Wildberger} N. J. Wildberger, {\it The moment map of a Lie group representation}, Trans. AMS {\bf 330} (1992), 257--268.

%\bibitem[Z93]{Zak-Book} F. L. Zak, {\it Tangents and secants of algebraic varieties}, Translations of Mathematical Monographs, Vol. {\bf 127}, AMS 1993.


\end{thebibliography}
\end{document}